\newcommand{\indiq}{{\mathbbm{1}}}
\def\E{{\mathbb E}}
\def\R{{\mathbb R}}
\def\N{{\mathbb N}}
\def\intot{\int_0^t}
\def\dd{{\mathrm d}}
\def\cP{{\mathcal P}}
\def\cI{{\mathcal I}}
\def\cJ{{\mathcal J}}
\def\cL{{\mathcal L}}
\def\cF{{\mathcal F}}
\def\cB{{\mathcal B}}
\def\cS{{\mathcal S}}
\def\cA{{\mathcal A}}
\def\cK{{\mathcal K}}
\def\cC{{\mathcal C}}
\def\cV{{\mathcal V}}
\def\tg{\tilde g}
\def\tV{\tilde V}
\def\e{\varepsilon}
\def\sm{{s-}}
\def\pre{\vartriangleleft}
\def\prel{{\trianglelefteq}}
\def\Pro{\mathbb{P}}
\newtheorem{theo}{Theorem}
\newtheorem{prop}[theo]{Proposition}
\newtheorem{rk}[theo]{Remark}
\newtheorem{lem}[theo]{Lemma}
\newtheorem{defin}[theo]{Definition}
\begin{document}

\title{On a toy network of neurons interacting through their dendrites}
\author{Nicolas Fournier, Etienne Tanr\'e and Romain Veltz}

\address{N. Fournier: LPMA, UMR 7599, UPMC, Case 188, 4 place Jussieu, 75252 Paris Cedex 5, France.}

\email{nicolas.fournier@upmc.fr}

\address{E. Tanr\'e: Universit\'e C\^ote d’Azur, INRIA, 2004 route des Lucioles, BP 93, 06902 Sophia-Antipolis, France.}

\email{Etienne.Tanre@inria.fr}

\address{R. Veltz: Universit\'e C\^ote d’Azur, INRIA, 2004 route des Lucioles, BP 93, 06902 Sophia-Antipolis, France.}

\email{Romain.Veltz@inria.fr}

\subjclass[2010]{60K35, 60J75, 92C20}

\keywords{Mean-field limit, Propagation of chaos, nonlinear stochastic differential equations,
Ulam's problem, Longest increasing subsequence, Biological neural networks.}

\begin{abstract}
Consider a large number $n$ of neurons, each being connected to approximately $N$ other ones, chosen 
at random.
When a neuron spikes, which occurs randomly at some rate depending on its electric potential,
its potential is set to a minimum value $v_{min}$, and this initiates, after a small delay,
two fronts on the (linear) dendrites of all the neurons to which it is connected.
Fronts move at constant speed. When two fronts (on the dendrite of the same neuron) collide, they annihilate.
When a front hits the soma of a neuron, its potential is increased by a small value $w_n$.
Between jumps, the potentials of the neurons are assumed to drift in $[v_{min},\infty)$, 
according to some well-posed ODE.
We prove the existence and uniqueness of a heuristically derived mean-field 
limit of the system when $n,N \to \infty$ with $w_n \simeq N^{-1/2}$. 
We make use of some recent versions of the results of Deuschel and Zeitouni \cite{dz}
concerning the size of the longest increasing subsequence of an i.i.d. collection of points in the plan.
We also study, in a very particular case, a slightly different model where the neurons spike
when their potential reach some maximum value $v_{max}$, and find an explicit formula for the (heuristic) 
mean-field limit.
\end{abstract}

\maketitle

\section{Introduction and motivation}
Our goal is to establish the existence and uniqueness of the heuristically derived mean-field limits
of two closely related toy models of neurons interacting through their dendrites.

\subsection{Description of the particle systems}\label{mm}
We have $n$ neurons, each has a linear dendrite with length $L>0$ that is endowed with a soma at 
one of its two
extremities.
We have some i.i.d. Bernoulli random variables $(\xi_{ij})_{i,j \in \{1,\dots,n\}}$ with parameter 
$p_n \in (0,1)$,
as well as some i.i.d. $[0,L]$-valued random variables $(X_{ij})_{i,j \in \{1,\dots,n\}}$ with probability 
density $H$ on $[0,L]$.
If $\xi_{ij}=1$, then the neuron $i$ influences the neuron $j$, and the \emph{link} is located, 
on the dendrite 
of the $j$-th neuron, at distance $X_{ij}$ of its soma.

We have a \emph{minimum potential} $v_{min} \in \R$, 
an \emph{excitation parameter} $w_n>0$, a regular
\emph{drift} function $F: [v_{min},\infty)\mapsto \R$ such that $F(v_{min})\geq 0$,
a \emph{propagation velocity} $\rho>0$ and a \emph{delay} $\theta \geq 0$.

We denote by $V^{i,n}_t$ the electric potential of the $i$-th neuron at time $t\geq 0$. We assume 
that initially,
the random variables $(V^{i,n}_0)_{i=1,\dots,n}$ 
are i.i.d. with law $f_0 \in \cP([v_{min},\infty))$.

Between jumps (corresponding to \emph{spike} or \emph{excitation} events), 
the membrane potentials of all the neurons satisfy the ODE
$(V^{i,n}_t)'=F(V^{i,n}_t)$. Note that all the membrane potentials remain above $v_{min}$ thanks to 
the condition
$F(v_{min})\geq 0$.

When a neuron \emph{spikes} (say, the neuron $i$, at time $\tau$), its potential is set to $v_{min}$ (i.e.
$V^{i,n}_\tau=v_{min})$ and, for all $j$ such that $\xi_{ij}=1$, two fronts start, after some delay 
$\theta$ (i.e. at time $\tau+\theta$), on the dendrite of the
$j$-th neuron, at distance $X_{ij}$ of the soma.
Both fronts move with constant velocity 
$\rho$, one going down to the soma
(such a front is called \emph{positive front}), the other one going away from the soma (such a front is called 
\emph{negative front}).

On the dendrite of each neuron, we thus have fronts moving with velocity 
 $\rho$. When a negative front reaches the extremity
of the dendrite, it disappears. When a positive front meets a negative front, they both disappear.
When finally a positive front hits the soma (say, of the $j$-th neuron at time $\sigma$), 
the potential of $j$ is increased by $w_n$, i.e. $V^{j,n}_{\sigma}=V^{j,n}_{\sigma-}+w_n$ and the positive front 
disappears.
Such an occurrence is called an \emph{excitation event}.

We assume that at time $0$, there is no front on any dendrite. This is not very natural, but considerably
simplifies the study.

It remains to describe the spiking events, for which we propose two models.

{\bf Soft model.} We have an increasing regular \emph{rate function} 
$\lambda:[v_{min},\infty)\mapsto \R_+$. 
Each neuron (say the $i$-th one) spikes, independently of the others, 
during $[t,t+dt]$, with probability $\lambda(V^{i,n}_t)dt$.

{\bf Hard model.} There is a \emph{maximum electric potential} $v_{max}>v_{min}$. In such a case, we 
naturally assume that $f_0$ is supported in $[v_{min},v_{max}]$.
A neuron spikes each time its potential reaches $v_{max}$.
This can happen for two reasons, either due to the drift (because it continuously drives $V^{i,n}_t$ to
$v_{max}$ at some time $\tau$, i.e. $V^{i,n}_{\tau-}=v_{max}$), or due to an excitation event
(we have $V^{i,n}_{\tau-}<v_{max}$, a positive front hits the soma  of
the $i$-th neuron at time $\tau$ and $V^{i,n}_{\tau}=V^{i,n}_{\tau-}+w_n \geq v_{max}$).

Observe that the hard model can be seen as the soft model with the choice 
$\lambda(v)=\infty \indiq_{\{v\geq v_{max}\}}$. The soft model is thus a way to regularize the spiking events
by randomization. If, as we have in mind, $\lambda$ looks like $\lambda(v)=(\max\{v-v_0,0\}/(v_1-v_0))^p$ 
for some $v_1>v_0>v_{min}$ and some large value of $p\geq 1$, a neuron will never spike when its potential
is in $[v_{min},v_0]$ and, since $\lambda(v)$ is very small for $v<v_1$ and very large for $v>v_1$,
it will spike with high probability each time its potential is close to $v_1$ and only in such a situation.

\subsection{Biological background}
Although the above particle systems are toy models, they are strongly inspired by biology.

\begin{figure}[ht]
\noindent\fbox{\begin{minipage}{0.9\textwidth}
\centerline{\includegraphics[width=0.6\textwidth]{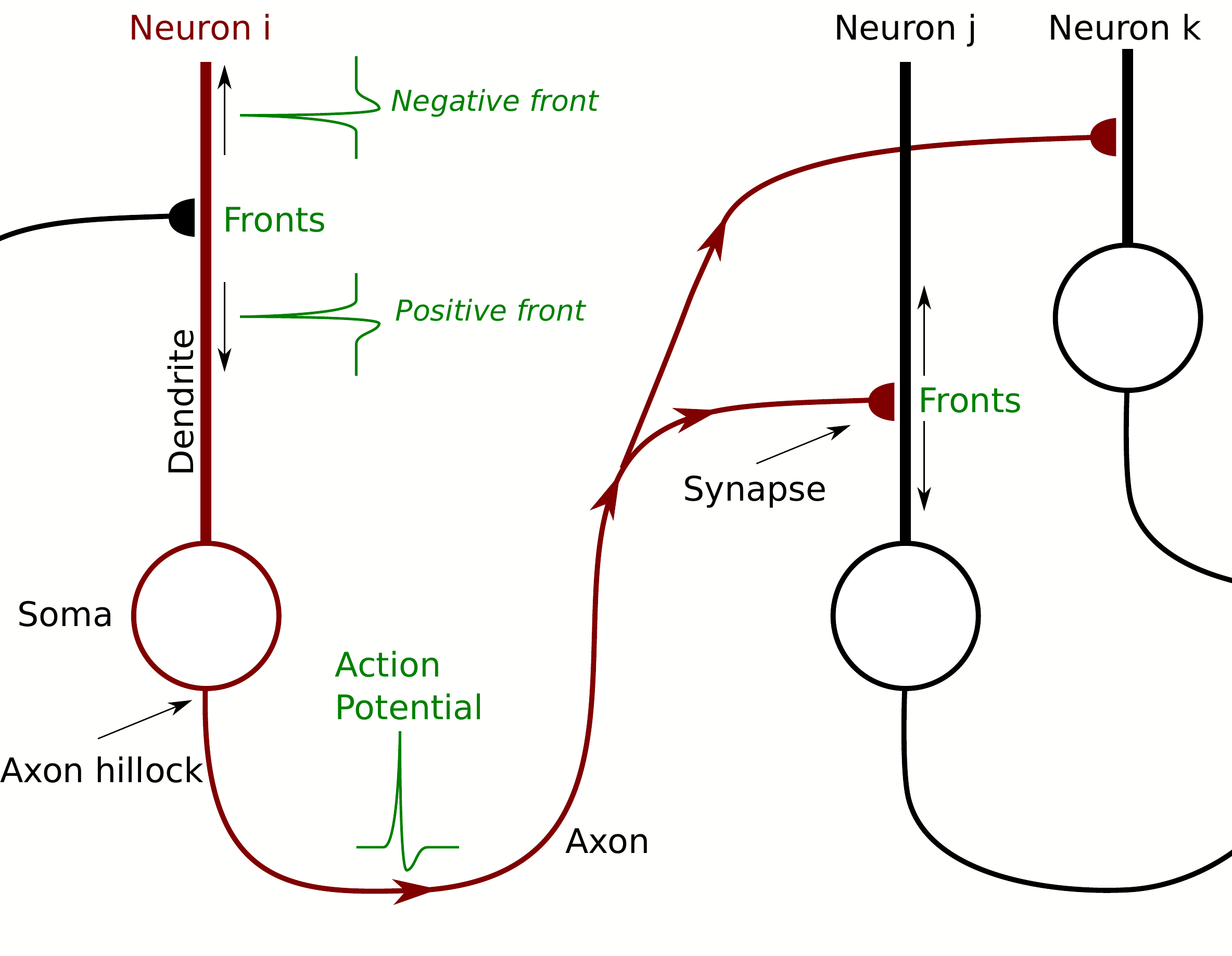}}
\caption{\label{fig:network}
\small{Schematic description of the network organization.}}
\end{minipage}}
\end{figure}

\emph{General organization.} 
A neuron is a specialized cell type of the central nervous system. It is composed of sub-cellular domains which 
serve different functions, see Kandel \cite{k}.  More precisely, the neuron is comprised of a \emph{dendrite}, a 
\emph{soma} (otherwise known as the cell body) and an \emph{axon}. See Figure~\ref{fig:network} for a 
schematic description. 
The neurons are connected with \emph{synapses} which are the interface between the 
axons and the dendrites. On Figure~\ref{fig:network}, 
the axon of the neuron $i$ is connected, through synapses, to the dendrites of the neurons $j$ and $k$.

The neurons transmit information using electrical impulses. 
When the difference of electrical potential across the membrane of the soma of one neuron is high enough,
a sequence of \emph{action potentials} (also called \emph{spikes}) is produced at the beginning of the
axon, at the \emph{axon hillock} and the potential of the somatic membrane is reset to an equilibrium value. 
This sequence of action potentials is then transmitted, without alteration 
(shape or amplitude), to the axon terminals where the excitatory
connections (e.g. synapses) with other (target) neurons are located.
We ignore inhibitory synapses in this work. It takes some time for the 
action potential to reach a synapse and to cross it. The action potential propagates in every 
branch of the axon. When an action potential reaches a synapse, it triggers a local increase of the membrane 
potential of the dendrite of the target neuron. This electrical activity then propagates along the dendrite 
in both ways, (see Figure 3 in Gorski \emph{et al.} \cite{g} for a simulation of this behaviour)
i.e. to the soma and to the other dendrite extremity, interacting with the 
other electrical activity of the dendrite.
The dendritic current reaching the soma increases its potential.

\emph{Generation of spikes.} 
We need to introduce a little bit of biophysics, see \cite{k}. Consider a small patch of cellular
membrane (somatic, dendritic or axonic) which marks the boundary between the extra-cellular space and the 
intracellular one. This piece of membrane contains different ion channel types 
which govern the flow of different ion types through them. 
These ion channels (partly) affect the flow of charges 
locally, and thus the membrane potential. The ion channels rates of opening and closing
depend on the membrane potential $V$ of the small patch under consideration.
Hence the time evolution of $V$ is complicated, one needs to introduce a 
$4$-dimensional ODE system, called the Hodgkin-Huxley equations \cite{hh1}, 
see also Koch \cite{ko}, 
involving other quantities
related to ion channels.
If $V$ is large enough and if there are enough channels, a specific cascade of 
opening/closing of ion channels occurs and this produces a spike.
In the axon, only one sort of spike is possible. For the
dendrite, the situation is more complicated and only some types of neurons have dendrites
that are able to produce spikes.

\emph{Propagation/annihilation of spikes.}
The above description is local in space and we considered that the patch of membrane under consideration 
was isolated. To treat a full membrane, for example a dendrite, 
a nonlinear PDE is generally used, see e.g. Stuart, Spruston and Ha\"user \cite{s} or Koch \cite{ko},
to describe the membrane potential $V(t,x)$ at location $x$ at time $t\geq 0$
(and some other quantities related to the ion 
channels), with some source terms at the positions of the synapses. 
Fronts are particular \emph{localized} solutions of the form $V(t,x)=\psi(x-\rho t)$,
see \cite{ko}. For tubular geometries, a spike
induced in the middle of the membrane
will produce two fronts propagating in opposite directions. In the axon, the 
fronts are
produced only at one extremity (the soma) hence yielding only one propagating front. 

Two fronts propagating in 
opposite directions, in a given dendrite, will cancel out when they collide,
because after the initiation of a spike, 
some ion channels deactivate and switch into a refractory state for a small time. 
Some consequences of this annihilation effect, yet to be
confirmed experimentally, were analyzed in Gorski \emph{et al.} \cite{g}.

Instead of solving a nonlinear PDE for the front propagation/annihilation, 
we consider an abstract 
model which captures the basic phenomena. This enables us to have some formulas
for the number of fronts reaching the soma even when annihilations are 
considered. Note that the same rationale
was used for the axon where we only retained the propagation delay as 
meaningful. Our last approximation 
concerns the dynamics of an isolated neuron, at the soma, between the spikes. We replace the $4$-dimensional ODE system for $V$, 
e.g. the Hodgkin-Huxley equations, by a simpler 
scalar \emph{piecewise deterministic Markov  process} 
where the jumps represent the spiking times and the membrane potential evolves as 
$\dot V=F(V)$ between the spikes.

\emph{The toy model.} 
We are now in position to explain our toy model. Each action potential of an afferent 
neuron produces, after a constant delay $\theta$, two fronts in all the dendrites that are connected to 
the extremities of its axon.
In each dendrite, these fronts propagate and interact (by annihilation), and the ones
reaching the soma increase its membrane potential by a given amount $w_n$.
When the somatic membrane potential is 
high enough, an action potential is created.
Observe that in nature, several action potentials reaching a single synapse are required to produce fronts.

Let us stress one more time that the model we consider is highly schematic. 
Actually, dendrites are not linear segments with constant length, but have a dense branching structure;
dendritic spikes are not the only carriers of information;
inhibition (that we completely neglect) plays an important role;
the delay needed for the information to cross the axon and the synapse is far from being constant;
the spatial structure of interaction is much more complicated than mean-field, etc.
However, it seems this is one of the first attempts to understand the effect of active dendrites
in a neural network.

\subsection{Heuristic scales and relevant quantities}\label{hsrq}
(a) Roughly, each neuron is influenced by $N=np_n$ others and we naturally consider the asymptotic 
$N\to \infty$.

(b) Using a recent version by Calder, Esedoglu and Hero \cite{ceh} 
of some results of Deuschel and Zeitouni \cite{dz}
concerning the length of the longest increasing subsequence in a cloud of i.i.d. points in $[0,1]^2$, we will 
deduce the following result. Consider a single linear dendrite with length $L$, as well
as a Poisson point process $(T_i,X_i)_{i\geq 1}$ on $[0,\infty)\times[0,L]$, with intensity measure
$N g(t) \dd t H(x) \dd x$, $H$ being the repartition density defined in Subsection~\ref{mm} and
$g$ being the spiking rate of one typical neuron in the network. For
each $i\geq 1$, one positive and one negative front start from $X_i$ at time $T_i$. 
Make the fronts move with velocity $\rho>0$, apply the annihilation
rules described in Subsection~\ref{mm} and call $L_N(t)$ the number of excitation events occurring 
during $[0,t]$,
i.e. the number of fronts hitting the soma before $t$. Under a few assumptions on $H$ and $g$, in probability,
$$
\lim_{N\to\infty}\frac{L_N(t)}{\sqrt N} = \Gamma_t(g),
$$
where $\Gamma_t(g)$ is deterministic and more or less explicit, see Definition~\ref{dfG}. 
Of course, $\Gamma_t(g)$ also depends on $H$, but $H$ is fixed in the whole paper so we do not indicate
explicitly this dependence.

(c) We want to consider a regime in which each neuron spikes around once per unit of time.
This implies that on each dendrite, there are around $N$ fronts starting per unit of time. Due to point (b),
even if we are clearly not in a strict Poissonian case, it seems reasonable to think that there will be
around $\sqrt N$ excitation events per unit of time (for each neuron).
Consequently, each neuron will see its potential increased by $w_n\sqrt N$ per unit of time and we 
naturally consider the asymptotic $w_n\sqrt N \to w \in (0,\infty)$.
Smaller values of $w_n$ would make negligible the influence of the excitation events, while 
higher values of $w_n$ would lead to explosion (infinite frequency of spikes).

One could be surprised by this normalization $N^{-1/2}$ (and not $N^{-1}$ for example) which is the right 
scaling for the electric current from the dendrite to the soma to be non trivial as the number of synapses 
goes to infinity. 

\subsection{Goal of the paper}
Of course, the networks presented in Subsection~\ref{mm} are interacting particle systems.
However, the influence of a given neuron (say, the one labeled $2$) on another one 
(say, the one labeled $1$) being small (because the neuron $2$ 
produces only a proportion $1/N\ll 1$
of the fronts influencing the neuron $1$), we expect that some 
asymptotic
independence should hold true. Such a phenomenon is usually called \emph{propagation of chaos}.
Our aim is to prove that, \emph{assuming} propagation of chaos,
as well as some conditions on the parameters of the models,
there is a unique possible reasonable limit process, for each model, in the regime
$N=np_n\to\infty$ and $w_n \sqrt N \to w \in (0,\infty)$. 

The soft model seems both easier and more realistic from a modeling point of view. 
However, we keep the hard model because we are able
to provide, in a very special case, a rather explicit limit, which is moreover in some sense periodic.

\subsection{Informal description of the main result for the soft model}\label{ppppptit}

Consider one given neuron in the system (say, the one labeled $1$), call $V_t^{1,n}$ its potential 
at time $t$ and denote by $J_t^{1,n}$ (resp. $K_t^{1,n}$) 
its number of spike events (resp. excitation events) before time $t$.
We hope that, by a law of large numbers, for $n$ very large, $\kappa^{n}_t:=w_n K_t^{1,n} 
\simeq w N^{-1/2}K_t^{1,n}$
which represents the increase of electric potential before time $t$ due to excitation events, should 
resemble some deterministic quantity $\kappa_t$. The map $t\mapsto \kappa_t$ should be non-decreasing, 
continuous (because $w_n\to 0$, even if this is of course not a rigorous argument) 
and starting from $0$. 
We thus should have
$V_t^{1,n}\simeq V_0^{1,n} + \int_0^t F(V_s^{1,n})ds +  \kappa_t + 
\sum_{s \in [0,t]} (v_{min}-V_{s-}^{1,n})\indiq_{\Delta J_s^{1,n} \neq 0}$, with furthermore $J^{1,n}_t$ jumping at rate
$\lambda(V^{1,n}_t)$.
{\color{black} Moreover,} $\kappa_t$ should also be obtained as the approximate value of $w_n K_t^{1,n}$,
where $K_t^{1,n}$ is the number of excitation events before time $t$, resulting from the 
influence of $N=np_n$ (informally) almost independent neurons, all behaving like the one under study.

We thus formulate the following \emph{nonlinear} problem.
Fix an initial distribution $f_0\in\cP([v_{min},\infty))$ for $V_0$.
Can one find a deterministic non-decreasing continuous function $(\kappa_t)_{t\geq 0}$ starting from $0$ 
such that, if considering the process
$V_t=V_0+\int_0^t F(V_s)ds +  \kappa_t + \sum_{s \in [0,t]} (v_{min}-V_{s-})\indiq_{\Delta J_s \neq 0}$, 
with furthermore the counting process $J_t$ jumping at rate $\lambda(V_t)$
(all this can be properly written using Poisson measures),
if denoting by $(T_k)_{k\geq 1}$ its {jumping times}, 
if considering an i.i.d. family $(X_i)_{i=1,\dots,N}$ with density
$H$ and an i.i.d. family $((T_k^i)_{k\geq 1})_{i=1,\dots,N}$ of copies of $(T_k)_{k\geq 1}$, 
if making start, on a single linear dendrite with length $L$, one positive front and one negative front 
from $X_i$ (for all $i=1,\dots,N$) at each instant $T_k^i+\theta$ (for all $k\geq 1$), 
if making the fronts move with velocity $\rho$, if applying the annihilation 
procedure described in Subsection~\ref{mm} and, if denoting by $K^N_t$ the resulting number of excitation
events occurring during $[0,t]$,
one has $\lim_{N \to \infty}w N^{-1/2}K^N_t=\kappa_t$ for all
$t\geq 0$?

Under a few conditions on $f_0$, $F$, $\lambda$ and $H$, we prove the 
existence of a unique solution $(\kappa_t)_{t\geq 0}$ to the
above problem. Furthermore, the process $(V_t)_{t\geq 0}$ solves a nonlinear Poisson-driven stochastic 
differential equation and $\kappa_t = \Gamma_t((\E[\lambda(V_s)])_{s\geq 0})$.
Our conditions are very general when the delay $\theta$ is positive, and rather restrictive,
at least from a mathematical point of view, when $\theta=0$.

\subsection{Informal description of the main result for the hard model}\label{gggggros}

Similarly to the soft model,
we formulate the following problem.
Fix an initial distribution $f_0\in\cP([v_{min},v_{max}])$ for $V_0$.
Can one find a deterministic non-decreasing continuous function $(\kappa_t)_{t\geq 0}$ starting from $0$ 
such that, if considering the process
$V_t=V_0 + \int_0^t F(V_s)ds +  \kappa_t + (v_{min}-v_{max})J_t$, where $J_t=\sum_{s\leq t} \indiq_{\{V_\sm=v_{max}\}}$,
if denoting by $(T_k)_{k\geq 1}$ its instants of spike, 
if considering an i.i.d. family $(X_i)_{i=1,\dots,N}$ with density
$H$ and an i.i.d. family $((T_k^i)_{k\geq 1})_{i=1,\dots,N}$ of copies of $(T_k)_{k\geq 1}$, 
if making start, on a single linear dendrite with length $L$, one positive front and one negative front 
from $X_i$ (for all $i=1,\dots,N$) at each instant $T_k^i+\theta$ (for all $k\geq 1$), 
if making the fronts move with velocity $\rho$, if applying the annihilation 
procedure described in Subsection~\ref{mm} and, if denoting by $K^N_t$ the resulting number of excitation
events occurring during $[0,t]$,
i.e. the number of fronts hitting the soma before $t$, one has $\lim_{N \to \infty}w N^{-1/2}K^N_t=\kappa_t$ 
for all $t\geq 0$?

As already mentioned, we restrict our study of the hard model
to a special case for which we end up with an explicit formula. 
Namely, we assume that the delay $\theta=0$, that 
the continuous repartition density $H$ attains its maximum at $0$, that
the drift $F$ is constant and positive and that the initial distribution $f_0$ has a regular density 
(on $[v_{min},v_{max}]$ seen as a torus). 
We prove that, there is a unique $C^1$-function $(\kappa_t)_{t\geq 0}$ solving the
above problem. Furthermore, $(\kappa_t)_{t\geq 0}$ is explicit, 
see Theorem~\ref{mr1}.

The function $(\kappa'_t)_{t\geq 0}$ is periodic. Observe that $\kappa'_t$ is proportional to the number of
excitation events (concerning a given neuron) during $[t,t+\dd t]$.
This suggests a synchronization phenomenon, or rather some stability of possible synchronization,
which is rather natural, since two neurons having initially the same potential 
spike simultaneously forever in this model.
Observe that such a periodic behavior cannot precisely
hold true for the particle system (before taking the limit $N\to \infty$)
because the dendrites are assumed to be empty at time $0$, so that some time is needed before some
(periodic) equilibrium is reached.

\subsection{Bibliographical comments}

Kac \cite{ka} introduced the notion of propagation of chaos
as a step toward the mathematical derivation of the Boltzmann equation. Some important steps of the general 
theory were made by McKean \cite{mk} and Sznitman \cite{sz}, see also M\'el\'eard \cite{m}.
The main idea is to approximate the time evolution of one particle, interacting with a large number of 
other particles, by the solution to a nonlinear equation. We mean \emph{nonlinear} in the sense of McKean,
i.e. that the law of the process is involved in its dynamics.
Here, our limit process $(V_t)_{t\geq 0}$ indeed solves a nonlinear stochastic differential equation, at least
concerning the soft model, see Theorem~\ref{mr2}. This nonlinear SDE is very original:
the nonlinearity is given by the functional $\Gamma(g_{(V_s)_{s\geq 0}})$ quickly described in Subsection~\ref{hsrq}, 
arising as a scaling 
limit of the longest subsequence in an i.i.d. cloud of points of which the distribution depends on a function
$g_{(V_s)_{s\geq 0}}$, which depends itself on the law of $(V_s)_{s\geq 0}$.

The problem of computing the length $L_N$ of the longest increasing sequence in a random permutation of
$\{1,\dots,N\}$ was introduced by Ulam \cite{ulam}. Hammersley \cite{h} understood that a clever way to
attack the problem is to note that $L_N$ is also the length of the longest
increasing sequence of a cloud composed of $N$ i.i.d. points uniformly distributed in the square $[0,1]^2$,
for the usual partial order in $\R^2$. He also proved the existence of a constant $c$ such that
$L_N \sim c \sqrt N$ as $N\to \infty$. Versik and Kerov \cite{vk} and Logan and Shepp \cite{ls} 
showed that $c=2$. Simpler proofs and/or stronger results were then found by Bollob\'as and Winkler \cite{bw}, 
Aldous and Diaconis \cite{ad}, Cator and Groeneboom \cite{cg}, etc. Let us also mention the recent work of
Basdevant, Gerin, Gou\'er\'e and Singh \cite{bggs}.

As already mentioned in Subsection~\ref{hsrq}, we use the results of Calder, Esedoglu and Hero \cite{ceh}, 
that generalize those of 
Deuschel and Zeitouni \cite{dz} and that concern the limit behavior of the longest ordered
increasing sequence of a cloud composed of $N$ i.i.d. points with general smooth distribution $g$ 
in the square $[0,1]^2$ (or in a compact domain). These results strongly rely on the fact that since $g$ is 
smooth, it is almost constant on small squares. Hence, on any small square, we can more or less apply
the results of \cite{vk,ls}. Of course, this is technically involved, but the main difficulty in all this work 
was to understand the constant $2$ (note that the value of the corresponding constant is still unknown in 
higher dimension).

Of course, a little work is needed: we cannot apply directly the results of \cite{ceh}, 
because we are not in presence of an i.i.d. cloud. However, as we will see, the situation is rather favorable.

The mean-field theory in networks of spiking neurons has been 
studied in the computational neuroscience community, see 
e.g. Renart, Brunel and Wang \cite{rbw}, Ostojic, Brunel and Hakim \cite{obh}
and the references therein.
A mathematical approach of mean-field effects in neuronal activity has  also
been developed. For instance, in Pakdaman, Thieullen and Wainrib \cite{ptw} and Riedler, Thieullen and 
Wainrib \cite{rtw_2012}, a class of stochastic hybrid systems is rigorously proved
to converge to some fluid limit equations.
In \cite{bft}, Bossy, Faugeras, and Talay prove similar results and 
the propagation of chaos property for networks of 
Hodgkin-Huxley type neurons with an additive white noise perturbation.
The mean-field limits of networks of spiking neurons
modeled by Hawkes processes has been intensively studied recently by 
Chevallier, C\'aceres, Doumic and Reynaud-Bouret \cite{ccdr}, Chevallier \cite{c2017},
Chevallier, Duarte, L\"ocherbach and Ost \cite{cdlo_2017} and Ditlevsen and L\"ocherbach \cite{dl2017}.
Besides, in \cite{l2011,lc2014,lc2016}, Lu\c{c}on and Stannat
obtain asymptotic results for networks of interacting neurons
in random environment.

Finally, we conclude this short bibliography of mathematical mean-field models
in neuroscience by some papers closer to our setting: models of networks of spiking neurons with soft 
(see De Masi {\it et al.} \cite{aaee} and Fournier and L\"ocherbach \cite{fl}) or hard 
(see C{\'a}ceres,  Carrillo and Perthame \cite{ccp}, Carrillo, Perthame, Salort and Smets \cite{cpss},
Delarue, Inglis, Rubenthaler and Tanr\'e \cite{dirt,dirt2} and Inglis and Talay \cite{it}) 
bounds on the membrane potential have also been studied. In particular, in \cite{it} the authors
introduced a model of propagation of membrane potentials along the dendrites but it is very 
different from ours. In particular, it does not model the annihilation of fronts
along the dendrites.

\subsection{Perspectives}
One important question remains open: does propagation of chaos hold true?
This seems very difficult to prove rigorously.
Indeed, the dynamics of the membrane potential  at the  soma depends also on the 
state of its dendrite and on their laws.
Thus, the state space of the dendrite is not a classical \(\mathbb{R}^d\).
Informally, the knowledge of the state of the dendrite is equivalent to knowing the
history of the membrane potential during time interval \([t-L/\rho,t]\). 
Such an intricate dependence is present in many models for which one is able to prove propagation of chaos.
However, in the present case, one would have to extend the results of Deuschel 
and Zeitouni \cite{dz} or Calder, Esedoglu and Hero \cite{ceh} to 
non-independent (although approximately independent)
clouds of random points, in order to understand how many excitation events 
occur for each neuron,
resulting from non-independent \textit{stimuli} creating fronts on its dendrite.
This seems extremely delicate, and we found no notion of \textit{approximate 
	independence} sufficiently strong so that we can extend the results 
of \cite{dz,ceh} but weak enough so that we can apply
it to our particle system.
%
%

\subsection{Plan of the paper}
In the next section, we precisely state our main results.
In Section~\ref{otfa}, we relate deterministically the number of fronts hitting a given soma to 
the length of the longest increasing (for some specific order) subsequence of the points
(time and space) from which these fronts start.
In Section~\ref{central}, which is very technical, we adapt to our context the result of
Calder, Esedoglu and Hero \cite{ceh}.
The proofs of our main results concerning the hard and soft models are handled in
Sections~\ref{ph} and \ref{ps}.
{We informally discuss the existence and uniqueness/non-uniqueness of stationary solutions
for the limit soft model in Section~\ref{id}.}
Finally, we present simulations, in Section~\ref{num}, showing that the 
particle systems described in Subsection~\ref{mm} indeed seem to be well-approached, when $n$ is large, 
by the corresponding limiting processes.

\subsection*{Acknowledgment}
We warmly thank the referees for their fruitful comments.
E. Tanr\'e and R.~Veltz   have received funding from the European Union's Horizon 2020 Framework Programme for Research and Innovation under the Specific Grant Agreement No. 785907 (Human Brain Project SGA2).

\section{Main result}

Here we expose our notation, assumptions and results in details.
The length $L>0$, the speed $\rho>0$ and the minimum potential $v_{min}$ are fixed.

\subsection{The functional $A$}\label{fa}
We first study the number of fronts hitting the soma of a linear dendrite.
We recall that a nonnegative measure $\nu$ on $[0,\infty)\times [0,L]$ is \emph{Radon}
if $\nu(B)<\infty$ for all compact subset $B$ of $[0,\infty)\times [0,L]$.

\begin{defin}\label{A}
We introduce the partial order $\preceq$ on $[0,\infty)\times [0,L]$ defined by 
$(s,x)\preceq (s',x')$ if $|x-x'| \leq \rho(s'-s)$. We say that $(s,x)\prec (s',x')$ if $(s,x)\preceq (s',x')$
and $(s,x)\neq (s',x')$.

For a Radon point measure $\nu=\sum_{i\in I} \delta_{M_i}$,
the set $\cS_\nu=\{M_i:i \in I\}$ consisting of distinct points of $[0,\infty)\times [0,L]$, we
define $A(\nu)\in\N\cup\{\infty\}$ 
as the length of the longest increasing subsequence of $\cS_\nu$.
In other words, 
$A(\nu)=\sup\{k\geq 0 :$ there exist $i_1,\dots,i_k \in I$ such that 
$M_{i_1}\prec\dots\prec M_{i_k}\}$.
For $t\geq 0$, we introduce $D_t=\{(s,x)\in [0,\infty)\times [0,L] : (s,x) \preceq (t,0)\}$
and set $A_t(\nu)=A(\nu|_{D_t})$.
\end{defin}

\noindent Note that $(s,x)\preceq (s',x')$ implies that $s\leq s'$.
The following fact, crucial to our study, is closely linked with Hammersley's lines,
see e.g. Cator and Groeneboom \cite{cg}.

\begin{prop}\label{tac}
Consider a  Radon point measure $\nu=\sum_{i\in I} \delta_{M_i}$,
the set $\cS_\nu=\{M_i=(t_i,x_i):i \in I\}$ consisting of distinct points of $[0,\infty)\times [0,L]$.
Consider a linear dendrite, represented by the segment $[0,L]$, with its soma located at $0$.
For each $i\in I$, make start two fronts from $x_i$ at time $t_i$, one \emph{positive front} going 
toward the soma
and one \emph{negative front} going away from the soma. Assume that all the fronts move with velocity $\rho$.
When two fronts meet, they disappear. When a front reaches one of the extremities of the dendrite, 
it disappears.

We assume that 
\begin{equation}\label{condich}
\hbox{for all $i,j\in I$ with $i\neq j$, $|x_j-x_i|\neq\rho|t_j-t_i|$,}
\end{equation} 
which implies that no front may start precisely from some (space/time) position where there is already a front.
Hence we do not need to prescribe what to do in such a situation.

The number of fronts hitting the soma is given by $A(\nu)$ and
the number of fronts hitting the
soma before time $t$ is given by $A_t(\nu)$.
\end{prop}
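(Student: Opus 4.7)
I would first pass to the rotated coordinates $u=\rho t-x$ and $v=\rho t+x$, sending each event $M_i=(t_i,x_i)$ to a point $(u_i,v_i)$ with $u_i\le v_i$. In these coordinates the positive front becomes the horizontal segment $\{(u,v_i):u_i\le u\le v_i\}$ ending at the soma on the diagonal $u=v$; the negative front becomes the vertical segment $\{(u_i,v):v_i\le v\le u_i+2L\}$; and the partial order $\preceq$ is exactly componentwise inequality. Assumption~\eqref{condich} translates into: all $u_i$ are distinct, and all $v_i$ are distinct. A horizontal ray from $M_i$ and a vertical ray from $M_j$ cross at $(u_j,v_i)$, corresponding to real time $(u_j+v_i)/(2\rho)$, and this crossing lies on both rays precisely when $u_i<u_j$ and $v_j<v_i$. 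Thus $A(\nu)$ is the length of the longest NE-chain in the cloud $\{(u_i,v_i):i\in I\}$, and the task reduces to showing the number of horizontal rays reaching the diagonal equals $A(\nu)$.

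The plan is to compare the dynamics with \emph{patience sorting}. Sort the $n$ points by increasing $u$ and form the sequence $w=(v_{\sigma(1)},\dots,v_{\sigma(n)})$; its longest increasing subsequence has length $A(\nu)$. Run patience sorting on $w$: process entries in order, placing each $v_i$ atop the leftmost pile whose current top strictly exceeds $v_i$, or opening a new pile if none does. This yields $A(\nu)$ final piles and records $n-A(\nu)$ ``cover'' pairs $(M,M')$ with $u_M<u_{M'}$ and $v_M>v_{M'}$. My central claim is that these cover pairs coincide with the annihilation pairs of the front dynamics, so the final pile tops are exactly the $v$-coordinates of the horizontal rays that reach the diagonal. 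Granting this, the number of survivors equals the number of piles, which is $A(\nu)$.

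The hard part will be to verify this identification. I would proceed by induction on the number of points processed in $u$-order, maintaining the invariant that the current pile tops (increasing left-to-right) are precisely the $v$-coordinates of the horizontal rays still alive in the true dynamics restricted to the points processed so far. The key geometric fact is that along any horizontal ray, intersections with vertical rays occur in $u$-order of their bases, and that at the instant of such an intersection, no vertical ray whose base has larger $u$ has yet reached the horizontal's height. Combined with distinctness from~\eqref{condich}, this implies that when a new point $M'$ is processed, its vertical ray's first victim (if any) is the horizontal at smallest $v>v_{M'}$ currently alive --- that is, precisely the leftmost pile top exceeding $v_{M'}$. The symmetric statement --- that the horizontal ray of $M'$ dies in the dynamics precisely when patience sorting places some later-processed entry on top of it --- closes the induction.

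The claim on $A_t(\nu)$ then follows from the first. In rotated coordinates one checks $D_t=\{v\le\rho t\}$; a horizontal ray from $M_i$ reaches the diagonal at real time $v_i/\rho$, so it contributes to a hit before time $t$ iff $M_i\in D_t$. Any vertical ray that could annihilate such a horizontal has base height $v<v_i\le\rho t$ and hence comes from $D_t$; by induction on the depth of the cascade of collisions, every ray whose fate affects a pre-$t$ somatic hit originates in $D_t$. Thus the number of hits before $t$ in $\nu$ equals the total number of hits in $\nu|_{D_t}$, which by the first part equals $A(\nu|_{D_t})=A_t(\nu)$.
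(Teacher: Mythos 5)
Your proof is correct, and it takes a genuinely different route from the paper's. The paper works directly in the original $(t,x)$ coordinates and decomposes $\cS_\nu$ into successive antichains $G_1, G_2,\dots$ of minimal elements, showing by a case analysis that exactly one positive front per layer reaches the soma; the number of layers is then identified with $A(\nu)$ by a standard chain/antichain argument. You instead pass to the rotated coordinates $(u,v)=(\rho t-x,\rho t+x)$, where the order $\prec$ becomes the componentwise order, positive fronts become horizontal segments ending on the diagonal $u=v$, negative fronts become vertical segments, and assumption~\eqref{condich} becomes "all $u_i$ distinct and all $v_i$ distinct." You then identify the annihilation pairs with the cover pairs of patience sorting applied to the $v$-sequence in $u$-order, so that the surviving positive fronts are in bijection with the piles and their count is the LIS length $A(\nu)$. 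This is precisely the "Hammersley's lines" picture the paper alludes to before stating the proposition without developing it. Your approach buys transparency: it reduces the result to a classical and widely known combinatorial fact. The paper's approach buys self-containedness and avoids needing the reader to know patience sorting. Two remarks on completeness of your sketch: (1) the inductive invariant still requires a couple of timing lemmas that you identify but do not carry out --- in particular that if $v_j<v_i$ and $u_i<u_j$ then $u_j<v_j<v_i$, so the collision point $(u_j,v_i)$ lies strictly before the diagonal, ruling out the bad case where a covered horizontal has already hit the soma; and that appending the next point $M_k$ (largest $u$ so far) cannot alter any collision already determined among $M_1,\dots,M_{k-1}$. Both follow easily from the geometry you set up, but they are the crux and deserve to be spelled out. (2) For the claim on $A_t(\nu)$, your cascade argument is sound; it can be stated crisply as in the paper: positive fronts from $D_t$ stay in $D_t$, any front from outside $D_t$ stays outside $D_t$, and a negative front from $D_t$ meets horizontals in increasing $v$-order, so all of its collisions at height $\le\rho t$ are determined before it ever leaves $D_t$. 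Finally, for a Radon $\nu$ with infinitely many atoms, the $u$-values still form a discrete increasing sequence tending to $\infty$ (since $\nu$ restricted to any $[0,T]\times[0,L]$ is finite), so the $u$-ordered induction remains legitimate and yields $A(\nu)=B(\nu)$ in $\N\cup\{\infty\}$.
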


This proposition is proved in Section~\ref{otfa}.
The following observation is obvious by definition (although not completely obvious from the point of view of
fronts).

\begin{rk}\label{order}
Consider two Radon point measures $\nu$ and $\nu'$ on $[0,\infty)\times [0,L]$
such that $\nu \leq \nu'$ (i.e. $\cS_\nu \subset \cS_{\nu'}$).
Then $A(\nu)\leq A(\nu')$
and, for all $t\geq 0$,  $A_t(\nu)\leq A_t(\nu')$.
\end{rk}

\subsection{The functional $\Gamma$}
The role of $\Gamma$ was explained roughly in Subsection~\ref{hsrq},
see Section~\ref{central} for more details. See Deuschel and Zeitouni~\cite{dz} for quite similar 
considerations.

\begin{defin}\label{dfG}
Fix a continuous function $H:[0,L]\mapsto \R_+$.
For $g:[0,\infty)\mapsto \R_+$ measurable and $t\geq 0$, we set
$$
\Gamma_t(g)=\sup_{\beta \in \cB_t} \cI_t(g,\beta) \quad \hbox{where} \quad \cI_t(g,\beta)=
\sqrt{\frac 2\rho} \intot \sqrt{H(\beta(s))
g(s)[\rho^2-(\beta'(s))^2]} \dd s,
$$
$\cB_t$ being the set of $C^1$-functions $\beta:[0,t]\mapsto [0,L]$ 
such that $\beta(t)=0$ and $\sup_{[0,t]} |\beta'(s)|< \rho$.
\end{defin}

It is important, in the above definition, to require $H$ to be continuous. Modifying the value of $H$
at one single point can change the value of $\Gamma_t(g)$.
The following observations are immediate.

\begin{rk}\label{ttip}
(i) Consider $\beta \in \cB_t$. 
The condition that $\sup_{[0,t]} |\beta'(s)| <\rho$ implies that the map $s\mapsto (s,\beta(s))$
is increasing for the order $\prec$. The conditions that $\beta$ is $[0,L]$-valued and that  
$\beta(t)=0$ imply that for all $s\in [0,t]$,
$(s,\beta(s)) \in D_t$.

(ii) If $H(0)=\max_{[0,L]} H$, then $\Gamma_t(g)=\sqrt{2\rho H(0)}\int_0^t\sqrt{g(s)}\dd s$ for all $t\geq 0$.
\end{rk}

Concerning (ii), it suffices to note that one maximizes $\cI_t(g,\beta)$ with the choice $\beta\equiv 0$.

\subsection{The soft model}
We will impose some of the following conditions.

(S1): There are $p\geq 1$ and $C>0$ such that the  initial distribution
$f_0 \in \cP([v_{min},\infty))$ satisfies $\int_{v_{min}}^\infty (v-v_{min})^p f_0(\dd v)<\infty$
and such that the continuous rate function $\lambda:[v_{min},\infty)\mapsto \R_+$ satisfies 
$\lambda(v) \leq  C(1+(v-v_{min}))^p$ for all $v\geq v_{min}$, $C>0$ being a constant.
Also, $\lambda$ vanishes on a neighborhood of $v_{min}$, i.e. 
$\alpha=\inf\{v\geq v_{min}:\lambda(v)>0\} \in (v_{min},\infty)$.
The drift $F:[v_{min},\infty)\mapsto \R$ is locally Lipschitz continuous, satisfies $F(v_{min})\geq 0$
and $F(v) \leq C(1+(v-v_{min}))$ for all $v\geq v_{min}$. 
The repartition density $H$ of the connections is continuous on $[0,L]$.

(S2) The initial distribution $f_0$ is compactly supported, $f_0((\alpha,\infty))>0$,
$F(\alpha)\geq 0$ and $\lambda$ is locally Lipschitz continuous on $[v_{min},\infty)$ and positive and 
non-decreasing on $(\alpha,\infty)$.

\begin{prop}\label{debil2}
Assume (S1).
Consider $r:[0,\infty)\mapsto \R_+$ continuous, non-decreasing and such that $r_0=0$. 
Let $V_0$ be $f_0$-distributed and let $\pi(\dd t,\dd u)$ be a Poisson measure on 
$[0,\infty)\times[0,\infty)$
with intensity $\dd t \dd u$, independent of $V_0$.
Let $\cF_t=\sigma(\{V_0,\pi(A) : A\in\cB([0,t]\times[0,\infty))\})$.
There is a pathwise unique c\`adl\`ag $(\cF_t)_{t\geq 0}$-adapted process $(V_t^r)_{t\geq 0}$ 
solving
\begin{equation}\label{sde}
V^r_t=V_0+\int_0^t F(V_s^r)\dd s + r_t + \intot\int_0^\infty (v_{min}-V^r_\sm)\indiq_{\{u\leq \lambda(V^r_\sm)\}}
\pi(\dd s,\dd u).
\end{equation}
It takes values in $[v_{min},\infty)$ and satisfies  $\E[\sup_{[0,T]} (V_t^r-v_{min})^p]<\infty$ for all $T>0$.
We set $J^r_t=\sum_{s\leq t} \indiq_{\{\Delta V^r_{s}\neq 0\}}=\intot\int_0^\infty 
\indiq_{\{u\leq \lambda(V^r_\sm)\}}\pi(\dd s,\dd u)$. 
\end{prop}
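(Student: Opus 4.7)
\textbf{Proof sketch of Proposition \ref{debil2}.}

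The plan is to build the solution by induction on its jump times. Between two consecutive jumps the equation reduces to the deterministic integral equation $Y_t = y + \int_{T_k}^t F(Y_s)\dd s + (r_t - r_{T_k})$, where $y = V^r_{T_k}$. Since $F$ is locally Lipschitz on $[v_{min},\infty)$ and $r$ is continuous, a standard Picard fixed-point argument on small intervals provides a unique maximal $[v_{min},\infty)$-valued solution (the lower bound $Y\geq v_{min}$ is preserved because $F(v_{min})\geq 0$ and $r$ is non-decreasing). At the next jump time the process is reset to $v_{min}$ by the prescription $V^r_s = V^r_\sm + (v_{min}-V^r_\sm)\indiq_{\{u\leq\lambda(V^r_\sm)\}}$, and we iterate.

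The key a priori bound preventing explosion is purely pathwise. Using $F(v)\leq C(1+(v-v_{min}))$, one has on any inter-jump interval $[T_k,t]$
\begin{equation*}
V^r_t - v_{min} \;\leq\; (V^r_{T_k}-v_{min}) + \int_{T_k}^t C\bigl(1+(V^r_s-v_{min})\bigr)\dd s + (r_t - r_{T_k}),
\end{equation*}
and Gronwall's lemma yields $V^r_t - v_{min} \leq \bigl(V^r_{T_k}-v_{min}+C(t-T_k)+r_t-r_{T_k}\bigr) e^{C(t-T_k)}$. Since $V^r_{T_k}=v_{min}$ for every $k\geq 1$ and only $V^r_0$ may be larger, stringing these estimates together gives the deterministic pathwise bound
\begin{equation*}
\sup_{t\in[0,T]}\bigl(V^r_t - v_{min}\bigr) \;\leq\; \bigl(V_0 - v_{min} + CT + r_T\bigr)\, e^{CT}.
\end{equation*}
Raising this to the power $p$ and taking expectation, using $\int (v-v_{min})^p f_0(\dd v)<\infty$ from (S1), yields the asserted moment bound $\E[\sup_{[0,T]}(V^r_t-v_{min})^p]<\infty$.

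It remains to verify that the jump times $T_k$ defined by this construction tend to infinity, and to settle uniqueness. The growth assumption $\lambda(v)\leq C(1+(v-v_{min}))^p$ from (S1), combined with the pathwise bound above, shows that the instantaneous jump intensity $\lambda(V^r_{s-})$ is a.s. dominated on $[0,T]$ by the finite random variable $C(1+(V_0-v_{min}+CT+r_T)e^{CT})^p$. A standard Poisson thinning/domination argument then shows that the number of jumps of $\pi$ used to define the $T_k$'s on $[0,T]$ is a.s. finite, so $T_k\to\infty$ a.s., and the procedure defines $V^r$ on all of $[0,\infty)$, adapted to $(\cF_t)_{t\geq 0}$. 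Pathwise uniqueness follows from the same induction: two solutions must share the same first jump time (it is measurably determined by the Poisson measure and the pathwise-unique inter-jump ODE), and the uniqueness for the inter-jump equation propagates by induction. I expect the main technical point to be not any single step but the consistent bookkeeping showing that the piecewise construction actually solves \eqref{sde} in its global integral form, which is however routine once one has established that the $T_k$ form a locally finite sequence.
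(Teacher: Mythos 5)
Your proof is correct and follows essentially the same approach as the paper's: the engine in both cases is the same pathwise Gronwall bound $\sup_{[0,T]}(V^r_t-v_{min}) \le (V_0-v_{min}+CT+r_T)e^{CT}$, derived from the one-sided growth bound on $F$ together with the fact that jumps only decrease the potential, followed by raising to the $p$-th power and taking expectations. The only cosmetic difference is that you spell out the piecewise construction of the solution between jump times explicitly, whereas the paper simply extends $F$ and $\lambda$ to the line and invokes standard local well-posedness of the jump SDE; both amount to the same piecewise ODE-plus-reset argument.
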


The process $(V^r_t)_{t\geq 0}$ represents the time evolution of the
potential of one neuron, \emph{assuming} that the excitation resulting from the 
interaction with all the other neurons during $[0,t]$ produces an increase of potential equal to $r_t$,
and $J^r_t$ stands for its number of  spikes during $[0,t]$.
Indeed, between its spike instants, the electric potential $V^r_t$ evolves as $V'=F(V)+r_t'$.
The Poisson integral is precisely designed so that $V^r$ is reset to $v_{min}$
(since $V^r_{s-}+(v_{min}-V^r_\sm)=v_{min}$) at rate $\lambda(V^r_\sm)$.

\begin{prop}\label{prel2}
Assume (S1) and fix $\theta \geq 0$. 
Fix a non-decreasing continuous function $r:[0,\infty)\mapsto \R_+$ with $r_0=0$.
Consider an i.i.d. family $((J^{r,i}_t)_{t\geq 0})_{i\geq 1}$ of copies of $(J^r_t)_{t\geq 0}$. For each $i\geq 1$,
denote by $(T^i_k)_{k\geq 1}$ the jump instants of $(J^{r,i}_t)_{t\geq 0}$, written in the 
chronological order (i.e. $T_k^i < T_{k+1}^i$).
Consider an i.i.d. family $(X_i)_{i\geq 1}$ of random variables with density $H$, independent of the family
$((J^{r,i}_t)_{t\geq 0})_{i\geq 1}$. For $N \geq 1$, let  $\nu^r_N=\sum_{i=1}^N \sum_{k\geq 1} 
\delta_{(T^i_k+\theta,X_i)}$. Then for any $t\geq 0$,
$$
\lim_{N \to \infty} N^{-1/2} A_t(\nu_N^r)=\Gamma_t(h_r^\theta) \hbox{ a.s.,}
$$
where $h_r(t)=\E[\lambda(V^r_t)]$ and $h_r^\theta(t)=h_r(t-\theta)\indiq_{\{t\geq \theta\}}$.
\end{prop}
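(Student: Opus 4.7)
The plan is to apply the variant of the longest-increasing-subsequence asymptotics developed in Section~\ref{central} to the random point measure $\nu_N^r$, which is a superposition of $N$ i.i.d.\ point measures, with $g=h_r^\theta$. The main steps are to compute the intensity of $\nu_N^r$, establish a law of large numbers for its renormalization, and invoke the main theorem of Section~\ref{central}.

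First, I would identify the intensity. Since $(X_i)_{i\geq 1}$ is independent of $((T^i_k)_{k\geq 1})_{i\geq 1}$, both are i.i.d.\ across $i$, and $J^{r,1}$ has intensity $\lambda(V_t^{r,1})\dd t$, one has
\[
\E[\nu_N^r(\dd s,\dd x)] = N\, h_r^\theta(s)\, H(x)\, \dd s\, \dd x.
\]
The functional $\Gamma_t$ of Definition~\ref{dfG} only requires $h_r^\theta$ to be measurable, but I would also check continuity of $h_r$ on $[0,\infty)$: since $V^r$ is c\`adl\`ag and its spike instants form a point process with integrable rate, $\Pro(V^r$ has a jump at $t)=0$ for every fixed $t$, so $V_t^r=V_{t-}^r$ a.s.; combined with the polynomial growth of $\lambda$ from (S1) and the $L^p$ control $\E[\sup_{[0,T]}(V_t^r-v_{min})^p]<\infty$ from Proposition~\ref{debil2}, dominated convergence then yields continuity of $h_r$.

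Next, for each bounded Borel box $B$, $\nu_N^r(B)$ is a sum of $N$ i.i.d.\ random variables of finite variance (the number of spikes of a single neuron on a bounded time window has moments of all orders, again by the $L^p$ bound), so $N^{-1}\nu_N^r(B)\to\int_B h_r^\theta(s)H(x)\dd s\dd x$ a.s. A Chebyshev plus Borel--Cantelli argument upgrades this to an almost sure version uniform over a countable dense class of rectangles in $[0,t]\times[0,L]$. Together with the intensity computation, this is exactly what is needed to invoke the main theorem of Section~\ref{central}, whose output is the almost sure convergence $N^{-1/2}A_t(\mu_N)\to\Gamma_t(g)$ for point measures of the type considered here; applied with $\mu_N=\nu_N^r$ and $g=h_r^\theta$, this finishes the proof.

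The main obstacle, and precisely the reason Section~\ref{central} is needed, is the non-independence of the atoms of $\nu_N^r$: all atoms originating from neuron $i$ share the single abscissa $X_i$, so the cloud is a collection of $N$ i.i.d.\ vertical clusters rather than a genuine i.i.d.\ cloud as in the theorems of Deuschel--Zeitouni and Calder--Esedo\u{g}lu--Hero. The situation is however favorable: since each cluster contributes only $O(1)$ atoms on any bounded time window (by the moment bound on spike counts), for $N$ large the $N$ clusters sit at $N$ essentially distinct locations, and one can sandwich $\nu_N^r$ between Poisson clouds of intensities $N(1\pm\varepsilon)h_r^\theta(s)H(x)\dd s\dd x$. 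The monotonicity of $A_t$ from Remark~\ref{order} and the continuity of $g\mapsto\Gamma_t(g)$ then yield the claim.
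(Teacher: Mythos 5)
Your proposal misidentifies what the key lemma of Section~\ref{central} (Lemma~\ref{lcru}) actually provides, and consequently skips the crucial structural step of the proof.

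Lemma~\ref{lcru} is \emph{not} a statement of the form ``if the empirical intensity $N^{-1}\nu_N^r$ converges to $h_r^\theta(s)H(x)\,\dd s\,\dd x$ then $N^{-1/2}A_t(\nu_N^r)\to\Gamma_t(h_r^\theta)$''. The functional $A_t$ (length of the longest increasing chain) is extremely sensitive to the \emph{dependence structure} of the atoms, not just to the first moment; a law-of-large-numbers control of $\nu_N^r(B)$ for boxes $B$ cannot determine its limit. What Lemma~\ref{lcru} actually requires is a specific \emph{concatenation} structure: a deterministic partition of time into intervals $[b_k,b_{k+1})$, together with, for each $k$, an i.i.d.~(across $i$) family $(T_k^i,X_i)$ whose time marginal has a continuous density $g_k$ on $[b_k,b_{k+1}]$; within each band the cloud is genuinely i.i.d., and correlations are only allowed \emph{across} different $k$'s. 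Your argument never produces this structure; instead you invoke the convergence of empirical measures and a ``Poisson sandwich'', neither of which establishes the hypotheses of the lemma. In fact the paper explicitly emphasizes (in the Perspectives) that extending the Deuschel--Zeitouni/Calder--Esedo\u{g}lu--Hero results to merely ``approximately independent'' clouds is precisely what they could not do; if intensity control were enough, the whole propagation-of-chaos problem would be easy.

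The missing ingredient is the refractory-period argument. Assumption (S1) requires $\lambda$ to vanish on a neighborhood $[v_{min},\alpha]$ of $v_{min}$, and the paper uses this (its Step~2) to construct deterministic $a_0=0<a_1<a_2<\dots\to\infty$ such that almost surely each neuron spikes \emph{at most once} on each interval $[a_k,a_{k+1})$. This is what converts the collection of all spike times of a single neuron (a strongly self-correlated sequence) into a family of single random variables $S_k=\inf\{t\ge a_k: \Delta J^r_t=1\}$, one per band, whose law is computed to have a continuous density $g_k$ (Step~3). One then checks $\sum_k g_k(t)\indiq_{\{t\in[a_k,a_{k+1})\}}=\E[\lambda(V^r_t)]$ a.e. (Step~4), and only then does Lemma~\ref{lcru}, with $b_k=a_k+\theta$, apply directly (Step~5). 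Without identifying these refractory intervals and the densities $g_k$, there is no i.i.d.~concatenation to feed into the lemma, and the ``sandwich between Poisson clouds'' is not a coupling you can actually construct: the spike times of a given neuron are neither Poisson nor stochastically dominated by a Poisson process in a useful $\leq$-sense for the order $\prec$, and even if they were, the CEH result applies to i.i.d.~clouds, not to arbitrary point processes with the right intensity. Your observation that $h_r$ is continuous (via a.s.\ no fixed jump times and dominated convergence) is correct and reassuring, but it is a small side remark, not the engine of the proof.
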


Let us explain this result. If we have $N$ independent neurons of 
which the electric potentials evolve as $(V^r_t)_{t\geq 0}$, of which $(J^r_t)_{t\geq 0}$ counts the 
number of spikes, if all these spikes make start some fronts (after a delay $\theta)$
on the dendrite of another neuron and 
that these fronts
evolve and annihilate as described in Proposition~\ref{tac}, then the number of fronts hitting the soma 
of the neuron under consideration between $0$ and $t$ 
equals $A_t(\nu_N^r)$. If each of these excitation events makes increase
the potential of the neuron by $w_N=w N^{-1/2}$ (with $w>0$), then, at the limit, the electric potential of 
the neuron will be increased, due to excitation, by $w \Gamma_t(h_r^\theta)$ during $[0,t]$.

\begin{theo}\label{mr2}
Assume (S1) and fix $w>0$ and $\theta \geq 0$.

(i) A non-decreasing continuous 
$\kappa:[0,\infty)\mapsto \R_+$ such that $\kappa_0=0$ solves $w\Gamma_t(h_\kappa^\theta)=\kappa_t$ 
for all $t\geq 0$
if and only if $\kappa=w \Gamma((\E[\lambda(V_{s-\theta})]\indiq_{\{s\geq \theta\}})_{s\geq 0})$, 
for some $[v_{min},\infty)$-valued c\`adl\`ag $(\cF_t)_{t\geq 0}$-adapted
solution $(V_t)_{t\geq 0}$ to the nonlinear SDE
(here $V_0$, $\pi$ and $(\cF_t)_{t\geq 0}$ are as in Proposition~\ref{debil2})
\begin{equation}\label{nsde}
V_t \!=\!V_0+\int_0^t F(V_s) \dd s + w \Gamma_t((\E[\lambda(V_{s-\theta})]\indiq_{\{s\geq \theta\}})_{s\geq 0})
+ \intot\int_0^\infty (v_{min}-V_\sm)\indiq_{\{u\leq \lambda(V_\sm)\}}\pi(\dd s,\dd u)
\end{equation}
satisfying $\E[\sup_{[0,T]} (V_t-v_{min})^p]<\infty$ for all $T>0$.

(ii) Assume either that $\theta>0$ or (S2).
Then there exists a unique $[v_{min},\infty)$-valued c\`adl\`ag $(\cF_t)_{t\geq 0}$-adapted
solution $(V_t)_{t\geq 0}$ to \eqref{nsde}
such that for all $T>0$, $\E[\sup_{[0,T]} (V_t-v_{min})^p]<\infty$.
\end{theo}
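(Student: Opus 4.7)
Part (i) is essentially a matter of unpacking definitions. Given a candidate $\kappa$, Proposition~\ref{debil2} produces $V^\kappa$ solving \eqref{sde} with $r=\kappa$, so that $h_\kappa(t)=\E[\lambda(V^\kappa_t)]$; the condition $\kappa_t = w\Gamma_t(h_\kappa^\theta)$ then says exactly that $V^\kappa$ satisfies \eqref{nsde}. Conversely, from any solution $(V_t)$ of \eqref{nsde} we read off $\kappa_t:=w\Gamma_t((\E[\lambda(V_{s-\theta})]\indiq_{\{s\geq\theta\}})_{s\geq 0})$; elementary properties of $\Gamma$ from Definition~\ref{dfG} make $\kappa$ non-decreasing, continuous and zero at $0$, and the pathwise uniqueness of Proposition~\ref{debil2} forces $V=V^\kappa$, giving the fixed-point identity.

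For part (ii), the task is to show that the map $\Phi(\kappa)_t := w\Gamma_t(h_\kappa^\theta)$ has a unique fixed point on the class $\cK$ of continuous non-decreasing $\kappa:[0,\infty)\to\R_+$ with $\kappa_0=0$. Two ingredients will be used throughout. First, a stability estimate for the SDE \eqref{sde}: applying Gr\"onwall to the difference $V^{\kappa_1}-V^{\kappa_2}$, combined with the local Lipschitz continuity of $\lambda$ and the $L^p$ bound of Proposition~\ref{debil2}, yields $\|h_{\kappa_1}-h_{\kappa_2}\|_{L^\infty[0,t]}\leq C_t\|\kappa_1-\kappa_2\|_{L^\infty[0,t]}$ on bounded subsets of $\cK$. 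Second, elementary bounds on the variational functional: $g\mapsto\Gamma_t(g)$ is monotone, and the pointwise inequality $|\sqrt a-\sqrt b|\leq |a-b|/(2\sqrt c)$ applied inside $\cI_t(g,\beta)$ shows that $|\Gamma_t(g_1)-\Gamma_t(g_2)|\leq C_{t,c}\|g_1-g_2\|_{L^\infty[0,t]}$ as soon as $g_1,g_2\geq c>0$.

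When $\theta>0$ the delay decouples the problem. Since $h_\kappa^\theta$ vanishes on $[0,\theta]$ and, for $s\geq\theta$, $h_\kappa(s-\theta)$ depends only on $\kappa|_{[0,s-\theta]}$, the value $\Phi(\kappa)_t$ depends only on $\kappa|_{[0,(t-\theta)\vee 0]}$. We therefore construct $\kappa$ inductively on the intervals $[n\theta,(n+1)\theta]$: set $\kappa\equiv 0$ on $[0,\theta]$ and, at step $n$, define $\kappa_t:=w\Gamma_t(h_\kappa^\theta)$ for $t\in[n\theta,(n+1)\theta]$ from the already-built restriction of $\kappa$ to $[0,n\theta]$. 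Uniqueness follows by the same induction, with no contraction argument needed.

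The case $\theta=0$ under (S2) is where the real obstacle lies. Because $\Gamma_t$ involves $\sqrt{g(s)}$ and is thus only $1/2$-H\"older in $g$ a priori, no direct contraction on $\Phi$ can be expected. The purpose of (S2) is to secure a uniform lower bound $\inf_{s\in[0,T]}h_\kappa(s)\geq c_T>0$ on every bounded subclass of $\cK$: since $f_0((\alpha,\infty))>0$ and $\lambda>0$ on $(\alpha,\infty)$ we have $h_\kappa(0)>0$, and the conditions $F(\alpha)\geq 0$, $\lambda$ non-decreasing on $(\alpha,\infty)$, together with $\kappa$ itself non-decreasing, allow us to propagate $\Pro(V^\kappa_s>\alpha+\delta)\geq c'>0$ on $[0,T]$ by comparing $V^\kappa$ to the deterministic ODE $v'=F(v)+\kappa'$ started above $\alpha$ on the event of no spike before time $s$ (this event has positive probability thanks to the sublinear growth of $\lambda$ and the compact support of $f_0$). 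Once this positivity is in hand, the Lipschitz estimate on $\Gamma_t$ upgrades $\Phi$ to a genuine contraction on short time intervals, and existence and uniqueness on $[0,t_0]$ follow from Banach's fixed-point theorem; since $t_0$ depends only on $T$ and on a priori moment bounds, iteration yields a unique solution on $[0,\infty)$. We expect the propagation of this uniform positivity of $h_\kappa$ to be the most delicate piece of the proof.
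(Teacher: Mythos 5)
Your treatment of part (i) and of part (ii) when $\theta>0$ agrees with the paper: unwrapping the definitions via Proposition~\ref{debil2} for (i), and using the delay to build the solution inductively on $[k\theta,(k+1)\theta]$ for (ii). Your uniqueness argument when $\theta=0$ under (S2) is also essentially the paper's (compact support gives an a priori $L^\infty$ bound, then $\Omega_T=\{V_0>\alpha,\ \pi([0,T]\times[0,K])=0\}$ secures $\inf_{[0,T]}\E[\lambda(V_t)]>0$, and a Gr\"onwall estimate in $L^1$-in-time closes the loop), though for that $\Pro(\Omega_T)>0$ step what is used is continuity (local boundedness) of $\lambda$, not ``sublinear growth'' — $\lambda$ may grow like $v^p$ with any $p\geq 1$.

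The real gap is in your existence argument for $\theta=0$. Invoking Banach's fixed point for $\Phi(\kappa)_t=w\Gamma_t(h_\kappa^\theta)$ requires (a) identifying a complete metric space that $\Phi$ maps into itself, and (b) a priori $L^\infty$ bounds on $\sup_{[0,T]}V^\kappa_t$ uniform over all $\kappa$ encountered in the iteration, which you take for granted. A naive Gr\"onwall bound gives $\sup_{[0,T]}V^\kappa_t\leq(\|V_0\|_\infty+\|\kappa\|_\infty+CT)e^{CT}$, and feeding this into $\Phi(\kappa)_T\leq w\sqrt{2\rho\|H\|_\infty}\,T\sqrt{\lambda(\cdot)}$ produces a map $R\mapsto T\sqrt{\lambda(cR)}$ that does not stabilize when $\lambda$ grows faster than quadratically; the iteration over successive short windows could blow up in finite time. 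The paper sidesteps this by first working with the truncation $\lambda_K=\lambda(\cdot\wedge K)$, and then establishing the uniform bound $\sup_K\sup_{[0,T]}V^K_t\leq M$ via a moment estimate on $\E[V^K_t]$ that crucially exploits the \emph{negative} drift $\E[(v_{min}-V^K_s)\lambda_K(V^K_s)]\leq 0$ contributed by the reset jumps: this term dominates both the linear drift and the $\sqrt{\E[\lambda_K(V^K_s)]}$ coming from $\Gamma$, yielding a bound independent of $K$ and allowing one to discard the truncation. That cancellation mechanism is the heart of the existence proof and is absent from your sketch; without it the claim that ``iteration yields a unique solution on $[0,\infty)$'' is not justified.
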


Observe that if the repartition density $H$ attains its maximum at $0$, then \eqref{nsde} has a simpler
form, since 
$w \Gamma_t((\E[\lambda(V_{s-\theta})]\indiq_{\{s\geq \theta\}})_{s\geq 0})=
\int_{0}^{(t-\theta)\lor 0} \sqrt{\gamma \E[\lambda(V_s)]} \dd s$
with $\gamma=2 \rho H(0) w^2$, see Remark~\ref{ttip}. In this case \eqref{nsde} writes: 
\begin{equation*}
V_t \!=\!V_0+\int_0^t F(V_s) \dd s + \int_{0}^{(t-\theta)\lor 0} \sqrt{\gamma \E[\lambda(V_s)]} \dd s
+ \intot\int_0^\infty (v_{min}-V_\sm)\indiq_{\{u\leq \lambda(V_\sm)\}}\pi(\dd s,\dd u),
\end{equation*}
where the second term involves the non locally Lipschitz square root.

Consider the $n$-particle system described in Subsection~\ref{mm} (soft model) and denote by
$(V^{1,n}_t)_{t\geq 0}$ the time-evolution of the membrane potential of the first neuron and by
$(J_t^{1,n})_{t\geq 0}$ the process counting its spikes.
Theorem~\ref{mr2} tells us that, \emph{if propagation of chaos holds true}, under our assumptions, 
$(V_t^{1,n})_{t\geq 0}$ should tend in law (in the regime $N=np_n \to \infty$ and $w_n=wN^{-1/2}$)
to the unique solution $(V_t)_{t\geq 0}$ of \eqref{nsde}. See
Subsection~\ref{ppppptit} for more explanations.

Assumption (S1) seems rather realistic.
Our assumption  that $\lambda$ vanishes in a neighborhood of
$v_{min}$ actually implies that a neuron cannot spike again immediately after one spike.
Indeed, after being set to \(v_{min}\), we observe a refractory period corresponding to
the time the potential needs to exceed \(\alpha\). In addition, it allows us to consider
some time intervals \([a_k,a_{k+1}]\), in our proof of Proposition~\ref{prel2}, such that the
restriction of \(\nu_N^r\) to \([a_k,a_{k+1}]\times[0,L]\) is more or less an i.i.d. cloud of random points.
This is crucial in order to use the results of Calder, Esedoglu and Hero \cite{ceh},
who deal with i.i.d. clouds of random points. More precisely, the proof of Proposition~\ref{prel2}
(as well as that of Proposition~\ref{prel1} below) relies on Lemma~\ref{lcru}, in which we show
how to apply \cite{ceh} (or rather its immediate consequence Lemma~\ref{base})
to a possibly correlated {\it concatenation} of i.i.d. clouds of random points.

The growth condition on $F$ is one-sided and sufficiently general to our opinion, 
however, it is only here to prevent
us from explosion (we mean an infinite number of jumps during a finite time interval)
and it should be possible to replace it by weaker condition like 
$F(v) \leq (v-v_{min})\lambda(v)+C(1+(v-v_{min}))$, at the price of more complicated proofs.
So we believe that when $\theta>0$, our assumptions are rather reasonable.

On the contrary, when $\theta=0$, our conditions are restrictive, at least from a mathematical point of view. 
This comes from two problems when studying
the nonlinear SDE \eqref{nsde}. First, the term 
$\Gamma_t((\E[\lambda(V_{s})])_{s\geq 0})$ involves something like $\int_0^t\sqrt{\E[\lambda(V_{s})]}\dd s$,
and the square root is rather unpleasant. To solve this  problem, we use that 
$f_0((\alpha,\infty))>0$ and $F(\alpha)\geq 0$
imply that $s\mapsto \E[\lambda(V_{s})]$ is \emph{a priori} bounded from below on each compact time interval.
Since $\alpha$ is thought to be rather close to $v_{min}$, we believe these two conditions are not too 
restrictive in practice. Second, the coefficients of \eqref{nsde} are only locally Lipschitz continuous,
which is always a problem for nonlinear SDEs. Here we roughly solve the problem by assuming that
$f_0$ is compactly supported, which propagates with time. Again, we believe this is not too restrictive in
practice, since $F(v)$ should rather tend to $-\infty$ as $v\to \infty$ and in such a case, it should not be
difficult to show that any invariant distribution for \eqref{nsde} has a compact support.
However, one may use the ideas of \cite{fl} to remove this compact support assumption, here again,
at the price of a much more complicated proof.

\subsection{The hard model}
This case is generally difficult, but under the following quite restrictive assumptions
and when $\theta=0$,
it has the advantage to be explicitly solvable.

(H1): The initial distribution $f_0 \in \cP([v_{min},v_{max}])$
has a density, still denoted by $f_0$, continuous on $[v_{min},v_{max}]$.
The repartition density $H$ is continuous on $[0,L]$.
There is a constant $I>0$ such that the drift $F(v)=I$ for all $v \in [v_{min},v_{max}]$.

(H2): The density $f_0$ satisfies $f_0(v_{min})=f_0(v_{max})$,
the repartition density $H$ attains its maximum at $x=0$ and, setting $\sigma=\rho H(0) w^2$ 
the function $G_0=\sigma f_0+\sqrt{\sigma^2 f_0^2+ 2\sigma I  f_0}$
is Lipschitz continuous on $[v_{min},v_{max}]$.

Note that if the density \(f_0\) is Lipschitz continuous and bounded from below by a positive constant, then
\(G_0\) is also Lipschitz continuous.

\begin{prop}\label{debil1}
Assume only that $f_0\in \cP([v_{min},v_{max}))$
and consider a $f_0$-distributed random variable $V_0$.
For a continuous non-decreasing function $r:[0,\infty)\mapsto \R_+$ with $r_0=0$ there is
a unique c\`adl\`ag process $(V_t^r,J_t^r)_{t\geq 0}$, with values in $[v_{min},v_{max})\times \N$ solving
$$
V^r_t=V_0+It+r_t+(v_{min}-v_{max})J_t^r, \quad and \quad J_t^r=\sum_{s\leq t} \indiq_{\{V_\sm^r = v_{max}\}}.
$$
\end{prop}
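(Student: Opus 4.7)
The plan is to construct $(V^r, J^r)$ by induction on jump times and then verify uniqueness by showing those jump times are forced by the dynamics. Set $\tau_0 = 0$ and, given $\tau_k < \infty$ with the value $V_{\tau_k}^r \in [v_{min}, v_{max})$ already defined, put
\[
\tau_{k+1} = \inf\{t > \tau_k : V_{\tau_k}^r + I(t-\tau_k) + (r_t - r_{\tau_k}) \geq v_{max}\},
\]
and for $t \in [\tau_k, \tau_{k+1})$ define $J_t^r = k$ and $V_t^r = V_{\tau_k}^r + I(t-\tau_k) + (r_t - r_{\tau_k})$; at the jump, set $J_{\tau_{k+1}}^r = k+1$ and $V_{\tau_{k+1}}^r = v_{min}$, which equals $V_{\tau_{k+1}-}^r + (v_{min}-v_{max}) = v_{max} + (v_{min}-v_{max})$ since the inner map is continuous, non-decreasing and hits the level $v_{max}$ at $\tau_{k+1}$.

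Three points need to be checked. First, $\tau_{k+1} > \tau_k$ strictly: at $t=\tau_k$ the quantity inside the infimum equals $V_{\tau_k}^r < v_{max}$, and by continuity of $r$ it stays strictly below $v_{max}$ on a right-neighbourhood of $\tau_k$. Second, $\tau_{k+1} < \infty$: the map $t\mapsto It + r_t$ is non-decreasing and satisfies $It\to\infty$, so the threshold is crossed in finite time. Third, and this is the only quantitative point, the sequence $(\tau_k)$ does not accumulate. Indeed, by construction, $I(\tau_{k+1}-\tau_k) + (r_{\tau_{k+1}} - r_{\tau_k}) = v_{max} - V_{\tau_k}^r \geq v_{max}-v_{min}$ for every $k\geq 0$, so summing over $k$ such that $\tau_{k+1}\leq T$,
\[
(v_{max}-v_{min})\,\#\{k : \tau_{k+1} \leq T\} \;\leq\; IT + r_T < \infty,
\]
which forces $\tau_k \to \infty$ as $k\to\infty$ and makes $J_t^r$ finite for every $t$. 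The relation $V_t^r = V_0 + It + r_t + (v_{min}-v_{max})J_t^r$ is then verified on each slab $[\tau_k,\tau_{k+1})$ by telescoping, using that each jump subtracts exactly $v_{max}-v_{min}$ from $V^r$ while $J^r$ increases by $1$; and $J_t^r = \sum_{s\leq t}\indiq_{\{V_{s-}^r = v_{max}\}}$ holds because the càglàd version of $V^r$ reaches $v_{max}$ only at the times $\tau_k$.

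For uniqueness, let $(\tilde V, \tilde J)$ be any $[v_{min},v_{max})\times\N$-valued càdlàg solution. Since $\tilde J$ is integer-valued, non-decreasing and counts the instants where $\tilde V_{s-}=v_{max}$, on every interval where $\tilde J$ is constant the process $\tilde V$ equals $V_0 + It + r_t + (v_{min}-v_{max})\tilde J_t$, which is continuous and non-decreasing. The first jump time of $\tilde J$ is therefore the first $t>0$ at which $V_0 + It + r_t = v_{max}$, which is exactly our $\tau_1$; at that time, to keep $\tilde V$ in $[v_{min},v_{max})$, one must have $\Delta \tilde J_{\tau_1} = 1$, giving $\tilde V_{\tau_1} = v_{min} = V_{\tau_1}^r$. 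Iterating this argument inductively at each $\tau_k$ gives $(\tilde V, \tilde J) = (V^r, J^r)$.

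The only mildly delicate point is the non-accumulation of the $\tau_k$, which is settled by the elementary summation bound above; everything else is bookkeeping. Note that strict positivity of $I$ is used only through $It\to\infty$, so the hypothesis $F\equiv I>0$ from (H1) ensures $\tau_{k+1}<\infty$ even in trivial regimes where $r$ is constant.
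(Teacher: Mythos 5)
Your inductive construction of the jump times $\tau_k$, with the non-accumulation bound obtained by summing $I(\tau_{k+1}-\tau_k)+(r_{\tau_{k+1}}-r_{\tau_k})$ over $k$, is essentially the paper's proof stated there more tersely; the only slip is that the inequality $v_{max}-V^r_{\tau_k}\geq v_{max}-v_{min}$ holds only for $k\geq 1$ (where $V^r_{\tau_k}=v_{min}$), while for $k=0$ one has $V^r_{\tau_0}=V_0\geq v_{min}$ and the inequality reverses --- but summing from $k=1$ gives the same conclusion, so this is harmless. The paper additionally records the closed-form expression $V^r_t=v_{min}+(v_{max}-v_{min})\big\{(V_0+It+r_t-v_{min})/(v_{max}-v_{min})\big\}$ and $J^r_t=\big\lfloor(V_0+It+r_t-v_{min})/(v_{max}-v_{min})\big\rfloor$ (fractional and integer parts), which is exactly your telescoping written in closed form and is reused explicitly later in the proof of Proposition~\ref{prel1}.
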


Again, $(V^r_t)_{t\geq 0}$ represents the time-evolution of the
potential of one neuron, \emph{assuming} that the excitation resulting from the 
interaction with all the other neurons during $[0,t]$ produces, in the asymptotic where there
are infinitely many neurons, an increase of potential equal to $r_t$. And of course,
$J^r_t$ stands for the number of times the neuron under consideration spikes during $[0,t]$.

\begin{prop}\label{prel1}
Assume (H1) and fix a non-decreasing $C^1$-function 
$r:[0,\infty)\mapsto \R_+$ with $r_0=0$.
Consider an i.i.d. family $((J^{r,i}_t)_{t\geq 0})_{i\geq 1}$ of copies of $(J^r_t)_{t\geq 0}$ as introduced
in Proposition~\ref{debil1}. For each $i\geq 1$,
denote by $(T^i_k)_{k\geq 1}$ the jump instants of $(J^{r,i}_t)_{t\geq 0}$, written in the 
chronological order.
Consider an i.i.d. family $(X_i)_{i\geq 1}$ of random variables with density $H$, independent of the family
$((J^{r,i}_t)_{t\geq 0})_{i\geq 1}$. For $N \geq 1$, let $\nu^r_N=\sum_{i=1}^N \sum_{k\geq 1} 
\delta_{(T^i_k,X_i)}$.
For any $t\geq 0$,
$$
\lim_{N \to \infty} N^{-1/2} A_t(\nu_N^r)=\Gamma_t(g_r) \hbox{ a.s.,}
$$
with $\Gamma_t$ introduced in Definition~\ref{dfG} and with $g_r$ defined on $[0,\infty)$ by
$$
g_r(t)=\sum_{k\geq 0} f_0(k(v_{max}-v_{min})+v_{max}-It-r_t)(I+r'_t) \indiq_{\{t\in [a_k,a_{k+1})\}}, 
$$
with
$a_k$ uniquely defined by $I a_k+r_{a_k}=k(v_{max}-v_{min})$  (observe that $0=a_0<a_1<a_2<\dots$).
\end{prop}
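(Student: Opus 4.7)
The plan is to reduce the claim to Lemma~\ref{lcru}, which extends the \cite{ceh}-based Lemma~\ref{base} from i.i.d. clouds to possibly correlated concatenations of such clouds. The argument proceeds in three steps.

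First, I would make the law of the spike times explicit. Because $F\equiv I$ and $r\in C^1$, between spikes $V^r$ is a deterministic function of $V_0$: setting $\varphi(s)=Is+r_s$ and $m=v_{max}-v_{min}$, a straightforward induction gives $\varphi(T_{k+1})=(k+1)m-(V_0-v_{min})$. Since $V_0-v_{min}\in[0,m]$ and $\varphi$ is strictly increasing, this forces $T_{k+1}\in[a_k,a_{k+1})$, where $a_k$ is characterized by $\varphi(a_k)=km$. A change of variables under $V_0\sim f_0$ then yields that $T_{k+1}$ admits the density $s\mapsto f_0(v_{max}+km-\varphi(s))\varphi'(s)$ on $[a_k,a_{k+1})$, which is exactly $g_r$ restricted to that interval.

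Second, I would identify the local i.i.d. structure. The crucial observation is that each neuron contributes exactly one spike to each slice $[a_k,a_{k+1})$. Consequently, the random vectors $\{(T^i_{k+1},X_i):1\leq i\leq N\}$ are i.i.d. with joint density $g_r(s)H(x)$ on $[a_k,a_{k+1})\times[0,L]$, so the restriction of $\nu^r_N$ to each such slice is an i.i.d. cloud of exactly $N$ points with density proportional to $g_r(s)H(x)$. Since $\varphi(s)\geq Is$ grows at least linearly, only finitely many $a_k$ fall in $[0,t]$, and $\nu^r_N|_{[0,t]\times[0,L]}$ is thus a finite concatenation of such i.i.d. clouds, correlated across slices through the shared spatial marks $X_i$.

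Third, I would invoke Lemma~\ref{lcru} with the measurable density $g_r(s)H(x)$ on $[0,t]\times[0,L]$: it is tailored precisely for such correlated concatenations of i.i.d. clouds and delivers $N^{-1/2}A_t(\nu^r_N)\to\Gamma_t(g_r)$ almost surely. The main obstacle, and the reason Lemma~\ref{lcru} is needed rather than a direct appeal to \cite{ceh}, is precisely this cross-slice correlation through the $X_i$: one cannot simply apply \cite{ceh} slice by slice, since $A_t$ is a global quantity (an increasing chain for $\preceq$ may traverse many slices), and in addition $g_r$ is only piecewise continuous on $[0,t]$, with jumps at the $a_k$. The $C^1$-regularity of $r$ ensures that $g_r$ is continuous and bounded on each slice, which should suffice to meet the regularity hypotheses of Lemma~\ref{lcru}.
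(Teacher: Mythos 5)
Your proposal is correct and follows essentially the same route as the paper's own proof: derive the closed form $\varphi(T_{k+1})=(k+1)(v_{max}-v_{min})-(V_0-v_{min})$ (the paper obtains it via the observation $V^r_{a_k}=V_0$ and an induction on the jump instants $S_k$), read off the density of $T_{k+1}$ on $[a_k,a_{k+1}]$, check continuity of this density on each closed slice using (H1) and $r\in C^1$, and then feed this piecewise-i.i.d.-in-time structure (correlated across slices only through the common marks $X_i$) into Lemma~\ref{lcru} with $b_k=a_k$. The only difference is cosmetic compression: the paper devotes a separate step to verifying $V^r_{a_k}=V_0$ and another to the explicit induction giving $S_k\in[a_k,a_{k+1}]$ with density $h_k$, whereas you fold these into the single identity for $\varphi(T_{k+1})$.
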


Assume that we have $N$ independent neurons, of 
which the electric potentials evolve as $(V^r_t)_{t\geq 0}$ and that spike as $(J^r_t)_{t\geq 0}$. 
If all these spikes make start, without delay,
some fronts on the dendrite of another neuron and that these fronts 
evolve and annihilate as described in Proposition~\ref{tac}, then the number of fronts hitting the soma 
(of the neuron under consideration) equals $A_t(\nu_N^r)$. If each of these excitation events makes increase
the potential of the neuron by $w_N=w N^{-1/2}$ (with $w>0$), 
then, at the limit, the electric potential of the neuron
will be increased, due to excitation, by $w \Gamma_t(g_r)$ during $[0,t]$.

\begin{theo}\label{mr1}
Assume (H1)-(H2) and let $w>0$. There exists a unique non-decreasing $C^1$-function
$\kappa:[0,\infty)\mapsto \R_+$ such that $\kappa_0=0$ and $w \Gamma_t(g_\kappa)=\kappa_t$
for all $t\geq 0$.
Furthermore,
$$
\kappa_t=\sum_{k\geq 0} \big( k(v_{max}-v_{min})+v_{max}-It-\varphi_0^{-1}(t-ka)\big)
\indiq_{\{t \in [ka,(k+1)a)\}},
$$
where, with $G_0$ was defined in (H2),
we have set $\varphi_0(x)=\int_x^{v_{max}} \dd v/[G_0(v)+I]$ on $[v_{min},v_{max}]$ and
$a=\varphi_0(v_{min})$. Observe that $\varphi_0^{-1}$ is defined on $[0,a]$, and that
$\kappa'$ is periodic with period $a$.
\end{theo}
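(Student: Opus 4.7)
My plan is to exploit the structural simplification offered by (H2), namely that $H$ attains its maximum at $0$, so that Remark~\ref{ttip}(ii) gives $w\Gamma_t(g_\kappa)=\sqrt{2\sigma}\int_0^t\sqrt{g_\kappa(s)}\dd s$ (recall $\sigma=\rho H(0)w^2$). The fixed-point equation $\kappa_t=w\Gamma_t(g_\kappa)$ is then equivalent to the integral equation $\kappa_t=\sqrt{2\sigma}\int_0^t\sqrt{g_\kappa(s)}\dd s$, and once we know a $C^1$ solution exists, it reduces to the ODE $\kappa'_t=\sqrt{2\sigma g_\kappa(t)}$ with $\kappa_0=0$. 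This reduction is clean, so the heart of the problem is converting this self-referential ODE (since $g_\kappa$ depends on $\kappa'$ itself) into a genuine ODE on an auxiliary variable.

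Next I would unwind the definition of $g_\kappa$ on each interval $[a_k,a_{k+1})$. Setting $u_k(t)=k(v_{max}-v_{min})+v_{max}-It-\kappa_t$, the formula becomes $g_\kappa(t)=f_0(u_k(t))(I+\kappa'_t)$, so the equation reads $(\kappa'_t)^2=2\sigma f_0(u_k(t))(I+\kappa'_t)$. Solving this quadratic in $\kappa'_t$ and keeping the nonnegative root (forced by $\kappa$ non-decreasing and $f_0\geq 0$), we obtain $\kappa'_t=G_0(u_k(t))$, which in turn gives the autonomous ODE $u_k'(t)=-(I+G_0(u_k(t)))$ with initial condition $u_k(a_k)=v_{max}$ (the latter being immediate from the definition of $a_k$). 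Since $G_0$ is Lipschitz on $[v_{min},v_{max}]$ by (H2) and $I+G_0\geq I>0$, Cauchy--Lipschitz yields a unique decreasing solution on a maximal interval, and separating variables gives exactly $\varphi_0(u_k(t))=t-a_k$, i.e.\ $u_k(t)=\varphi_0^{-1}(t-a_k)$, reaching $v_{min}$ precisely at $t=a_k+a$ with $a=\varphi_0(v_{min})$. Hence $a_{k+1}=a_k+a=(k+1)a$, and inverting $u_k(t)$ gives the advertised closed form for $\kappa_t$.

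To assemble a global solution I would iterate this construction over successive intervals $[ka,(k+1)a)$: on each, the ODE above produces a piece $\kappa^{(k)}$, and the boundary values match because $u_k(a_k)=v_{max}$ and $u_{k-1}(a_k^-)=v_{min}$ together with the key identity $G_0(v_{min})=G_0(v_{max})$, which is exactly the content of the normalization $f_0(v_{min})=f_0(v_{max})$ in (H2). This guarantees that $\kappa'$ is continuous at each junction $a_k$, so the concatenation is genuinely $C^1$, and the periodicity of $\kappa'$ with period $a$ is a direct consequence of the autonomy of the ODE for $u_k$ and the fact that every interval starts with the same condition $u_k(a_k)=v_{max}$. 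Uniqueness follows by induction on $k$: any $C^1$ non-decreasing solution must, by the quadratic argument, satisfy $\kappa'=G_0(u_k)$ on $[a_k,a_{k+1})$, and ODE uniqueness then determines $\kappa$ on that interval from its value at $a_k$.

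The main obstacle I foresee is not the ODE step but justifying the passage from the integral equation $\kappa=w\Gamma(g_\kappa)$ to the pointwise ODE, and simultaneously that the unknown breakpoints $a_k$ are well-defined and form an unbounded increasing sequence. For existence this is handled by constructing $\kappa$ directly on $[0,a)$ via the ODE for $u_0$, then defining $a_1:=a$, and iterating; the compatibility condition $f_0(v_{min})=f_0(v_{max})$ is crucial precisely at these junctions to keep $\kappa\in C^1$ (otherwise $\kappa'$ would jump). For uniqueness, given any candidate $\kappa\in C^1$ with $\kappa_0=0$, one first argues that $\kappa'_t>0$ wherever $f_0(u_k(t))>0$, so the breakpoints $a_k$ defined by the theorem statement coincide with those produced by our construction, after which the quadratic/ODE argument closes the loop.
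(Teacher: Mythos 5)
Your proposal is correct and follows essentially the same route as the paper: reduce $\kappa_t=w\Gamma_t(g_\kappa)$ to the ODE $\kappa'_t=\sqrt{2\sigma g_\kappa(t)}$ via Remark~\ref{ttip}(ii), solve the resulting quadratic to obtain $\kappa'_t=G_0(\cdot)$, invoke Lipschitz continuity of $G_0$ for uniqueness of the ODE on each block $[a_k,a_{k+1}]$, and patch together using $f_0(v_{min})=f_0(v_{max})$. Your reformulation in terms of the autonomous variable $u_k(t)=k(v_{max}-v_{min})+v_{max}-It-\kappa_t$ is a clean repackaging of the paper's Steps~1--3, but not a genuinely different argument.
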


Consider the $n$-particle system described in Subsection~\ref{mm} (hard model), under the conditions
(H1)-(H2) and with $\theta=0$.
Denote by
$(V^{1,n}_t)_{t\geq 0}$ the time-evolution of the electric potential of the first neuron and by
$(J_t^{1,n})_{t\geq 0}$ the process counting its spikes.
Theorem~\ref{mr1} tells us that, \emph{if propagation of chaos holds true}, 
$(V_t^{1,n},J^{1,n}_t)_{t\geq 0}$ should tend in law (in the regime $N=np_n \to \infty$ and $w_n=wN^{-1/2}$)
to $(V_t^\kappa,J_t^\kappa)_{t\geq 0}$ as defined in Proposition~\ref{debil1} and with the 
above explicit
$\kappa$. See Subsection~\ref{gggggros} for a discussion, in particular concerning the noticeable
fact that $\kappa'$ is periodic.

The assumptions that $\theta=0$, that $F(v)=I$ and that $H(0)=\max_{[0,L]} H$ are crucial, at least to get
an explicit formula. It might be possible to study the case where $F(v)=I-Av$ for some $A>0$ 
(maybe with the condition $I-Av_{max}>0$), but it does not seem so friendly.
On the contrary, we assumed for convenience that $f_0(v_{min})=f_0(v_{max})$, which guarantees that 
$\kappa$ is of class $C^1$.
This assumption seems rather reasonable because the 
potentials directly jump from $v_{max}$ to $v_{min}$ so are in some sense valued
in the \emph{torus} $[v_{min},v_{max})$. However, it may be possible to relax it.

\section{Annihilating fronts and longest subsequences}\label{otfa}

The goal of this section is to prove Proposition~\ref{tac}.
We first introduce a few notation. For $M=(r,y) \in [0,\infty)\times [0,L]$, we denote by 
$M_s=r$ its time coordinate and by $M_x=y$ its space coordinate.
We recall that $M \preceq N$ if $|M_x-N_x|\leq \rho (N_s-M_s)$, which means that $N$
belongs to the cone with apex $M$ delimited by the half-lines 
$\{(r,M_x+\rho (r-M_s)) : r\geq M_s\}$ and $\{(r,M_x-\rho (r-M_s)) : r\geq M_s\}$; and that 
$M\prec N$ if $M\preceq N$ and $M\neq N$. 
We say that $M\perp N$ if $M$ and $N$ are not comparable, i.e. if neither $M\preceq N$ nor $N \preceq M$.
Observe that $M \perp N$ if and only if $|M_x-N_x|> \rho |M_s-N_s|$, whence in particular $M_x\neq N_x$ and 
$|M_s-N_s|\leq L/\rho$.

For $M \in [0,\infty)\times [0,L]$, we introduce the four sets, see Figure~\ref{fs},
\begin{gather*}
M^{\downarrow} = \{Q\in [0,\infty)\times [0,L] : Q \prec M\},\quad
M^\uparrow = \{Q\in[0,\infty)\times [0,L] : M \prec Q\}, \\
M^+\!=\{ Q\in[0,\infty)\times [0,L] : Q\perp M, Q_x>M_x \},\quad  M^-\!=\{Q\in[0,\infty)\times [0,L] : 
Q\perp M, Q_x<M_x\}.
\end{gather*}

\begin{figure}[ht]
\noindent\fbox{\begin{minipage}{0.44\textwidth}
\centerline{\includegraphics[width=0.8\textwidth]{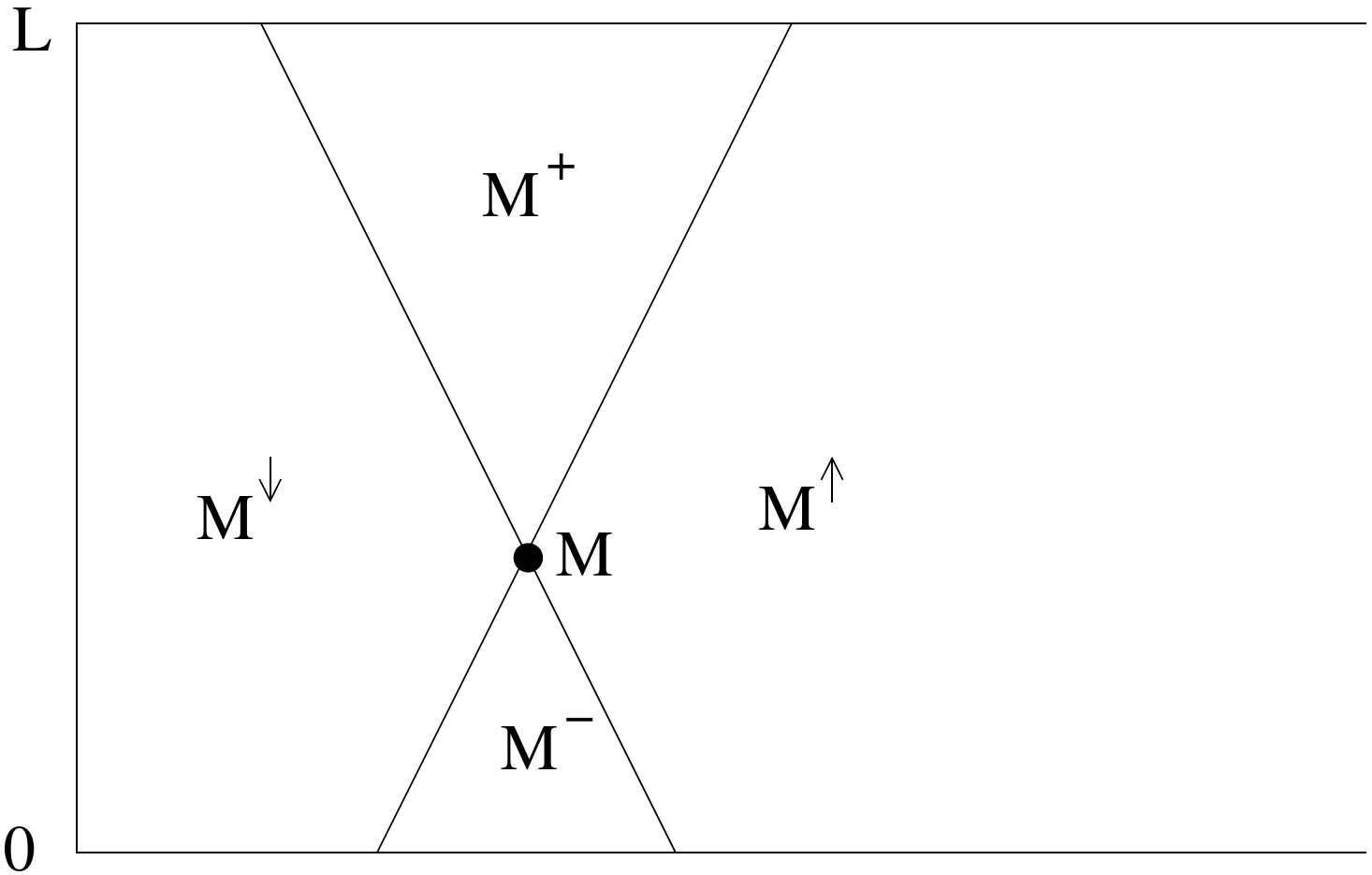}}
\end{minipage}
\hfill
\begin{minipage}{0.53\textwidth}
\caption{\label{fs}\small{We have drawn the four sets $M^{\downarrow}$, $M^\uparrow$, $M^+$ and $M^-$. 
The two oblique segments have slopes $\rho$ and $-\rho$.}}
\end{minipage}}
\end{figure}

\begin{proof}[Proof of Proposition~\ref{tac}]
Let $\nu=\sum_{i\in I} \delta_{M_i}$ be Radon, 
the set $\cS_\nu=\{M_i=(t_i,x_i):i \in I\}$ consisting of distinct points of 
$[0,\infty)\times [0,L]$.
We assume that $\nu \neq 0$ (because otherwise the result is obvious) and \eqref{condich}.
We recall that $A(\nu)\in\N\cup\{\infty\}$ and $A_t(\nu)\in\N$ were introduced in Definition~\ref{A}.
We call $B(\nu)\in\N\cup\{\infty\}$ the total number of fronts hitting the soma and $B_t(\nu)\in\N$ 
the number of fronts hitting the soma
before $t$. 

If two fronts start from $M$ (i.e. start from $M_x\in[0,L]$ at time $M_s\geq 0$), 
the positive one is, if not previously annihilated, 
at position $M_x-\rho(r-M_s)$ at time $r \in [M_s, M_s+M_x/\rho)$ and hits the soma at time $M_s+M_x/\rho$; the 
negative one is, if not previously annihilated, at position $M_x+\rho(r-M_s)$ at time 
$r \in [M_s, M_s+(L-M_x)/\rho)$ 
and disappears at time $M_s+(L-M_x)/\rho$.

We have the two following rules: for two distinct points $M,N \in \cS_\nu$,

(a) if $M\prec N$, i.e. $M \in N^\downarrow$ or, equivalently, $N \in M^\uparrow$, 
the fronts starting from $M$ cannot meet those
starting from $N$. Indeed, $M\prec N$ and
\eqref{condich} imply that 
$|M_x-N_x|<\rho(N_s-M_s)$ and a little study shows that for all $r\geq N_s$, $\{M_x-\rho (r-M_s), M_x+\rho (r-M_s)\}
\cap \{ N_x-\rho (r-N_s),N_x+\rho (r-N_s)\}=\emptyset$;

(b) if $M \perp N$ and $M_x<N_x$ (i.e. if $M \in N^-$ or, equivalently, $N \in M^+$)
the positive front starting from $N$ meets the negative front starting from $M$
\emph{if none of these two fronts have been previously annihilated}.
More precisely, they meet at $[N_x+M_x+\rho(N_s-M_s)]/2\in [0,L]$ at time 
$(N_x-M_x+\rho(N_s+M_s))/(2\rho)$,
which is greater than $M_s\lor N_s$.

\emph{Step 1.} Here we prove that $A(\nu)=B(\nu)$.

\begin{figure}[ht]
\noindent\fbox{\begin{minipage}{0.9\textwidth}
\centerline{\includegraphics[width=0.7\textwidth]{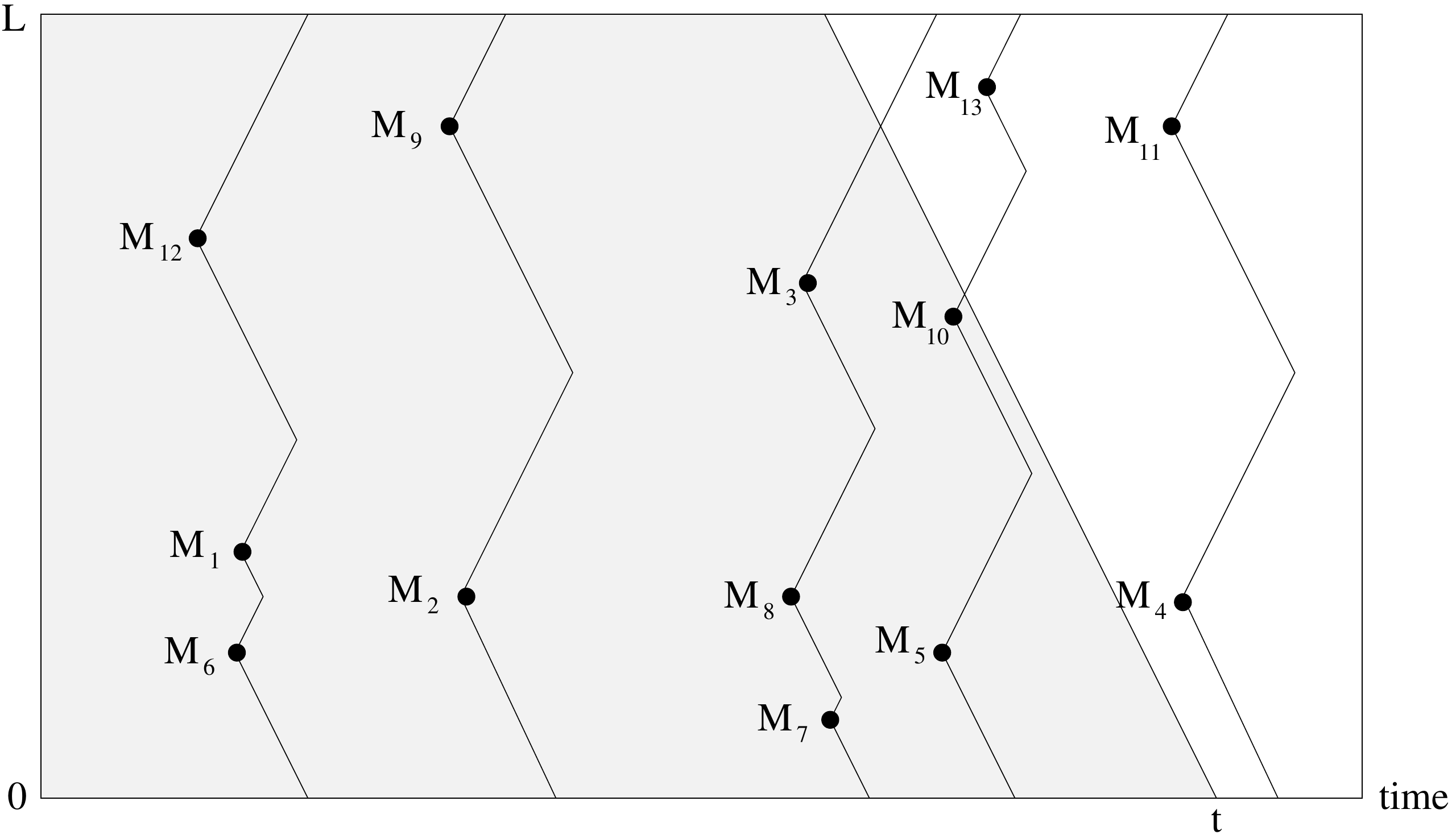}}
\caption{\label{fig1}
\small{All the (broken) lines have the same slope $\rho$ (or $-\rho$). The domain in gray is $D_t$.
The positive fronts are those going down, the negative fronts are those going up.
Here we have $G_1=\{M_1,M_6,M_{12}\}$, $G_2=\{M_2,M_9\}$, $G_3=\{M_3,M_7,M_8\}$, $G_4=\{M_5,M_{10},M_{13}\}$
and $G_5=\{M_4,M_{11}\}$.
And $P_1=M_6$, $P_2=M_1$ and $P_3=M_{12}$.}}
\end{minipage}}
\end{figure}

\emph{Step 1.1.}
We introduce $G_1$, the set of all minimal (for $\prec$) elements of $\cS_\nu$. See Figure~\ref{fig1}. This set is 
non empty because $\nu\neq 0$. It is also bounded (and thus finite since $\#(G_1)=\nu(G_1)$ and since
$\nu$ is Radon): 
fix $M \in G_1$ and observe that $G_1 \subset\{ M \} \cup  M^+\cup M^-\subset [0,M_s+L/\rho]\times [0,L]$.
We thus may write $G_1=\{P^1,\dots,P^k\}$, ordered in such a way that $P^1_x < P^2_x< \dots < P^k_x$.

We now show that all the fronts starting in $G_1$ annihilate,
except the positive one starting from $P^1$ (it reaches the soma at time $P^1_s+P^1_x/\rho$) and the
negative one starting from $P^k$ (it reaches the other extremity of the dendrite). See Figure~\ref{fig1}.

$\bullet$ We first verify by contradiction that the positive front starting from $P^1$ hits the soma.
If this is not the case, then, due to the above rules (a)-(b),
it has been annihilated by some front starting from some $Q \in \cS_\nu \cap P^{1-}$.
This is not possible, because $\cS_\nu \cap P^{1-}=\emptyset$. 

Indeed, assume that  $\cS_\nu \cap P^{1-}\neq\emptyset$
and consider a minimal (for $\prec$) element $Q$ of $\cS_\nu \cap P^{1-}$. Then
$Q$ is minimal in $\cS_\nu$ because else, we could find $M \in \cS_\nu\cap Q^\downarrow$,
whence $M \in \cS_\nu\cap Q^\downarrow \cap (P^{1-})^c$ (since $Q$ is minimal in $\cS_\nu\cap P^{1-}$)
whence $M \prec P^1$ (because $M \prec Q$, $Q \in P^{1-}$ and $M \notin P^{1-}$ implies that $M\prec P^1$), which 
is not possible because $P^1$ is minimal. So $Q$ is minimal in $\cS_\nu$, i.e. $Q\in G_1$,
and we furthermore have $Q_x<P^1_x$. This contradicts the definition of $P^1$.

$\bullet$ Similarly, one verifies that the negative front starting from $P^k$ hits the other extremity
($x=L$) of the dendrite.

$\bullet$ We finally fix $i\in \{1,\dots,k-1\}$ and show by contradiction that the negative front starting from 
$P^{i}$ does meet the positive front starting from $P^{i+1}$. 
Assume for example that the negative front starting at $P^i$ is annihilated before it 
meets the positive front starting from $P^{i+1}$. Then there is a point 
$Q \in \cS_\nu\cap P^{i+}\cap (P^{i+1 -} \cup P^{i+1\downarrow})$.
Indeed, $Q$ has to be in $P^{i+}$ so that the positive front
starting from $Q$ kills the negative front starting from $P^i$, and $Q$ has to be in $P^{i+1 -} \cup P^{i+1\downarrow}$
so that the killing occurs before the negative front starting from $P^i$ meets the 
positive front starting from $P^{i+1}$. 
But $\cS_\nu \cap P^{i+1\downarrow}=\emptyset$, since $P^{i+1}$ is minimal
in $\cS_\nu$. Hence $Q \in \cS_\nu \cap P^{i+}\cap P^{i+1-}$, so that $\cS_\nu \cap P^{i+}\cap P^{i+1-}$ is not empty.

We thus may consider a minimal (for $\prec$) element $R\in \cS_\nu \cap P^{i+}\cap P^{i+1-}$.
But then $R$ is minimal in $\cS_\nu$ because else, we could find $M \in \cS_\nu\cap R^\downarrow$
whence $M \in \cS_\nu\cap R^\downarrow \cap (P^{i+}\cap P^{i+1-})^c$ (since $R$ is minimal in 
$\cS_\nu\cap P^{i+}\cap P^{i+1-}$) whence $M \prec P^i$ or $M \prec P^{i+1}$ 
(because $M \prec R$, $R \in P^{i+}\cap P^{i+1-}$ and $M \notin P^{i+}\cap P^{i+1-}$ implies that $M\prec P^i$
or $M\prec P^{i+1}$), which is not possible because $P^i$ and $P^{i+1}$ are minimal.
At the end, we conclude that $R$ is minimal in $\cS_\nu$, i.e. 
$R \in G_1$, with furthermore $P^i_x<R_{x}<P^{i+1}_x$, which contradicts
the definition of $P^i$ and $P^{i+1}$.

\emph{Step 1.2.}
If $\cS_\nu \setminus G_1 = \emptyset$, we go directly to the concluding step. 
Otherwise, we introduce the (finite) set 
$G_2$ of all the minimal elements of $\cS\setminus G_1$. 
The fronts starting from a point in $G_2$ cannot be annihilated by those starting from a point in $G_1$
(because as seen in Step 1.1,  all the fronts in $G_1$ do annihilate together, except one that does hit the  
soma and one that does hit the other extremity: the fronts starting in $G_1$ do not interact with those
starting in $\cS_\nu \setminus G_1$). And
one can show, exactly as in Step 1.1, that
all the fronts starting in $G_2$ annihilate, except one positive front that hits the soma
and one negative front that hits the other extremity.

\emph{Step 1.3.}
If $\cS_\nu \setminus (G_1\cup G_2)=\emptyset$, we go directly to the concluding step. 
Otherwise, we introduce the (finite) set $G_3$ of all the minimal elements of $\cS\setminus (G_1\cup G_2)$. 
As previously, the fronts starting from a point in $G_3$ cannot be annihilated by those starting from 
a point in $G_1\cup G_2$.
And one can show, exactly as in Step 1.1, that
all the fronts starting in $G_3$ annihilate, except one positive front that hits the soma
and one negative front that hits the other extremity.

\emph{Step 1.4.} If $\cS_\nu \setminus (G_1\cup G_2\cup G_3)=\emptyset$, etc.

\emph{Concluding step.}
If the procedure stops after a finite number of steps, then there exists $n\in \N_*$ such that
$\cS_\nu=\cup_{k=1}^n G_k$, where $G_1$ is the set of all minimal elements of $\cS_\nu$ and, for all $k=2,\dots,n$,
$G_k$ is the set of all minimal elements of $\cS_\nu \setminus (\cup_{i=1}^{k-1}G_i)$.
We have seen that for each $k=1,\dots,n$, exactly one front starting from a point in $G_k$ hits the soma,
so that $B(\nu)=n$. And we also have $A(\nu)=n$. Indeed, choose $Q_n \in G_n$, there is necessarily
$Q_{n-1} \in G_{n-1}$ such that $Q_{n-1}\prec Q_n$, ..., and there is necessarily $Q_1 \in G_1$ such that
$Q_1\prec Q_2$. We end with an increasing sequence $Q_1\prec \dots \prec Q_n$ of points of $\cS_\nu$,
whence $A(\nu)\geq n$. We also have $A(\nu)\leq n$ because otherwise, we could find a 
sequence $R_1\prec \dots \prec R_{n+1}$ of points of $\cS_\nu$, and $\cS_\nu\setminus (\cup_{k=1}^n G_k)$
would contain at least $R_{n+1}$ and thus would not be empty.

If the procedure never stops, we have $A(\nu)=B(\nu)=\infty$ in which case $A(\nu)=B(\nu)$, as desired.

\emph{Step 2.} We now fix $t\geq 0$. By Step 1 applied to $\nu|_{D_t}$, we know that $B(\nu|_{D_t})=A(\nu|_{D_t})$, 
which equals $A_t(\nu)$ by definition. To conclude the proof, it thus only remains to check that 
$B_t(\nu)=B(\nu|_{D_t})$. This is clear when having a look at figure~\ref{fig1}: removing the points
$M_4,M_{11},M_{13}$ would not modify the number of fronts hitting the soma before $t$.
Here are the main arguments. We recall that for $M\in [0,\infty)\times [0,L]$, we have $M \in D_t$
if and only if $M \preceq (t,0)$ (i.e. $M_x \leq \rho (t-M_s)$).

$\bullet$ A (positive) front hitting the soma does it before time $t$ if and only if 
it starts from some point $M\in\cS_\nu \cap D_t$ (because such a front
hits the soma at time $M_s+M_x/\rho$, which is smaller than $t$ if and only if $M\preceq (t,0)$).

$\bullet$ A positive front starting from some  $M\in\cS_\nu \cap D_t$ always remains in $D_t$
(because $M_x\leq \rho(t-M_s)$ implies that $M_x-\rho(r-M_s) \leq \rho(t-r)$ for all $r \in [M_s,M_s+M_x/\rho]$).

$\bullet$ A front starting from some $M\in\cS_\nu \setminus D_t$ 
always remains outside $D_t$ (for e.g. the positive front starting from $M$, 
$M_x > \rho(t-M_s)$ implies that
$M_x-\rho(r-M_s) > \rho(t-r)$ for all $r \in [M_s,M_s+M_x/\rho]$).
\end{proof}

\section{Number of fronts in the piecewise i.i.d. case}\label{central}

The goal of this section is to check the following result, relying on \cite{ceh}.

\begin{lem}\label{lcru}
Let $H$ be a continuous probability density on $[0,L]$. Fix $0\leq b_0<b_1<\dots$ and consider, 
for each $k\geq 0$,
a probability density $g_k$ on $[b_k,\infty)$, continuous on $[b_k,b_{k+1}]$.
Consider an i.i.d. family $(X_i)_{i\geq 1}$ of $[0,L]$-valued random variables with density $H$ and, for each 
$k\geq 0$, an i.i.d. family $(T_k^i)_{i\geq 1}$ of $[b_k,\infty)$-valued random variables with density $g_k$.
We assume that for each $k\geq 0$, the family $(X_i)_{i\geq 1}$ is independent of the family $(T_k^i)_{i\geq 1}$
(but the families $(X_i,T_k^i)_{i\geq 1}$ and $(X_i,T_\ell^i)_{i\geq 1}$, with $k\neq \ell$, 
are allowed to be correlated in any possible way).
For each $N\geq 1$, we set 
$\nu_N= \sum_{i=1}^N \sum_{k\geq 0} \indiq_{\{T^i_k \leq b_{k+1}\}} \delta_{(T^i_k,X_i)}$. Then
$$
\lim_{N \to \infty} \frac{A_t(\nu_N)}{\sqrt N} = \Gamma_t(g) \quad \hbox{a.s.}
$$
for each $t\geq 0$, where $g(s)=\sum_{k\geq 0}g_k(s)\indiq_{\{s\in [b_k,b_{k+1}]\}}$.
\end{lem}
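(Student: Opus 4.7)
The plan is as follows. Fix $t \geq 0$ and let $K = \max\{k \geq 0 : b_k \leq t\}$. For $k \in \{0,\dots,K\}$, set $\tau_k = t \wedge b_{k+1}$ and
$$
\nu_N^{(k)} = \sum_{i=1}^N \indiq_{\{T_k^i \in [b_k,b_{k+1})\}} \delta_{(T_k^i,X_i)},
$$
so that $\nu_N|_{D_t} = \sum_{k=0}^K \nu_N^{(k)}|_{D_t}$ (points of layers $k>K$ have time $>t$ and lie outside $D_t$). The key observation is that for each \emph{fixed} $k$, the family $(T_k^i,X_i)_{i\geq 1}$ is i.i.d.\ on $[b_k,\infty)\times[0,L]$ with density $g_k(s)H(x)$, so $\nu_N^{(k)}$ is, conditionally on its total mass $N_k$, an i.i.d.\ cloud of $N_k$ points of density $g_k(s)H(x)/p_k$ on $[b_k,b_{k+1}]\times[0,L]$, where $p_k=\int_{b_k}^{b_{k+1}}g_k$. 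The strong law gives $N_k/N\to p_k$ a.s., and applying Lemma~\ref{base} layer by layer yields
$$
\lim_{N\to\infty}\frac{A_t(\nu_N^{(k)})}{\sqrt N}=\Lambda_k:=\sup_{\beta_k\in\cB_t^{(k)}}\sqrt{\tfrac{2}{\rho}}\int_{b_k}^{\tau_k}\sqrt{H(\beta_k(s))\,g_k(s)\,(\rho^2-\beta_k'(s)^2)}\,ds\quad\text{a.s.,}
$$
with $\cB_t^{(k)}$ the set of $C^1$-functions $\beta_k:[b_k,\tau_k]\to[0,L]$ satisfying $\sup|\beta_k'|<\rho$ and $\beta_k(s)\leq\rho(t-s)$.

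For the upper bound, any $\prec$-increasing subsequence of $\cS_{\nu_N}\cap D_t$ splits by layer into $\prec$-increasing subsequences lying in the various $\cS_{\nu_N^{(k)}}\cap D_t$, so $A_t(\nu_N)\leq\sum_{k=0}^K A_t(\nu_N^{(k)})$, giving $\limsup_N A_t(\nu_N)/\sqrt N\leq\sum_{k=0}^K\Lambda_k$ a.s. One then checks the identity $\sum_{k=0}^K\Lambda_k=\Gamma_t(g)$. The inequality $\Gamma_t(g)\leq\sum_k\Lambda_k$ is immediate, since restricting any $\beta\in\cB_t$ to each layer produces an admissible $\beta_k$ and $\cI_t(g,\beta)$ splits additively because $g=g_k$ on $[b_k,b_{k+1}]$. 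The reverse inequality is proved by gluing: given $\e$-optimal $\beta_k$'s, linearly interpolate on intervals of length $\eta>0$ on either side of each $b_{k+1}$ so as to match values at the common endpoint; no modification is needed at $t$ since $\beta_K(t)\leq\rho(t-\tau_K)=0$ already forces $\beta_K(t)=0$. Each interpolation costs $O(\eta)$ in the integral because the integrand is uniformly bounded by $\rho\sqrt{\sup H\sup g_k}$, and the total cost $O(K\eta)$ vanishes as $\eta\to 0$.

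For the matching lower bound, pick $\beta\in\cB_t$ with $\cI_t(g,\beta)\geq\Gamma_t(g)-\e$. The strategy is to produce, inside each $\nu_N^{(k)}$, a $\prec$-increasing subsequence of length at least $\sqrt N\int_{b_k}^{\tau_k}\sqrt{(2/\rho)H(\beta)g_k(\rho^2-\beta'^2)}\,ds-o(\sqrt N)$ whose first and last points lie within $o(1)$ of $(b_k,\beta(b_k))$ and $(\tau_k,\beta(\tau_k))$ respectively. The continuity of $\beta$ combined with $|\beta'|<\rho$ then guarantees, for $N$ large, that the last point of the layer-$k$ sequence is $\prec$ the first point of the layer-$(k+1)$ sequence, so the concatenation is a genuine $\prec$-increasing subsequence of $\nu_N$, of length $\geq\sqrt N\,\cI_t(g,\beta)-o(\sqrt N)\geq\sqrt N(\Gamma_t(g)-\e)-o(\sqrt N)$. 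The main obstacle is the construction of these localized near-optimal sequences with approximately prescribed endpoints; this can be handled either by subdividing each $[b_k,\tau_k]$ into many small sub-intervals on which $H$, $g_k$ and $\beta'$ are essentially constant and applying Lemma~\ref{base} within a thin tube around $\beta$ in each sub-rectangle, or by invoking a two-endpoint refinement of the variational formula from \cite{ceh}.
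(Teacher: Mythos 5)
Your upper bound is flawed: the identity $\sum_{k=0}^K\Lambda_k=\Gamma_t(g)$ that it rests on is false in general. The $\Lambda_k$'s are suprema over \emph{independent} choices of $\beta_k$ on each layer $[b_k,\tau_k]$, while $\Gamma_t(g)$ forces a single continuous $\beta$. The gluing argument you use to prove $\sum_k\Lambda_k\le\Gamma_t(g)$ does not work: if $\beta_k(\tau_k)$ and $\beta_{k+1}(b_{k+1})$ differ by an amount $d>0$ that does not shrink with $\eta$, then linear interpolation over an interval of length $\eta$ has slope $\sim d/\eta$, which exceeds $\rho$ once $\eta<d/\rho$; the interpolated function is no longer admissible, so the $O(\eta)$ cost estimate is vacuous. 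Indeed $\sum_k\Lambda_k>\Gamma_t(g)$ can actually happen: take $H$ sharply peaked at two well-separated sites $x_0,x_1\in(0,L)$ with $H(x_1)>H(x_0)$, $g$ piecewise constant, and choose $t$, $b_1$, $\rho$ so that the last-layer constraint $\beta(t)=0$ together with $|\beta'|<\rho$ makes $x_0$ optimal for $\Lambda_1$ while $x_1$ remains optimal for $\Lambda_0$; the optimal single $\beta$ must then pay a transition (where $\rho^2-\beta'^2$ is small) that the independent $\beta_k$'s avoid by ``teleporting'' at $b_1$. So your upper bound proves $\limsup_N N^{-1/2}A_t(\nu_N)\le\sum_k\Lambda_k$, which is correct but strictly weaker than what is needed.

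The paper circumvents this by never comparing layer-wise suprema without a common tube constraint. In Step 3 it first covers the domain by finitely many thin tubes $B_{\beta,\delta}$ around fixed paths $\beta$, so that $A(\nu_N|_{\mathring D_t})\le\max_\beta A(\nu_N|_{B_{\beta,\delta}\cap\mathring D_t})$. Within a fixed tube, the near-optimal $\alpha_k$ for each layer is forced to stay within distance $\delta$ of $\beta$, hence $|\alpha_{k-1}(b_k)-\alpha_k(b_k)|<2\delta$ for every $k$. The pieces are then glued by additive shifts (total size $\le 2k_0\delta$) and clamped into $[0,L]$, and the resulting error is controlled by the modulus of continuity of $\sqrt H$ (Step 3.4). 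Your lower bound is closer in spirit to the paper's Step 2 but is also underspecified: having ``first and last points within $o(1)$ of prescribed positions'' does not by itself imply that the last point of layer $k$ is $\prec$ the first point of layer $k+1$, since both converge to the same point and could fall in either order. The paper enforces the order deterministically by leaving a deliberate time gap $a\delta$ at the end of each layer (the truncated tubes $B^k_{\beta,a,\delta}$ and inequality~\eqref{oobb}), with $a$ tied to the slack $\eta=1-\rho^{-1}\sup|\beta'|$. Your fallback suggestion of ``a two-endpoint refinement of the variational formula from \cite{ceh}'' is not available there and would itself require proof. The correct strategy is to localize to a tube \emph{before} taking layer-wise suprema, in both bounds.
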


This result will be applied, more or less directly, to prove our two
main results, {\it via} Propositions~\ref{prel2} and~\ref{prel1}. 
In both cases, we will indeed be able to partition time in a family of intervals $[b_k,b_{k+1})$
during which the {\it stimuli} arrive in an i.i.d. manner on the dendrite under consideration, 
even if the whole family of those stimuli is not independent.
In the case of the soft model, this uses crucially the fact that Assumption (S1) induces a refractory period:
a neuron spiking at time \(t\) cannot spike again during $(t,t+\delta]$ for some deterministic $\delta>0$ (depending on
$t\geq 0$ and on many other parameters).

This section is the most technical of the paper. We have to be very careful, because as already mentioned,
$\Gamma_t(g)$ is rather sensitive. For example, modifying the density $H$ at one point does of course not
affect the empirical measure $\nu_N$, while it may drastically modify the value of $\Gamma_t(g)$
(recall that $\Gamma_t(g)$ depends on $H$, see Definition~\ref{dfG}).

In the whole section, the continuous density $H$ on $[0,L]$ is fixed.
We first adapt the result of \cite{ceh}.
\begin{lem}\label{base}
Fix $0\leq a<b$ and a continuous density
$h$ on $[a,b]$. 
Consider an i.i.d. family $(Z_i)_{i\geq 1}$ of  $[a,b]\times[0,L]$-valued random variables with density 
$h(s)H(x)$. For $N\geq 1$, define $\pi_N=\sum_1^N\delta_{Z_i}$.
For any bounded open domain $B \subset [0,\infty)\times[0,L]$ with Lipschitz boundary,
$$
\lim_{N\to\infty} \frac{A(\pi_N|_B)}{\sqrt N} = \Lambda_B(h) \quad \hbox{a.s.},
$$
where $\Lambda_B(h)=\sup_{\beta \in \cB} \cJ_B(h,\beta)$, $\cB$ being the set of $C^1$-functions
defined on a closed bounded interval $I_\beta\subset \R$ into $\R$ and 
satisfying $\sup_{s\in I_\beta}|\beta'(s)|< \rho$ and, for $\beta \in \cB$,
$$
\cJ_B(h,\beta)=\sqrt{\frac 2\rho}\int_{I_\beta} \sqrt{h(s)H(\beta(s))\indiq_{\{s\in(a,b),\beta(s)\in(0,L)\}}
\indiq_{\{(s,\beta(s))\in B\}}
[\rho^2-(\beta'(s))^2]} \dd s.
$$
Of course, we set $h(s)H(x)\indiq_{\{s\in(a,b),x\in(0,L)\}} = 0$ if $(s,x)\notin (a,b)\times(0,L)$, even if
$h(s)H(x)$ is not defined.
\end{lem}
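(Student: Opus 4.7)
The plan is to reduce to the result of Calder, Esedoglu and Hero \cite{ceh} via a linear change of variables that rectifies the partial order $\preceq$ into the usual componentwise order on $\R^2$. Define $\Phi:(s,x)\mapsto(u,v):=(\rho s-x,\rho s+x)$, a bijection with Jacobian $2\rho$. Directly from the definitions, $(s,x)\preceq(s',x')$ if and only if $u\leq u'$ and $v\leq v'$. Consequently, $\prec$-chains among $\cS_{\pi_N}\cap B$ correspond bijectively to chains for the componentwise order among $\Phi(\cS_{\pi_N})\cap\tilde B$, where $\tilde B:=\Phi(B)$. The transformed points $\Phi(Z_i)$ are i.i.d.\ with density
$$
\tilde h(u,v)=\frac{1}{2\rho}\,h\Big(\frac{u+v}{2\rho}\Big)H\Big(\frac{v-u}{2}\Big),
$$
continuous on the parallelogram $\Phi([a,b]\times[0,L])$. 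Since $\Phi$ is bi-Lipschitz, $\tilde B$ inherits a Lipschitz boundary from $B$, and boundedness of $\tilde h$ ensures that almost surely no $\Phi(Z_i)$ lies on $\partial\tilde B$. The result of \cite{ceh} then yields $A(\pi_N|_B)/\sqrt N\to\tilde\Lambda_{\tilde B}(\tilde h)$ a.s., where
$$
\tilde\Lambda_{\tilde B}(\tilde h):=\sup_{\gamma}\,2\!\int\!\sqrt{\tilde h(\gamma(t))\,\dot\gamma_1(t)\,\dot\gamma_2(t)}\,\dd t,
$$
the supremum ranging over componentwise non-decreasing $C^1$ curves $\gamma=(\gamma_1,\gamma_2)$ taking values in $\tilde B$.

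It remains to identify $\tilde\Lambda_{\tilde B}(\tilde h)=\Lambda_B(h)$. Any admissible $\gamma$ pulls back under $\Phi^{-1}$ to a path $(s(\cdot),x(\cdot))$ in $B$ with $\dot s>0$ and $|\dot x|<\rho\dot s$; reparametrizing by $s$ and writing $x=\beta(s)$ gives $\beta\in\cB$, with $\dd u=(\rho-\beta'(s))\dd s$ and $\dd v=(\rho+\beta'(s))\dd s$, hence
$$
2\sqrt{\tilde h(u,v)\,\dd u\,\dd v}=\sqrt{2/\rho}\,\sqrt{h(s)\,H(\beta(s))\,[\rho^2-\beta'(s)^2]}\,\dd s,
$$
which is exactly the integrand defining $\cJ_B(h,\beta)$. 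The indicators in $\cJ_B$ are automatic: $\tilde h$ vanishes off $\Phi([a,b]\times[0,L])$, killing the integrand whenever $s\notin(a,b)$ or $\beta(s)\notin(0,L)$, and $(u,v)\in\tilde B$ is equivalent to $(s,\beta(s))\in B$. Taking suprema in both directions yields the identification.

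The main obstacle will be the clean invocation of \cite{ceh}: one must verify its hypotheses in the transformed picture (continuity of $\tilde h$ and Lipschitz regularity of $\tilde B$ are both inherited from those of $h$, $H$, $B$, but one should confirm that \cite{ceh} accommodates densities that only extend continuously up to the boundary, rather than being strictly positive or smooth on the closure) and ensure that the variational characterization transfers through the change of variables without loss. A secondary subtlety is that $\cB$ permits $\beta$ to leave $[0,L]$, whereas the paths appearing in \cite{ceh} lie in $\tilde B$; but the indicator $\indiq_{\{\beta(s)\in(0,L)\}}$ in $\cJ_B$ kills the integrand there, so enlarging the class of paths does not affect the supremum.
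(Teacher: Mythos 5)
Your approach is essentially the paper's: the same linear change of variables $\psi(s,x)=(\rho s-x,\rho s+x)$, the same observation that it conjugates $\prec$ to the componentwise order, the same computation of the Jacobian and the resulting integrand, and the same reparametrization $x=\beta(s)$. The computational identities are correct. However there are two places where you glide over steps that the paper carries out carefully, and both deserve attention.

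First, the localization to $\tilde B$. The theorem of Calder, Esedoglu and Hero as used here concerns the longest chain in the \emph{entire} i.i.d.\ sample, whose density is supported on the full parallelogram $\Phi((a,b)\times(0,L))$. You write ``The result of \cite{ceh} then yields $A(\pi_N|_B)/\sqrt N\to\tilde\Lambda_{\tilde B}(\tilde h)$'' as if the theorem directly gives the limit for chains confined to a subdomain $\tilde B$, but it does not. One needs an extra reduction: let $c_{\tilde B}=\int_{\tilde B}\tilde h$; conditionally on the (random) number $|S_N|$ of sample points falling in $\tilde B$, these form an i.i.d.\ sample from the renormalized density $c_{\tilde B}^{-1}\tilde h\indiq_{\tilde B}$ (which, if $c_{\tilde B}>0$, satisfies the hypotheses), and $|S_N|/N\to c_{\tilde B}$ a.s.\ allows one to change the normalization from $|S_N|^{-1/2}$ to $N^{-1/2}$. (The case $c_{\tilde B}=0$ must be treated separately: both sides are $0$.) This is the content of the paper's Step 2 and it is genuinely needed.

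Second, the strict-monotonicity issue in the variational identification. You assert that an admissible curve $\gamma$ for the componentwise order pulls back to a path with $\dot s>0$ and $|\dot x|<\rho\dot s$, but the curves in \cite{ceh} are only required to be componentwise \emph{non}-decreasing, which yields $\dot s\geq 0$ and $|\dot x|\leq\rho\dot s$. Reparametrizing by $s$ requires $\dot s>0$. The paper fixes this with an approximation $\alpha^n(r)=(\alpha_1(r)+r/n,\alpha_2(r))$ and Fatou's lemma to show the sup over non-strict paths equals the sup over strict ones; this argument also uses the continuity of $R$ on the open set $O\cap B$ to make $R\indiq_B$ lower semicontinuous along the approximation. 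Your proof should say something to this effect rather than assume strictness. Once these two points are patched, your argument matches the paper's.
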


\begin{proof}
We first recall a \(2d\) version of~\cite[Theorem 1.2]{ceh}, which concerns the length of the longest
increasing subsequence 
(for the usual partial order $\pre$ of $\R^2$) one can find in a cloud of $N$ i.i.d. points with 
positive continuous density on a regular domain $O\subset \R^2$.
In a second step, we easily deduce the behavior of the length of the longest increasing subsequence (for the same 
random variables and the same order) included in a subset $G$ of $O$.
It only remains to use a diffeomorphism that maps the usual order $\pre$ on $\R^2$ onto our order
$\prec$: we study how the density of the random variables is modified in Step 3, and how this modifies
the limit functional in Step 4.

For $y=(y_1,y_2)$ and $y'=(y'_1,y'_2)$ in 
$\R^2$, we say that $y\prel y'$ if $y_1\leq y_1'$ and $y_2\leq y_2'$. We say that $y \pre y'$ 
if $y\prel y'$ and $y\neq y'$.

\emph{Step 1.} Consider a bounded open subset $O \subset \R^2$ with 
Lipschitz boundary, as well as a probability density $\phi$ on $\R^2$, vanishing outside $O$ and
uniformly continuous on $O$. Consider an i.i.d. family $(Y_i)_{i\geq 1}$ of random variables with 
density $\phi$.
For $N\geq 1$, denote by 
\[
L_N=\sup\{k\geq 1 : \exists \;i_1,\dots,i_k \in \{1,\dots,N\} \hbox{ such that }
Y_{i_1}\pre\dots\pre Y_{i_k}\}. 
\]
Then $\lim_N N^{-1/2}L_N=\sup_{\gamma \in \cA} \cK (\gamma)$ a.s., where 
$\cK(\gamma)=2 \int_0^1 \sqrt{\phi(\gamma(r))\gamma_1'(r)\gamma_2'(r)}\dd r$ and $\cA$ consists of all
$C^1$-maps $\gamma=(\gamma_1,\gamma_2)$ from $[0,1]$ into $\R^2$ such that 
$\gamma_1'(r)\geq 0$ and $\gamma_2'(r)\geq 0$ for all $r\in [0,1]$ (see \cite{ceh}).

\emph{Step 2.} Consider some bounded open $G\subset \R^2$ with Lipschitz boundary. 
Adopt the same notation and conditions
as in Step 1. For each $N\geq1$, set
$$
L_N(G)=\sup\{k\geq 0 : \exists \;i_1,\dots,i_k \in \{1,\dots,N\} \hbox{ such that }
Y_{i_1}\pre\dots\pre Y_{i_k} \hbox{ and } Y_{i_j} \in G \hbox{ for all } j\}. 
$$
Then
$\lim_N N^{-1/2}L_N(G)=\sup_{\gamma \in \cA} \cK_G(\gamma)$ a.s., with
$\cK_G(\gamma)=2 \int_0^1 \sqrt{\phi(\gamma(r))\indiq_{\{\gamma(r)\in G\}}\gamma_1'(r)\gamma_2'(r)}\dd r$.

Indeed, if $c_G=\int_G \phi(y)\dd y=0$, both quantities equal $0$ (because $\phi\equiv 0$ on $G\cap O$ by 
continuity, and $\phi\indiq_G=0$ on $O^c$ by definition). 
Else, $\phi_G=c_G^{-1} \phi\indiq_G$
satisfies the assumptions of Step 1. For each $N\geq 1$, we set $S_N=\{ i \in \{1,\dots,N\} : 
Y_i \in G\}$. Since the law of the sub-sample $(Y_i)_{i \in S_N}$ knowing $|S_N|$ is that of a
family of $|S_N|$ i.i.d. random variables with density $\phi_G$,
we have $\lim_{N} |S_N|^{-1/2} L_N(G) =
\sup_{\gamma \in \cA} 2 \int_0^1 \sqrt{\phi_G(\gamma(r))\gamma_1'(r)\gamma_2'(r)}\dd r$ a.s.
But $\lim_N N^{-1}|S_N|= c_G$ a.s., whence the conclusion.

\emph{Step 3.} We now introduce the $C^\infty$-diffeomorphism  $\psi(s,x)=
(\rho s- x,\rho s+ x)$ from $\R^2$ into itself.
For all $i\geq 1$, we set $Y_i=\psi(Z_i)$. The density $\phi$ of $Y_1$ is given by 
$\phi(y)=R(\psi^{-1}(y))/(2\rho)$, where we have set 
$R(s,x)=h(s)H(x)\indiq_{\{s\in (a,b),x\in (0,L)\}}$ for all
$(s,x)\in\R^2$. This density $\phi$ satisfies the conditions of Step 1, by continuity of $h$ on $[a,b]$
and of $H$ on $[0,L]$, with $O=\psi((a,b)\times(0,L))$.

We next observe that for any $(s,x),(s',x') \in \R^2$, we have $(s,x)\prec (s',x')$ if and only if 
$\psi(s,x)\pre \psi(s',x')$. 
Hence, by Definition~\ref{A}, we have $A(\pi_N|_B)=L_N(\psi(B))$ (with the notation of Step 2 and the choice 
$Y_i=\psi(Z_i)$).
Clearly, $\psi(B)$ is a bounded open domain of $\R^2$. By Step 2, we thus have
$\lim_N  N^{-1/2}A(\pi_N|_B) =\sup_{\gamma \in \cA} \cK_{\psi(B)}(\gamma)$ a.s.

\emph{Step 4.} It remains to verify that $\sup_{\gamma \in \cA} \cK_{\psi(B)}(\gamma)=\Lambda_B(h)$.
Recall that for $\gamma \in \cA$, 
$$
\cK_{\psi(B)}(\gamma)=\sqrt{\frac2\rho}\int_0^1 \sqrt{R(\psi^{-1}(\gamma(r)))
\indiq_{\{\gamma(r) \in \psi(B)\}}\gamma_1'(r)\gamma_2'(r)} \dd r.
$$

One easily checks that $\gamma \in \cA$ if and only if $\alpha=\psi^{-1}\circ \gamma \in \cC$
and that $\cK_{\psi(B)}(\gamma)=\cL_{B}(\alpha)$, where
$\cC$ is the set of all $C^1$-maps $\alpha: [0,1]\mapsto\R^2$ such that $|\alpha_2'(r)| \leq \rho \alpha_1'(r)$
for all $r\in[0,1]$ and
$$
\cL_B(\alpha)=\sqrt{\frac2\rho}
\int_0^1\sqrt{R(\alpha(r))\indiq_{\{\alpha(r) \in B\}} [\rho^2(\alpha_1'(r))^2- (\alpha_2'(r))^2]}\dd r.
$$

But $\sup_{\alpha \in \mathring{\cC}} \cL_{B}(\alpha)=\sup_{\alpha \in \cC} \cL_{B}(\alpha)$,
where $\mathring{\cC}$ consists of the elements of $\cC$ such that  $|\alpha_2'(r)| < \rho \alpha_1'(r)$
on $[0,1]$. Indeed, it suffices to approximate $\alpha \in \cC$ 
by $\alpha^n(r)=(\alpha_1(r)+r/n ,\alpha_2(r))$,
that belongs to $\mathring{\cC}$, and to observe that $\cL_B(\alpha) \leq \liminf_n \cL_B(\alpha_n)$ 
by the Fatou Lemma and since
$R(\alpha(r))\indiq_{\{\alpha(r) \in B\}} \leq \liminf_n R(\alpha_n(r))\indiq_{\{\alpha_n(r) \in B\}}$ 
for each $r\in[0,1]$,
because $R\indiq_B=R\indiq_{O\cap B}$ with $R$ continuous on the open set $O\cap B$.

Finally, one easily verifies that for $\alpha \in \mathring{\cC}$, the map $\beta=\alpha_2 
\circ \alpha_1^{-1}$ 
(defined on the interval $I_\beta=[\alpha_1(0),\alpha_1(1)]$) belongs to $\cB$, with
furthermore $\cL_B(\alpha)=\cJ_B(h,\beta)$. And for $\beta \in \cB$ (defined on $I_\beta=[a,b]$), the map
$\alpha=(\alpha_1,\alpha_2)$ defined on $[0,1]$ by $\alpha_1(r)=a+r(b-a)$ and $\alpha_2(r)=\beta(a+r(b-a))$ belongs
to $\mathring{\cC}$ and we have $\cL_B(\alpha)=\cJ_B(h,\beta)$.

All in all,
$\sup_{\gamma \in \cA} \cK_{\psi(B)}(\gamma)=\sup_{\alpha \in \cC} \cL_{B}(\alpha)
=\sup_{\alpha \in \mathring{\cC}} \cL_{B}(\alpha)=\sup_{\beta \in \cB} \cJ_{B}(h,\beta)$.
\end{proof}

We can now give the

\begin{proof}[Proof of Lemma~\ref{lcru}.]
Let us explain the main ideas of the proof.
The main tool consists in applying Lemma~\ref{base} in any
reasonable  subset of \([b_k,b_{k+1}]\times[0,L]\), for any $k\geq 0$, which we do in Step 1 for a sufficiently large
family of such subsets. In Step 4, we prove that 
$\Lambda_{\mathring{D}_t}(g)=\lim_{\delta \downarrow 0} \Lambda_{\mathring{D}_{t+\delta}}(g)=\Gamma_t(g)$, which is
very natural but tedious. The lowerbound 
$\liminf_N N^{-1/2}A_t(\nu_N) \geq \Gamma_t(g)$ is proved in Step 2: we consider some
$\beta \in \cB$ such that $\cJ_{\mathring{D}_t}(g,\beta) \geq \Lambda_{\mathring{D}_t}(g)-\e$,
we introduce a tube $B_{\beta,\delta}$ around the path $\{(s,\beta(s)) : s\in [0,t]\}$
and observe that $\cJ_{\mathring{D}_t}(g,\beta) = \cJ_{\mathring{D}_t\cap B_{\beta,\delta}}(g,\beta)$.
Using Step 1, we deduce that in each $B_{\beta,\delta} \cap ([b_k,b_{k+1}]\times [0,L])$, we can find
an increasing subsequence of points with the correct length, that is, more or less,
$N^{1/2} \cJ_{\mathring{D}_t\cap B_{\beta,\delta}\cap ([b_k,b_{k+1}]\times [0,L])}(g,\beta)$. We then concatenate these subsequences
(with a small loss to be sure the concatenation is fully increasing) and find that, very roughly, 
$A_t(\nu_N) \geq N^{1/2} \sum_{k\geq 1} \cJ_{\mathring{D}_t\cap B_{\beta,\delta}\cap ([b_k,b_{k+1}]\times [0,L])}(g,\beta)
\geq \cJ_{\mathring{D}_t}(g,\beta)$
as desired.
The upperbound is more complicated be uses similar ideas: if one could find an increasing subsequence 
with length significantly greater than $N^{1/2} \cJ_{\mathring{D}_t\cap B_{\beta,\delta}\cap ([b_k,b_{k+1}]\times [0,L])}(g,\beta)$,
this would mean that somewhere, in some $[b_k,b_{k+1}]\times[0,L]$, there would be an increasing subsequence 
with length significantly greater than established in Lemma~\ref{base}.

{\it Notation.} Changing the value of $g_k$ on $(b_{k+1},\infty)$ does clearly not modify the definitions of $g$ and of $\nu_N$,
since $T^i_k$ is not taken into account if greater than $b_{k+1}$.
Hence we may (and will) assume that for each $k\geq 0$, $g_k$ is a density, continuous on $[b_k,b_{k+1}+1]$
and vanishing outside $[b_k,b_{k+1}+1]$.

We fix $t>0$ and call $k_0$ the integer such that $t \in [b_{k_0},b_{k_0+1})$. We assume that $k_0\geq 1$, the
situation being much easier when $k_0=0$.

For $\beta \in \cB$ and $\delta>0$, we define $B_{\beta,\delta}=\{(s,x) : s\in I_\beta,
x \in (\beta(s)-\delta,\beta(s)+\delta)\}$.
For $k=0,\dots,k_0$ and $a\geq 0$, we also introduce 
$B_{\beta,a,\delta}^k= B_{\beta,\delta} \cap ((b_k,b_{k+1}-a\delta)\times \R)$,
with the convention that $(x,y)=\emptyset$ if $x\geq y$.
All these sets are open, bounded and have a Lipschitz boundary
because $\beta$ is of class $C^1$.

\emph{Step 1.} For each $k=0,\dots,k_0$, we may apply Lemma~\ref{base}, to the family
$(T^i_k,X_i)_{i\geq 1}$. Introducing $\pi_N^k=\sum_{i=1}^N \delta_{(T^i_k,X_i)}$, we have, 
for any $\beta\in \cB$, any $\delta\in(0,1)$, any $a\geq 0$, a.s.
$$
\lim_N N^{-1/2} A(\pi_N^k|_{B_{\beta,a,\delta}^k\cap \mathring{D}_t})=\Lambda_{B_{\beta,a,\delta}^k\cap \mathring{D}_t}(g_k).
$$
Observing now that $\nu_N|_{B_{\beta,a,\delta}^k} = \pi_N^k|_{B_{\beta,a,\delta}^k}$ 
and $g|_{B_{\beta,a,\delta}^k}=g_k|_{B_{\beta,a,\delta}^k}$, we also have a.s.
\begin{equation*}
\lim_N N^{-1/2} A(\nu_N|_{B_{\beta,a,\delta}^k\cap \mathring{D}_t})=\Lambda_{B_{\beta,a,\delta}^k\cap \mathring{D}_t}(g).
\end{equation*}

\emph{Step 2. Lowerbound.} Here we prove that a.s., $\liminf_N N^{-1/2}A_t(\nu_N) \geq \Lambda_{\mathring{D}_t}(g)$.
For $\e\in (0,1)$,
we can find $\beta \in \cB$ such that $\cJ_{\mathring{D}_t}(g,\beta) \geq \Lambda_{\mathring{D}_t}(g)-\e$.
Let $\eta\in (0,1)$ be such that $\sup_{I_\beta}|\beta'|\leq (1-\eta)\rho$ and let 
$a=2/(\rho\eta)$.

We first claim that
\begin{equation}\label{oobb}
\hbox{$0\leq k < \ell \leq k_0$ and $(s,x)\in B_{\beta,a,\delta}^k$ and $(s',x')\in B_{\beta,a,\delta}^\ell$
imply that $(s,x)\prec (s',x')$.}
\end{equation}
It suffices to check that for any $(s,x),(s',x')\in B_{\beta,\delta}$ with $s'\geq s+a\delta$, 
we have $(s,x)\prec (s',x')$. This follows from the facts that $|x-\beta(s)|< \delta$,  
$|x'-\beta(s')|< \delta$ and $|\beta(s)-\beta(s')|\leq (1-\eta)\rho(s'-s)$,
whence $|x-x'|<2\delta+(1-\eta)\rho(s'-s) \leq \rho (s'-s)$, because $2\delta \leq 2(s'-s)/a =\rho \eta (s'-s)$.

Hence a.s.,  $A_t(\nu_N)=A(\nu_N|_{D_t})\geq \sum_{k=0}^{k_0} A(\nu_N|_{B_{\beta,a,\delta}^k \cap \mathring{D}_t})$. 
Indeed, it suffices to recall Definition~\ref{A}, to call $S_N$ the set of points in the support of
$\nu_N$ intersected with $\mathring{D}_t$, 
and to observe that thanks to \eqref{oobb}, the concatenation of the longest
increasing (for $\prec$) subsequence of $S_N\cap B_{\beta,a,\delta}^0$ with the longest increasing subsequence of 
$S_N\cap B_{\beta,a,\delta}^1$ ... with the longest increasing subsequence of $S_N\cap B_{\beta,a,\delta}^{k_0}$
indeed produces an increasing subsequence of $S_N$.

Due to Step 1, we conclude that a.s., for all $\delta>0$,
$$
\liminf_N N^{-1/2} A_t(\nu_N)\geq \sum_{k=0}^{k_0} \Lambda_{B_{\beta,a,\delta}^k\cap \mathring{D}_t}(g) \geq 
\sum_{k=0}^{k_0}\cJ_{B_{\beta,a,\delta}^k\cap \mathring{D}_t}(g,\beta).
$$
But for all $k=0,\dots,k_0$, we have 
$(s,\beta(s)) \in B_{\beta,a,\delta}^k$ for all $s\in I_\beta \cap [b_k,(b_{k+1}-a\delta)]$, whence
$$
\liminf_N N^{-1/2} A_t(\nu_N)\geq 
\sum_{k=0}^{k_0}\sqrt{\frac2\rho}\int_{I_\beta \cap(b_k,b_{k+1}-a\delta)}  
\sqrt{g(s)H(\beta(s))\indiq_{\{(s,\beta(s))\in \mathring{D}_t\}}
[\rho^2-(\beta'(s))^2]} \dd s.
$$
Letting $\delta$ decrease to $0$, we find that a.s.,
$$
\liminf_N N^{-1/2} A_t(\nu_N)\geq \sqrt{\frac2\rho}\int_{I_\beta \cap [b_0,b_{k_0+1}]} \sqrt{g(s)H(\beta(s))
\indiq_{\{(s,\beta(s))\in \mathring{D}_t\}}[\rho^2-(\beta'(s))^2]} \dd s =\cJ_{\mathring{D}_t}(g,\beta),
$$
the last inequality following from the facts that $g(s)\indiq_{\{(s,\beta(s)) \in \mathring{D}_t\}}$
vanishes if $s\in [b_0,b_{k_0+1}]^c \subset [b_0,t]^c$.
Recalling the beginning of the step,
$\liminf_N N^{-1/2} A_t(\nu_N)\geq \Lambda_{\mathring{D}_t}(g)-\e$ as desired.

\emph{Step 3. Upperbound.} We next check that a.s., $\limsup_N N^{-1/2}A_t(\nu_N)\leq 
\lim_{\delta \downarrow 0} \Lambda_{\mathring{D}_{t+\delta}}(g)$.
We introduce $\cB_{[0,t]}=\{ \beta \in \cB : I_\beta =[0,t]$ and $\beta([0,t])\subset (0,L)\}$.

\emph{Step 3.1.} Here we prove that for any $\delta\in(0,1)$, there is a finite subset 
 $\cB^\delta_t \subset \cB_{[0,t]}$ such that $A(\nu|_{\mathring{D}_t})
\leq \max_{\beta \in \cB_t^\delta} A(\nu|_{B_{\beta,\delta}\cap \mathring{D}_t})$
for all Radon point measures $\nu$ on $[0,\infty)\times[0,L]$.

For all Radon point measures $\nu$ on $[0,\infty)\times[0,L]$, we have 
$A(\nu|_{\mathring{D}_t})\leq \sup_{\beta \in \cB_{[0,t]}} A(\nu|_{B_{\beta,\delta} \cap {\mathring{D}_t}})$.
Indeed, consider an
increasing subsequence $(t_1,x_1) \prec \dots \prec (t_\ell,x_\ell)$ of points in the support of $\nu$ 
intersected with
$\mathring{D}_t$ such that $\ell=A_t(\nu)$. Consider $\beta_0: [0,t]\mapsto (0,L)$
of which the graph is the broken line linking $(0,x_1)$, $(t_1,x_1)$, $(t_2,x_2)$, ..., $(t_\ell,x_\ell)$ and
$(t,x_\ell)$.
Then $\beta_0$ is $\rho$-Lipschitz continuous (because the points are ordered for 
$\prec$). Hence it is not hard to find $\beta_\delta \in \cB_{[0,t]}$ such that
$\sup_{[0,t]}|\beta_\delta(s)-\beta_0(s)|<\delta$.
And  $\{(t_1,x_1), \dots, (t_\ell,x_\ell)\}\subset B_{\beta_\delta,\delta}$, whence
$A(\nu|_{B_{\beta_\delta,\delta}\cap\mathring{D}_t}) \geq \ell=A(\nu|_{\mathring{D}_t})$.

Next, $\cB_{[0,t]}$ is dense, for the uniform convergence topology, in $\bar \cB_{[0,t]}$,
the set of $\rho$-Lipschitz continuous functions from $[0,t]$ into $[0,L]$.
We thus may write  $\bar \cB_{[0,t]}=\cup_{\beta \in \cB_{[0,t]}} \cV(\beta,\delta)$,
where $\cV(\beta,\delta)=\{\alpha \in \bar \cB_{[0,t]} : \sup_{[0,t]} |\alpha(s)-\beta(s)| < \delta/2\}$.
But $\bar\cB_{[0,t]}$ is compact (still for the uniform convergence topology), so that there is 
a finite subset $\cB^\delta_t \subset \cB_{[0,t]}$ such that 
$\cB_{[0,t]}\subset \bar \cB_{[0,t]}=\cup_{\beta \in \cB^\delta_t}
\cV(\beta,\delta)$. The conclusion follows, using the previous paragraph, since then for any $\nu$,
we have $A(\nu|_{\mathring{D}_t})\leq \sup_{\alpha \in \cB_{[0,t]}} A(\nu|_{B_{\alpha,\delta/2}\cap \mathring{D}_t}) 
\leq \sup_{\beta \in \cB^\delta_t} A(\nu|_{B_{\beta,\delta} \cap \mathring{D}_t})$
because for each $\alpha \in \cB_{[0,t]}$, we can find $\beta \in \cB^\delta_t$ such that
$\sup_{[0,t]} |\alpha(s)-\beta(s)| < \delta/2$, which implies that $B_{\alpha,\delta/2}\subset B_{\beta,\delta}$.

\emph{Step 3.2.} For all $\beta \in \cB_{[0,t]}$, $\delta\in (0,1)$,
$\limsup_N N^{-1/2}A(\nu_N|_{B_{\beta,\delta}\cap\mathring{D}_t}) 
\leq \sum_{k=0}^{k_0} \Lambda_{B_{\beta,0,\delta}^k\cap\mathring{D}_t}(g)$ a.s.

Indeed, recalling that $B_{\beta,\delta}=\cup_{k=0}^{k_0}B_{\beta,0,\delta}^k$ 
(up to a Lebesgue-null set in which our random variables a.s. never fall),
we a.s. have $A(\nu_N|_{B_{\beta,\delta}\cap\mathring{D}_t}) 
\leq \sum_{k=0}^{k_0}A(\nu_N|_{B_{\beta,0,\delta}^k\cap\mathring{D}_t})$,
because the longest increasing sequence of points in the support of $\nu_N$ intercepted 
with $B_{\beta,\delta}\cap\mathring{D}_t$
is less long than the concatenation (for $k=0,\dots,k_0$) of the longest increasing sequences
of points in the support of $\nu_N$ intercepted with $B_{\beta,0,\delta}^k\cap\mathring{D}_t$.
The conclusion follows from Step 1.

\emph{Step 3.3.} Gathering Steps 3.1. and 3.2, we deduce that for all $\delta \in (0,1)$, 
a.s., 
\begin{align*}
\limsup_N N^{-1/2}A_t(\nu_N)=&\limsup_N N^{-1/2}A(\nu_N|_{\mathring{D}_t})\\
\leq& \limsup_N N^{-1/2} \sup_{\beta \in \cB^\delta_t} A(\nu_N|_{B_{\beta,\delta}\cap \mathring{D}_t})\\
=&\sup_{\beta \in \cB^\delta_t} \limsup_N N^{-1/2}A(\nu_N|_{B_{\beta,\delta}\cap \mathring{D}_t})\\
\leq &\sup_{\beta \in \cB^\delta_t} \sum_{k=0}^{k_0} \Lambda_{B_{\beta,0,\delta}^k\cap\mathring{D}_t}(g).
\end{align*}
The first equality is obvious, because all our random variables have densities and thus a.s. never fall in
$D_t\setminus\mathring{D}_t$.
The second equality uses that the set $\cB^\delta_t$ is finite.

\emph{Step 3.4.} We prove here the existence of a function $\varphi:(0,1)\mapsto \R_+$ and $c>0$ such that
$\lim_{\delta\to 0}\varphi(\delta)=0$ and, for all $\beta \in \cB_{[0,t]}$, all $\delta\in(0,1)$,
$$
\sum_{k=0}^{k_0} \Lambda_{B_{\beta,0,\delta}^k\cap\mathring{D}_t}(g) \leq \Lambda_{\mathring{D}_{t+c \delta}}(g)+\varphi(\delta).
$$

Let $\delta \in(0,1)$ and $\beta \in \cB_{[0,t]}$. 
For each $k=0,\dots,k_0$, let $\alpha_k \in \cB$ be such that 
$\cJ_{B_{\beta,0,\delta}^k\cap \mathring{D}_t}(g,\alpha_k)\geq  
\Lambda_{B_{\beta,0,\delta}^k\cap\mathring{D}_t}(g)-\delta$. 
It is tedious but not difficult to check that we can choose 
$\alpha_k$ defined on $I_{\alpha_k}=[b_k,b_{k+1}\land t]$ and such that 
$(s,\alpha_k(s)) \in B_{\beta,0,\delta}^k\cap \mathring{D}_t$ for all $s\in I_{\alpha_k}$.
In particular, $\alpha_{k-1}(b_k) \in (\beta(b_k)-\delta,\beta(b_k)+\delta)$ and
$\alpha_{k}(b_{k}) \in (\beta(b_{k})-\delta,\beta(b_{k})+\delta)$ for each $k=1,\dots,k_0$.

We then define $\alpha$ on $[0,t]$ as the following continuous concatenation
of the functions $\alpha_k$: we put
$\alpha(s)=\alpha_0(s)$ on $[b_0,b_1)$,
$\alpha(s)=\alpha_1(s)+\alpha_0(b_1)-\alpha_1(b_1)$ on $[b_1,b_2)$,
etc, and $\alpha(s)=\alpha_{k_0}(s)+ \sum_{\ell=1}^{k_0} [\alpha_{\ell-1}(b_{\ell})-\alpha_{\ell}(b_\ell)]$ 
on $[b_{k_0},t]$.
The resulting $\alpha$ is $\rho$-Lipschitz continuous on $[0,t]$, satisfies
$\alpha'(s)=\alpha_k'(s)$ for all $k=0,\dots,k_0$ and all $s \in (b_k,b_{k+1}\land t)$ and
$$
\sup_{k=0,\dots,k_0}\sup_{[b_k,b_{k+1}\land t)} |\alpha(s)-\alpha_k(s)|\leq \sum_{\ell=1}^{k_0}
|\alpha_{\ell-1}(b_{\ell})-\alpha_{\ell}(b_\ell)|\leq 2k_0 \delta,
$$
since for each $\ell=1,\dots,k_0$, both $\alpha_{\ell-1}(b_{\ell})$ and 
$\alpha_{\ell}(b_\ell)$ belong to $(\beta(b_\ell)-\delta,\beta(b_\ell)+\delta)$.

Finally, we set $\gamma(s)= [\alpha(s) \land (L-\delta) ] \lor \delta$ for $s\in [0,t]$.
It is $\rho$-Lipschitz continuous, satisfies that 
$|\gamma'(s)|\leq |\alpha'(s)|=|\alpha_k'(s)|$ for all 
$k=0,\dots,k_0$ and almost all $s \in (b_k,b_{k+1}\land t)$ and
\begin{equation}\label{ddii}
\sup_{k=0,\dots,k_0}\sup_{[b_k,b_{k+1}\land t)} |\gamma(s)-\alpha_k(s)|\leq 2k_0 \delta.
\end{equation}
Indeed, it suffices to note that $x \in (0,L)$ and $|y-x|< 2k_0\delta$ imply that
$|[y\land (L-\delta) ] \lor \delta - x|<2k_0\delta$ (apply this principle to $x=\alpha_k(s)$ and
$y=\alpha(s)$).

We have $(s,\gamma(s)) \in \mathring{D}_{t+c\delta}$ for all $s\in [0,t]$, with $c=2k_0/\rho$, because
$\gamma(s) \in (0,L)$ and because for $s\in [b_k,b_{k+1}\land t)$, $\gamma(s) \leq \alpha_k(s)+2k_0\delta <
\rho(t-s)+2k_0\delta= \rho(t+c\delta-s)$. We used that
$(s,\alpha_k(s)) \in \mathring{D}_t$, whence $\alpha_k(s) <\rho(t-s)$.

We thus may write, using that $(s,\alpha_k(s))\in B_{\beta,0,\delta}^k\cap\mathring{D}_t$ for all $k=0,\dots k_0$
and all $s\in [b_k,b_{k+1}\land t)$,
\begin{align*}
\sum_{k=0}^{k_0} \Lambda_{B_{\beta,0,\delta}^k\cap\mathring{D}_t}(g) \leq &
\sum_{k=0}^{k_0} (\cJ_{B_{\beta,0,\delta}^k\cap \mathring{D}_t}(g,\alpha_k)+\delta)\\
=& (k_0+1)\delta+ \sqrt{\frac2\rho}\sum_{k=0}^{k_0} \int_{b_k}^{b_{k+1}\land t} 
\sqrt{H(\alpha_k(s))g(s)[\rho^2-(\alpha_k'(s))^2]} \dd s
\\
\leq & (k_0+1)\delta + \sqrt{\frac2\rho}\sum_{k=0}^{k_0} \int_{b_k}^{b_{k+1}\land t} 
\sqrt{H(\alpha_k(s))g(s)[\rho^2-(\gamma'(s))^2]} \dd s,
\end{align*}
because $|\gamma'(s)|\leq |\alpha_k'(s)|$ for a.e. $s\in [b_k,b_{k+1}\land t)$.
We then set 
$$
\varphi(\delta)=(k_0+1)\delta + \Big(\sqrt{2 \rho} \intot \sqrt{g(s)}\dd s  \Big)
\sup_{x,y \in (0,L),|x-y|\leq 2 k_0 \delta} 
|\sqrt{H(x)} - \sqrt{H(y)}|,
$$
which tends to $0$ as $\delta\to 0$ because $H$ is continuous on $[0,L]$. Recalling \eqref{ddii},
\begin{align*}
\sum_{k=0}^{k_0} \Lambda_{B_{\beta,0,\delta}^k\cap\mathring{D}_t}(g) \leq & \varphi(\delta) 
+ \sqrt{\frac2\rho}\intot
\sqrt{H(\gamma(s))g(s)[\rho^2-(\gamma'(s))^2]} \dd s\\
=& \varphi(\delta) 
+ \sqrt{\frac2\rho}\intot
\sqrt{H(\gamma(s))g(s)\indiq_{\{(s,\gamma(s)) \in \mathring{D}_{t+c\delta}\}}[\rho^2-(\gamma'(s))^2]} \dd s\\
=& \varphi(\delta) + \cJ_{\mathring{D}_{t+c\delta}}(g,\gamma)\\
\leq & \varphi(\delta) + \Lambda_{\mathring{D}_{t+c\delta}}(g).
\end{align*}
There is a little work to prove the last inequality because $\gamma \notin \cB$.
Since $\gamma$ is $\rho$-Lipschitz continuous, it is easily approximated by a family $\gamma_\ell$
of elements of $\cB$ (with $I_{\gamma_\ell}=[0,t]$) in such a way that $\gamma_\ell$ tends to $\gamma$
uniformly and $\gamma_\ell'$ tends to $\gamma'$ a.e. Using that $H$ is continuous, that
$\mathring{D}_{t+c\delta}$ is open and the Fatou lemma, we conclude that
$\cJ_{\mathring{D}_{t+c\delta}}(g,\gamma)\leq \liminf_\ell \cJ_{\mathring{D}_{t+c\delta}}(g,\gamma_\ell) \leq 
\Lambda_{\mathring{D}_{t+c\delta}}(g)$.

\emph{Step 3.5.} Gathering Steps 3.3 and 3.4, we find that for all $\delta \in (0,1)$,
$\limsup_N N^{-1/2}A_t(\nu_N) \leq  \Lambda_{\mathring{D}_{t+c\delta}(g)} 
+\varphi(\delta)$ a.s. Letting $\delta$ decrease to $0$
completes the step.

\emph{Step 4.} Finally, we verify that $\Lambda_{\mathring{D}_t}(g)=\lim_{\delta \downarrow 0} 
\Lambda_{\mathring{D}_{t+\delta}}(g)
=\Gamma_t(g)$ and this will complete the proof. Recall that $\Gamma_t(g)=\sup_{\beta \in \cB_t} \cI_t(g,\beta)$
was introduced in Definition~\ref{dfG}, while $\Lambda_B(g)=\sup_{\beta \in \cB} \cJ_B(g,\beta)$ was defined
in Lemma~\ref{base}. We will verify the four following inequalities, which is sufficient:
$\Gamma_t(g) \leq \Lambda_{\mathring{D}_t}(g)$, $\Lambda_{\mathring{D}_{t}}(g) \leq \Gamma_t(g)$,
$\Lambda_{\mathring{D}_{t}}(g) \leq \lim_{\delta \downarrow 0} \Lambda_{\mathring{D}_{t+\delta}}(g)$
and $\lim_{\delta \downarrow 0} \Gamma_{t+\delta}(g)\leq \Gamma_t(g)$.

We first check that $\Gamma_t(g) \leq \Lambda_{\mathring{D}_t}(g)$. We thus fix $\beta \in \cB_t$.
For $\ell \geq 1$, we introduce
$$
\beta_\ell(s)=\frac{\beta(s)+(t-s)/\ell}{1+2\max\{1/(\rho\ell),t/(L\ell)\}},
$$
which still belongs to $\cB_t$ 
(and thus to $\cB$, with $I_{\beta_\ell}=[0,t]$), 
but additionally satisfies that $(s,\beta_\ell(s)) \in \mathring{D}_t$ (and in particular 
$\beta_\ell(s) \in (0,L)$) for all $s\in (0,t)$, so that $\cI_{t}(g,\beta_\ell)
=\cJ_{ \mathring{D}_t}(g,\beta_\ell) \leq \Lambda_{\mathring{D}_t}(g)$.
And one immediately checks, by dominated convergence and because $H$ is continuous on $[0,L]$, that 
$\cI_{t}(g,\beta)=\lim_\ell \cI_{t}(g,\beta_\ell)$.

We next verify that $\Lambda_{\mathring{D}_{t}}(g) \leq \Gamma_t(g)$.
Fix $\beta \in \cB$, defined on some interval $I_\beta=[x,y]$. We define $\bar \beta$ as the 
restriction/extension
of $\beta$ to $[0,t]$ defined as follows: we set $\bar\beta(s)=\beta(s)$ for $s\in[x\lor 0,y\land t]$,
$\bar\beta(s)=\beta(x\lor 0)$ for $s\in[0,x\lor 0]$ and $\bar\beta(s)=\beta(y\land t)$ for $s\in[y\land t,t]$.
Clearly, $\cJ_{\mathring{D}_t}(g,\beta) \leq \cJ_{ \mathring{D}_t}(g,\bar\beta)$. 
Next, we set $\gamma(s)= (0 \lor \bar\beta (s)) \land L \land (\rho(t-s))$ for all $s\in [0,t]$.
We then have $\cJ_{\mathring{D}_t}(g,\bar\beta) = \cJ_{ \mathring{D}_t}(g,\gamma)$, because 
$\bar \beta(s)=\gamma(s)$ and $\bar \beta'(s)=\gamma'(s)$ 
for all $s\in [0,t]$ such that $(s,\bar \beta(s))\in \mathring{D}_t$
and since $(s,\bar \beta(s))\in \mathring{D}_t$ if and only if $(s,\gamma(s))\in \mathring{D}_t$.
And clearly, $\cJ_{ \mathring{D}_t}(g,\gamma) \leq \cI_t(g,\gamma)$. 
Finally, $\cI_t(g,\gamma) \leq \Gamma_t(g)$, because even if $\gamma \notin \cB_t$,
it is defined from $[0,t]$ into $[0,L]$, vanishes at $t$ and is $\rho$-Lipschitz continuous.
Hence we can find a sequence $(\gamma_\ell)_{\ell\geq 1}$ of elements of $\cB_t$ such that
$\gamma_\ell \to \gamma$ uniformly and $\gamma_\ell'\to\gamma'$ a.e., which is sufficient to ensure
us that $\cI_t(g,\gamma)=\lim_\ell \cI_t(g,\gamma_\ell)$ by dominated convergence.

We obviously have $\Lambda_{\mathring{D}_{t}}(g) \leq \lim_{\delta \downarrow 0} \Lambda_{\mathring{D}_{t+\delta}}(g)$.

It only remains to verify that $\lim_{\delta \downarrow 0} \Gamma_{t+\delta}(g)\leq \Gamma_t(g)$.
For $\beta \in \cB_{t+\delta}$, we introduce the function $\beta_\delta\in \cB_{t}$ defined by
$\beta_\delta(s)=\max\{\beta(s)-\rho\delta,0\}$.
It satisfies
$$
\cI_t(g,\beta_\delta)\geq \sqrt{\frac 2\rho}\intot
\sqrt{H(\beta_\delta(s)) g(s) [\rho^2-(\beta'(s))^2]} \dd s
$$
because $|\beta_\delta'(s)|\leq |\beta'(s)|$ a.e. Hence
$\cI_t(g,\beta_\delta)\geq \cI_{t+\delta}(g,\beta) - \psi_t(\delta)$, where
\begin{align*}
\psi_t(\delta)=& \sqrt{2\rho} \Big(\intot \sqrt{g(s)}\dd s\Big) 
\sup_{x,y\in [0,L],|x-y|\leq \rho \delta} |\sqrt{H(x)}-\sqrt{H(y)}| + \sqrt{2\rho ||H||_\infty}\int_t^{t+\delta}
\sqrt{g(s)}\dd s,
\end{align*}
which tends to $0$ as $\delta\downarrow 0$, because $H$ is continuous and $g$ is locally bounded.
Furthermore, $\beta_\delta$ is $\rho$-Lipschitz continuous,
$[0,L]$-valued, and $\beta_\delta(t)=0$ (because $\beta(t+\delta)=0$, whence $\beta(t)\leq \delta$
since $\beta$ is $\rho$-Lipschitz continuous). 
Hence, as a few lines above, we can approximate $\beta_\delta$ by a sequence of elements
of $\cB_t$ and deduce that $\Gamma_t(g)\geq \cI_t(\beta_\delta)$.
Consequently, for all $\beta \in \cB_{t+\delta}$, we have
$\cI_{t+\delta}(\beta) \leq \cI_t(\beta_\delta) + \psi_t(\delta) \leq \Gamma_t(g) + \psi_t(\delta)$,
whence $\Gamma_{t+\delta}(g) \leq \Gamma_t(g) + \psi_t(\delta)$.
\end{proof}

\section{The hard model}\label{ph}

We first give the

\begin{proof}[Proof of Proposition~\ref{debil1}]
Let $f_0\in\cP([v_{min},v_{max}))$ and $r:[0,\infty)\mapsto \R_+$, continuous, non-decreasing and such 
that $r_0=0$.
Consider $V_0\sim f_0$. The process $(V^r_t,J^r_t)_{t\geq 0}$ can be built as follows (and is unique 
because there
is no choice in the construction): set $Z^0_t=V_0+I t+r_t$ (for all $t\geq 0$) 
and $S_0=\inf\{ t \geq 0 : Z^0_t= v_{max}\}$, 
which is positive and finite, put $V^r_t=Z^0_t$ and $J^r_t=0$ for $t \in [0,S_0)$; 
set $Z^1_t=v_{min}+I(t-S_0)+(r_t-r_{S_0})$ (for all $t\geq S_0$) and $S_1=\inf\{ t\geq S_0 : 
Z^1_t= v_{max}\}$, 
put $V^r_t=Z_t^1$ and $J^r_t=1$ for $t \in [S_0,S_1)$; 
set $Z^2_t=v_{min}+I(t-S_1)+(r_t-r_{S_1})$ (for all $t\geq S_1$) and $S_2=\inf\{t\geq S_1 : 
Z^2_t= v_{max}\}$, 
put $V^r_t=Z_t^2$ and $J^r_t=2$ for $t \in [S_1,S_2)$, etc.

Observe that
\begin{equation}\label{conc}
V^r_t= v_{min}+ (v_{max}-v_{min})\Big\{\frac{V_0+I t+r_t-v_{min}}{v_{max}-v_{min}}\Big\}
\quad \hbox{and}\quad J_t^r=\Big\lfloor\frac{V_0+I t+r_t-v_{min}}{v_{max}-v_{min}}\Big\rfloor
\end{equation}
for all $t\geq 0$, where $\lfloor x \rfloor$ and $\{ x \}$ stand for the integer and 
fractional part of $x \in [0,\infty)$.
\end{proof}

We next handle the

\begin{proof}[Proof of Proposition~\ref{prel1}]
We recall that a non-decreasing $C^1$-function $r:[0,\infty)\mapsto {[0,\infty)}$
with $r_0=0$ is fixed, as well as the density $f_0$ on $[v_{min},v_{max}]$ of $V_0$,
and that $V_t^r=V_0+It+r_t+(v_{min}-v_{max})J^r_t$, where 
$J^r_t=\sum_{s\leq t} \indiq_{\{V^r_\sm=v_{max}\}}$.
We recall that the increasing sequence $(a_k)_{k\geq 0}$ is defined by 
$Ia_k+r_{a_k}=k(v_{max}-v_{min})$.

\emph{Step 1.} We first observe that for all $k\geq 0$, $V^r_{a_k}=V_0$. This is immediate from
\eqref{conc}, since 
$$
V^r_{a_k}=v_{min} + (v_{max}-v_{min})\Big\{\frac{V_0+k(v_{max}-v_{min})-v_{min}}{v_{max}-v_{min}}\Big\}
=v_{min} + (v_{max}-v_{min})\Big\{\frac{V_0-v_{min}}{v_{max}-v_{min}}\Big\},
$$
which equals $V_0$ since because $V_0 \in [v_{min},v_{max})$ a.s.

\emph{Step 2.} We denote by $0\leq S_0<S_1<S_2<\dots$ 
the instants of jump of $(J^r_t)_{t\geq 0}$ (so that $S_k$ is the $(k+1)$-th 
instant of jump).
Here we prove by induction that for all $k\geq 1$, $S_k$ a.s. belongs to $[a_{k},a_{k+1}]$ 
and that its law has the density
$$h_{k}(s)= f_0(k(v_{max}-v_{min})+v_{max}-I s-r_s)(I+r'_s)\indiq_{\{s\in [a_k,a_{k+1}]\}}.$$

To this end, we introduce the function $m(t)=It+r_t$ (which increases from 
$[0,\infty)$ into itself), and its inverse function
$m^{-1}:[0,\infty)\mapsto [0,\infty)$. We have $m(a_k)=k(v_{max}-v_{min})$ for all $k\geq 0$.

First, we have $v_{max}=V^r_{S_0-}=V_0+IS_0+r_{S_0}=V_0+m(S_0)$, whence $S_0=m^{-1}(v_{max}-V_0)$.
But $m^{-1}$ is increasing and $V_0 \geq v_{min}$, so that $S_0 \leq m^{-1}(v_{max}-v_{min})=a_1$.
Thus $S_0 \in [a_0,a_1]$ a.s. (recall that $a_0=0$) and a simple computation shows that its density is given by
$h_0(s)=f_0(v_{max}-m(s))m'(s)\indiq_{\{s\in [a_0,a_1]\}}$ as desired.

We next fix $k\geq 1$ and assume that $S_{k-1} \in [a_{k-1},a_{k}]$ a.s. Then we write
$v_{max}=V^r_{S_{k}-}=v_{min}+ I(S_{k}-S_{k-1})+r_{S_{k}}-r_{S_{k-1}}=v_{min}
+m(S_{k})-m(S_{k-1})$, so that $m(S_{k}) = m(S_{k-1})+v_{max}-v_{min}$.
Using that $m(S_{k-1})\in [m(a_{k-1}),m(a_{k})]=[(k-1)(v_{max}-v_{min}),k(v_{max}-v_{min})]$, 
we conclude that $m(S_{k})$ belongs to $[k(v_{max}-v_{min}),(k+1)(v_{max}-v_{min})]$, 
which precisely means that $S_{k} \in [a_{k},a_{k+1}]$. 

We now write $v_{max}=V_{S_k-}^r=V^r_{a_k}+I (S_k-a_{k})+ r_{S_k}-r_{a_{k}}=V^r_{a_k}+m(S_k)-m(a_{k}) = 
V_0+m(S_k)-k(v_{max}-v_{min})$ by Step 1,
whence $S_k=m^{-1}(v_{max}+k(v_{max}-v_{min})-V_0)$, and a computation shows that
the density of $S_k$ is given by $h_k(s)=f_0(k(v_{max}-v_{min})+v_{max}-m(s))m'(s)
\indiq_{\{s\in [a_k,a_{k+1}]\}}$.

\emph{Step 3.} For each $k\geq 0$, $h_k$ is continuous on $[a_k,a_{k+1}]$, since 
$r$ is of class $C^1$ by assumption, since $k(v_{max}-v_{min})+v_{max}-Is-r_s$
takes values, during $[a_k,a_{k+1}]$, in $[v_{min},v_{max}]$ and since $f_0$ is continuous
on $[v_{min},v_{max}]$ by (H1).

\emph{Step 4.} We can apply Lemma~\ref{lcru}, of which all the assumptions are satisfied,
with $b_k=a_k$. Indeed, recalling the statement,
$\nu_N^r=\sum_{i=1}^N \sum_{k\geq 1} \delta_{(T_k^i,X_i)}$, which can be written as
$\nu_N^r= \sum_{i=1}^N \sum_{k\geq 0} \delta_{(S_k^i,X_i)}= \sum_{i=1}^n\sum_{k\geq 0} 
\indiq_{\{S_k^i  \leq b_{k+1}\}}\delta_{(S_k^i,X_i)}$, since
$S_k^i=T^i_{k+1}$
a.s. belongs to $[b_k,b_{k+1}]$. Since the density of $S_k^i$ is nothing but
$h_k$ by Step 2 and since $h_k$ is continuous on $[b_k,b_{k+1}]$ by Step 3,
we conclude that for all $t\geq 0$, 
$\lim_{N \to \infty} N^{-1/2} A_t(\nu_N^r)=\Gamma_t(g_r)$ a.s., 
where $g_r(t)=\sum_{k\geq 0} h_k(t)\indiq_{\{t \in [b_k,b_{k+1})\}}$,
as desired.
\end{proof}

We finally give the

\begin{proof}[Proof of Theorem~\ref{mr1}]
We fix $w>0$.
By (H2) and Remark~\ref{ttip}, $\Gamma_t(g)=\sqrt{2\rho H(0)}\int_0^t\sqrt{g(s)}\dd s$.
We say that $\kappa:[0,\infty)\mapsto[0,\infty)$ is a solution if it is of class $C^1$, non-decreasing,
if $\kappa_0=0$ and if $\kappa_t=w \sqrt{2\rho H(0)}\int_0^t\sqrt{g_\kappa(s)}\dd s$ 
for all $t\geq 0$, $g_\kappa$ being defined in Proposition~\ref{prel1}. 
To be as precise as possible, we indicate in superscript that $(a_k)_{k\geq 0}$ depends on $\kappa$.
For all $k\geq 0$, $a_k^\kappa$ is thus defined by $Ia_k^\kappa+\kappa_{a_k^\kappa}=k(v_{max}-v_{min})$.
We always have $a_0^\kappa=0$.
We recall that $\sigma=\rho H(0) w^2$, that $G_0=\sigma f_0+\sqrt{\sigma^2 f_0^2+ 2 \sigma I  f_0}: 
[v_{min},v_{max}]\mapsto \R_+$ and that 
$\varphi_0(x)=\int_x^{v_{max}}\dd v /[I+G_0(v)] : [v_{min},v_{max}]\mapsto [0,a]$,
where $a=\varphi_0(v_{min})$.

\emph{Step 1.} For any solution $\kappa$, it holds that $a_1^\kappa=a$ and $\kappa_t=v_{max}-It-\varphi_0^{-1}(t)$
on $[0,a]$.

Indeed, we have $\kappa'_t=w\sqrt{2\rho H(0)g_\kappa(t)}
=\sqrt{2\sigma f_0(v_{max}-It-\kappa_t)(I+\kappa'_t)}$ on $[0,a_1^\kappa]$,
from which $\kappa'_t=G_0(v_{max}-It-\kappa_t)$.
Thanks to (H2), $G_0$ is Lipschitz continuous, so that this ODE has a unique solution such that 
$\kappa_0=0$, given by $\kappa_t=v_{max}-It-\varphi_0^{-1}(t)$. We also deduce that necessarily,
$v_{max}-v_{min}=Ia_1^\kappa+\kappa_{a_1^\kappa}=I a_1^\kappa+v_{max}-Ia_1^\kappa-\varphi_0^{-1}(a_1^\kappa)$, i.e. 
$\varphi_0^{-1}(a_1^\kappa)=v_{min}$, whence $a_1^\kappa =a$. 

\emph{Step 2.} For any solution $\kappa$, $a_2^\kappa=2 a$ and 
$\kappa_t=(v_{max}-v_{min})+v_{max}-It-\varphi_0^{-1}(t-a)$ on $[a,2a]$.

Indeed, $\kappa'_t=w\sqrt{2\rho H(0)g_\kappa(t)}
=\sqrt{2\sigma f_0((v_{max}-v_{min})+v_{max}-It-\kappa_t)(I+\kappa'_t)}$ on $[a,a_2^\kappa]$.
This implies that 
$\kappa'_t=G_0((v_{max}-v_{min})+v_{max}-It-\kappa_t)$ on $[a,a_2^\kappa]$.
Since $G_0$ is Lipschitz continuous by (H2), this ODE has a unique solution such that 
$\kappa_a=v_{max}-Ia-\varphi_0^{-1}(a)=v_{max}-v_{min}-Ia$ (we require that $\kappa$ is continuous and $\kappa_{a-}$ 
has been determined in Step 1), which is given by
$\kappa_t=(v_{max}-v_{min})+v_{max}-It-\varphi_0^{-1}(t-a)$
(observe that $\varphi_0^{-1}(0)=v_{max}$).
Also, we deduce that
$a_2^\kappa=2a$, because $2(v_{max}-v_{min})=I a_2^\kappa+\kappa_{a_2^\kappa}=(v_{max}-v_{min})+v_{max}
-\varphi_0^{-1}(a_2^\kappa-a)$, 
i.e. $\varphi_0^{-1}(a_2^\kappa-a)=v_{min}$, whence $a_2^\kappa-a=a$.

\emph{Step 3.} Iterating the procedure, we conclude that for any solution, we have $a_k^\kappa=ka$
for all $k\geq 0$ and $\kappa_t=k(v_{max}-v_{min})+v_{max}-It-\varphi_0^{-1}(t-ka)$ on $[ka,(k+1)a]$.
Thus uniqueness is checked, and we only have to verify that this function is indeed a solution.
It is continuous by construction, it is of course $C^1$ and non-decreasing on each interval $(ka,(k+1)a)$,
because $\kappa_t'= -I-(\varphi_0^{-1})'(t-ka)=G_0(\varphi_0^{-1}(t-ka))\geq 0$.
It is actually $C^1$ on $[0,\infty)$ because for each $k\geq 1$, 
we have $\kappa'_{ka+}=\kappa'_{ka-}$. Indeed, $\kappa'_{ka+}=G_0(\varphi_0^{-1}(0))=G_0(v_{max})$, while 
$\kappa'_{ka-}=G_0(\varphi_0^{-1}(a))=G_0(v_{min})$, and the two values coincide because $f_0(v_{max})=f_0(v_{min})$ 
by (H2).

Finally, we have $\kappa_t=w \sqrt{2\rho H(0)}\int_0^t\sqrt{g_\kappa(s)}\dd s$ for all $t\geq 0$,
since $\kappa $ is continuous, starts from $0$, since $\kappa'_t=w \sqrt{2\rho H(0)}\sqrt{g_\kappa(t)}$ 
for all $t \in \R_+\setminus \{ka : k\geq 1\}$ by construction and since both $\kappa'$ and
$g_\kappa$ are continuous. Recalling the definition of $g_\kappa$, this last assertion easily follows from the 
facts that
$\kappa \in C^1([0,\infty))$, that $f_0$ is continuous on $[v_{min},v_{max}]$, that $f_0(v_{max})=f_0(v_{min})$ and that 
for all $k\geq 1$, 
$a_k=ka$ and $\kappa'_{ka+}=\kappa'_{ka-}$.
\end{proof}

\section{The soft model}\label{ps}

We start with the

\begin{proof}[Proof of Proposition~\ref{debil2}]
The existence of a pathwise unique solution $(V^r_t)_{t \geq 0}$ to \eqref{sde}, 
with values in $[v_{min},\infty)$, is 
classical and relies on the following main arguments (here the continuity of $\lambda$ is not required,
one could assume only that $\lambda: [v_{min},\infty) \mapsto \R_+$ is measurable and locally bounded).

$\bullet$ Extend $F$ to a locally Lipschitz continuous function on $\R$ and $\lambda$ to a locally bounded
function on $\R$. There is obviously \emph{local} existence of a pathwise unique solution to \eqref{sde}.
The only problem is to check non-explosion (i.e. to check that a.s., $\sup_{[0,T]} |V^r_t|<\infty$ 
for all $T>0$).

$\bullet$ Any solution remains in $[v_{min},\infty)$, because (a) $r_t$ is non-decreasing, (b) $F$ is locally 
Lipschitz continuous and $F(v_{min}) \geq 0$ and (c) each jump sends the solution to $v_{min}$.

$\bullet$ Since $F(v) \leq C(1+(v-v_{min}))$
and since all the jumps are negative, any solution $(V^r_t)_{t \geq 0}$ satisfies
$V^r_t \leq V_0 + r_t + C\intot (1+(V^r_s-v_{min}))\dd s$ for all $t\geq 0$, whence, 
$\sup_{[0,T]} (V^r_t-v_{min}) \leq (V_0-v_{min} +CT+r_T)e^{CT}$ by the Gronwall lemma.

$\bullet$ The two previous points prevent us from explosion, so that the pathwise unique solution is global.
Furthermore, $\E[\sup_{[0,T]} (V^r_t-v_{min})^p] <\infty$ because $\E[(V_0-v_{min})^p]<\infty$
by assumption.
\end{proof}

We next give the
\begin{proof}[Proof of Proposition~\ref{prel2}]
We recall that a non-decreasing continuous function $r:[0,\infty)\mapsto 0$
with $r_0=0$ is fixed, as well as the initial distribution $f_0$ on $[v_{min},\infty)$ of $V_0$,
that $(V^r_t)_{t\geq 0}$ is the unique solution to \eqref{sde} and that
$J^r_t=\sum_{s\leq t} \indiq_{\{\Delta V^r_\sm \neq 0 \}}$.

\emph{Step 1.} For $t_0\geq 0$ and $v_0 \geq v_{min}$ we define $(z_{t_0,v_0}(t))_{t\geq t_0}$ as the unique
solution to $z_{t_0,v_0}(t)=v_0+\int_{t_0}^t F(z_{t_0,v_0}(s))\dd s + r_t-r_{t_0}$. It is valued in $[v_{min},\infty)$
because $F(v_{min})\geq 0$ (and $r$ is non-decreasing).
For all $t_0<t_1\leq t$, we have $z_{t_1,v_{min}}(t) \leq z_{t_0,v_{min}}(t)$. This follows from the comparison 
theorem, because $z_{t_1,v_{min}}(t_1)=v_{min}\leq z_{t_0,v_{min}}(t_1)$ and since  
$(z_{t_1,v_{min}}(t))_{t\geq t_1}$ and $(z_{t_0,v_{min}}(t))_{t\geq t_1}$ solve the same Volterra equation (with different initial
conditions).
Also,  since $F(v) \leq C(1+(v-v_{min}))$, we have 
$z_{t_0,v_{0}}(t)-v_{min} \leq [v_0-v_{min}+r_t-r_{t_0}+C(t-t_0)]\exp(C(t-t_0))$ for all $t\geq t_0$, 
all $v_0\geq v_{min}$.

\emph{Step 2.} By (S1), we have $\lambda(v)=0$ on $[v_{min},\alpha]$, with $\alpha>v_{min}$.
We claim that there is an increasing sequence $(a_k)_{k\geq 0}$ such that 
$\lim_k a_k=\infty$ and a.s., for all $k\geq 0$, $J^r_{a_{k+1}}-J^r_{a_k}\in\{0,1\}$.

We introduce the increasing sequence $(a_k)_{k\geq 0}$
defined recursively by $a_0=0$ and, for $k\geq 0$, 
$a_{k+1}=\inf\{t\geq a_k : z_{a_k,v_{min}}(t) \geq \alpha  \} \land (a_{k}+1)$, with the convention that
$\inf \emptyset = \infty$.

Fix $k\geq 0$, denote by $\tau_1<\tau_2$ 
the first and second instants of jump of $V^r$ after $a_k$. If $\tau_1 > a_{k+1}$, then $J^r_{a_{k+1}}-J^r_{a_k}=0$.
Otherwise, we have $V^r_{\tau_1}=v_{min}$ and, during $[\tau_1,\tau_2)$, we have
$V^r_t=z_{\tau_1,v_{min}}(t)$, whence $V^r_t\leq z_{a_k,v_{min}}(t)$ by Step 1 and since $\tau_1\geq a_k$, and thus 
$V^r_t\leq \alpha$ during $[\tau_1,\tau_2\land a_{k+1})$. Thus the rate of jump $\lambda(V^r_t)=0$
during $[\tau_1,\tau_2\land a_{k+1})$,
so that $\tau_2 >a_{k+1}$ and $J^r_{a_{k+1}}-J^r_{a_k}=1$.

It remains to verify that $\lim_k a_k=\infty$.
We fix $\eta\in (0,1)$ such that $v_{min}+C\eta e^{C\eta}\leq (v_{min}+\alpha)/2$ and we set
$\e=(\alpha-v_{min})e^{-C\eta}/2$. We claim that for all $k\geq 0$, we have either $a_{k+1}-a_k\geq \eta$
or $r_{a_{k+1}}-r_{a_k}\geq \e$. Indeed, if $a_{k+1}-a_k < \eta\leq 1$, then 
$a_{k+1}= \inf\{t\geq a_k : z_{a_k,v_{min}}(t) \geq \alpha  \}$, whence
$\alpha=z_{a_k,v_{min}}(a_{k+1})\leq v_{min}+ (r_{a_{k+1}}-r_{a_k} + C(a_{k+1}-a_k))e^{C(a_{k+1}-a_k)}$ by Step 1.
Hence $\alpha \leq v_{min}+ (r_{a_{k+1}}-r_{a_k})e^{C\eta}+C\eta e^{C\eta}\leq (v_{min}+\alpha)/2
+(r_{a_{k+1}}-r_{a_k})e^{C\eta}$,
whence $r_{a_{k+1}}-r_{a_k} \geq \e$.

One easily concludes that $\lim_k a_k=\infty$: if  $a_\infty=\lim_k a_k<\infty$, then there is $k_0$ such that
$a_{k+1}-a_k < \eta$ (and thus  $r_{a_{k+1}}-r_{a_k} \geq \e$) for all $k\geq k_0$, whence
$r_{a_\infty}=\sum_{k\geq 0} (r_{a_{k+1}}-r_{a_k})=\infty$. This is not possible since $r$ is $\R_+$-valued.

\emph{Step 3.} For $k\geq 0$, let $S_k=\inf\{ t \geq a_k : \Delta V^r_t \neq 0 \}=
\inf\{ t \geq a_k : \Delta J^r_t =1\}$. The law of $S_k$ has a continuous density
$g_k$ on $[a_k,\infty)$. 

Since $V^r_t=z_{a_k,V^r_{a_k}}(t)$ during $[a_k,S_k)$ and since $V^r$ jumps, at time $t$, at rate 
$\lambda(V^r_{t-})$, 
we have $\Pro(S_{k}\geq t |V^r_{a_k})=\exp(-\int_{a_k}^t \lambda(z_{a_k,V^r_{a_k}}(s))\dd s)$ 
for $t\geq a_k$, whence 
$$
\Pro(S_{k}\geq t)=\E\Big[\exp\Big(-\int_{a_k}^t \lambda(z_{a_k,V^r_{a_k}}(s))\dd s\Big)\Big].
$$ 
But $t \mapsto \lambda (z_{a_k,V^r_{a_k}}(t))$ is a.s. continuous on $\R_+$.
Furthermore, $\E[\sup_{[a_k,T]}\lambda (z_{a_k,V^r_{a_k}}(t))] <\infty$ for all $T>a_k$:
by (S1) and Step 1, $\sup_{[a_k,T]}\lambda (z_{a_k,V^r_{a_k}}(t))
\leq C(1+\sup_{[0,T]}(z_{a_k,V^r_{a_k}}(t)-v_{min})^p)\leq C(1+[V^r_{a_k}-v_{min}+r_{T}-r_{a_k} +C(T-a_k))e^{C(T-a_k)}]^p)$,
which has a finite expectation because $\E[(V^r_{a_k}-v_{min})^p]<\infty$ by Proposition~\ref{debil2}.
We easily deduce that indeed,
$S_k$ has the continuous density $g_k(t)=\E[\lambda(z_{a_k,V^r_{a_k}}(t))
\exp(-\int_{a_k}^t \lambda(z_{a_k,V^r_{a_k}}(s))\dd s)]$ on $[a_k,\infty)$.

\emph{Step 4.} Setting $h_r(t)=\sum_{k\geq 0} g_k(t)\indiq_{\{t \in [a_k,a_{k+1})\}}$, it holds
that $h_r(t)=\E[\lambda(V^r_t)]$ for a.e. $t\geq 0$.

On the one hand, we have $J^r_t=\intot\int_0^\infty \indiq_{\{u\leq \lambda(V^r_\sm)\}}\pi(\dd s,\dd u)$, so that
$\E[J^r_t]=\intot \E[\lambda(V^r_s)]\dd s$. On the other hand, since $V^r$ has at most one jump in each
time interval $[a_k,a_{k+1})$, one easily checks that $J^r_t=\sum_{k\geq 0} \indiq_{\{S_k \leq t, S_k < a_{k+1}\}}$.
Hence $\E[J^r_t]=\sum_{k\geq 0} \Pro(S_k \leq t \land a_{k+1})$, so that
$\E[J^r_t]=\intot (\sum_{k\geq 0} g_k(s)\indiq_{\{s \leq a_{k+1}\}})\dd s= \intot h_r(s)\dd s$. 
We thus have $\intot \E[\lambda(V^r_s)]\dd s= \intot h_r(s)\dd s$ for all $t\geq 0$, which completes the step.

\emph{Step 5.}
Observe that for $T_1<T_2<\dots$ the successive instants of jump of $(V^r_t)_{t\geq 0}$,
we have 
$$
\sum_{\ell\geq 1} \delta_{T_\ell+\theta}= \sum_{k\geq 1} \delta_{S_k+\theta}\indiq_{\{S_k+\theta<a_{k+1}+\theta\}},
$$
because for each $k\geq1$, $S_k$ is the first instant of jump of $(V^r_t)_{t\geq 0}$ after $a_k$
and since $(V^r_t)_{t\geq 0}$ has at most one jump during $[a_k,a_{k+1})$.

Hence, coming back to the notation of the statement, 
$$
\nu_N^r=\sum_{i=1}^N \sum_{k\geq 1} \delta_{(T_k^i+\theta,X_i)} = \sum_{i=1}^N \sum_{k\geq 0} 
\indiq_{\{S_k^i+\theta < b_{k+1}\}}\delta_{(S_k^i+\theta,X_i)},
$$ 
with $b_k=a_k+\theta$. 
We thus can directly apply Lemma~\ref{lcru} to conclude that indeed,
for any $t\geq 0$, $\lim_{N \to \infty} N^{-1/2} A_t(\nu_N^r)=\Gamma_t(h_r^\theta)$ a.s., where
$h_r^\theta(t)=\sum_{k\geq 0} g_k(t-\theta)\indiq_{\{t \in [a_k+\theta,a_{k+1}+\theta)\}}$ (observe that the density of
$S_k^i+\theta$ is $g_k(t-\theta)\indiq_{\{t\geq \theta\}}= g_k(t-\theta)\indiq_{\{t \geq a_k+\theta\}}$),
whence indeed $h_r^\theta(t)=\E[\lambda(V^r_{t-\theta})]\indiq_{\{t\geq\theta\}}$ by Step 4.
\end{proof}

Before concluding, we need a few preliminaries on the functional $\Gamma$.

\begin{lem}\label{prelg}
For any measurable locally bounded $g,\tg:[0,\infty)\mapsto \R_+$ and all $t\geq 0$, 
$$
\Gamma_t(g) \leq \sqrt{2\rho||H||_\infty}\intot \sqrt{g(s)}\dd s, \qquad
|\Gamma_t(g)-\Gamma_t(\tg)| \leq \sqrt{2\rho||H||_\infty}\intot |\sqrt{g(s)}-\sqrt{\tg(s)}|\dd s,
$$
and, for $0\leq t \leq t+\delta$,
\begin{align*}
0\leq \Gamma_{t+\delta}(g)-\Gamma_{t}(g) \leq &
\sqrt{2\rho} \Big(\intot \sqrt{g(s)}\dd s\Big) \sup_{x,y\in [0,L],|x-y|\leq \rho \delta} |\sqrt{H(x)}-
\sqrt{H(y)}| \\
&+ \sqrt{2\rho ||H||_\infty}\int_t^{t+\delta} \sqrt{g(s)}\dd s.
\end{align*}
\end{lem}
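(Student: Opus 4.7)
The plan is to prove the three inequalities in order, exploiting the variational definition of $\Gamma_t(g)$ as a supremum of $\cI_t(g,\beta)$ over $\beta\in\cB_t$.

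For the first inequality, I would bound the integrand in $\cI_t(g,\beta)$ pointwise. Since $H(\beta(s))\leq\|H\|_\infty$ and $\rho^2-(\beta'(s))^2\leq\rho^2$, every $\beta\in\cB_t$ satisfies $\cI_t(g,\beta)\leq\sqrt{2/\rho}\int_0^t\sqrt{\|H\|_\infty g(s)\rho^2}\,\dd s=\sqrt{2\rho\|H\|_\infty}\int_0^t\sqrt{g(s)}\,\dd s$, and taking the supremum over $\beta$ yields the bound.

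For the second inequality, I would first estimate $|\cI_t(g,\beta)-\cI_t(\tg,\beta)|$ for a fixed $\beta\in\cB_t$, using the identity $\sqrt{H(\beta(s))[\rho^2-(\beta'(s))^2]g(s)}-\sqrt{H(\beta(s))[\rho^2-(\beta'(s))^2]\tg(s)}=\sqrt{H(\beta(s))[\rho^2-(\beta'(s))^2]}\bigl(\sqrt{g(s)}-\sqrt{\tg(s)}\bigr)$. After the same pointwise majoration $H(\beta(s))[\rho^2-(\beta'(s))^2]\leq\rho^2\|H\|_\infty$, one gets $|\cI_t(g,\beta)-\cI_t(\tg,\beta)|\leq\sqrt{2\rho\|H\|_\infty}\int_0^t|\sqrt{g(s)}-\sqrt{\tg(s)}|\,\dd s$. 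Since this bound is uniform in $\beta$, the usual inequality $|\sup_\beta F(\beta)-\sup_\beta G(\beta)|\leq\sup_\beta|F(\beta)-G(\beta)|$ gives the claim.

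For the third inequality, the monotonicity $\Gamma_{t+\delta}(g)\geq\Gamma_t(g)$ follows by extending any $\beta\in\cB_t$ to $[0,t+\delta]$ by $0$ (which is $\rho$-Lipschitz and satisfies the endpoint condition at $t+\delta$), approximating this extension by a sequence in $\cB_{t+\delta}$ with uniformly bounded derivatives, and passing to the limit via dominated convergence, using that the contribution on $[t,t+\delta]$ is nonnegative. For the upper bound, I would recycle exactly the construction from the end of Step~4 in the proof of Lemma~\ref{lcru}: given $\beta\in\cB_{t+\delta}$, set $\beta_\delta(s)=\max(\beta(s)-\rho\delta,0)$ on $[0,t]$. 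Since $\beta$ is $\rho$-Lipschitz with $\beta(t+\delta)=0$, one has $\beta(t)\leq\rho\delta$, hence $\beta_\delta(t)=0$; moreover $\beta_\delta$ is $[0,L]$-valued, $\rho$-Lipschitz with $|\beta_\delta'|\leq|\beta'|$ a.e., and so approximable by elements of $\cB_t$. Splitting the integral $\cI_{t+\delta}(g,\beta)$ into $[0,t]$ and $[t,t+\delta]$, bounding the second piece by $\sqrt{2\rho\|H\|_\infty}\int_t^{t+\delta}\sqrt{g(s)}\,\dd s$ (as in the first inequality) and comparing the integrand on $[0,t]$ with that of $\cI_t(g,\beta_\delta)$ through the elementary estimate $|\sqrt{H(\beta(s))}-\sqrt{H(\beta_\delta(s))}|\leq\sup_{|x-y|\leq\rho\delta}|\sqrt{H(x)}-\sqrt{H(y)}|$ (together with $|\beta_\delta'|\leq|\beta'|$ to control the $[\rho^2-(\beta'(s))^2]$ factor) yields $\cI_{t+\delta}(g,\beta)\leq\cI_t(g,\beta_\delta)+\psi_t(\delta)\leq\Gamma_t(g)+\psi_t(\delta)$ with the announced $\psi_t(\delta)$. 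Taking the supremum over $\beta\in\cB_{t+\delta}$ concludes.

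The only mildly delicate point is the $C^1$ approximation step needed to turn $\beta_\delta$ (merely $\rho$-Lipschitz) into a sequence in $\cB_t$, but Fatou together with the continuity of $H$ handles it exactly as in Step~4 of Lemma~\ref{lcru}, so no new idea is required.
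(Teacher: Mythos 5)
Your proof is correct and follows essentially the same route as the paper's: the first two inequalities come from the pointwise bounds $H(\beta(s))\leq\|H\|_\infty$ and $\rho^2-(\beta'(s))^2\leq\rho^2$ applied uniformly in $\beta\in\cB_t$, and the third reuses verbatim the $\beta_\delta(s)=\max(\beta(s)-\rho\delta,0)$ construction from the end of Step~4 in the proof of Lemma~\ref{lcru} (the paper simply points to that step rather than repeating it). You correctly flag the $C^1$-approximation issue for the Lipschitz truncation and handle it as the paper does.
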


\begin{proof}
The two first inequalities follow from the facts that for all
$\beta \in \cB_t$, we have
$\cI_t(g,\beta)\leq \sqrt{2\rho||H||_\infty}\intot \sqrt{g(s)}\dd s$ and
$|\cI_t(g,\beta)-\cI_t(\tg,\beta)| \leq \sqrt{2\rho||H||_\infty}\intot |\sqrt{g(s)}-\sqrt{\tg(s)}|\dd s$,
both facts being obvious by definition of $\cI_t$, see Definition~\ref{dfG}.
The last one has already been verified at the end of the proof of Lemma~\ref{lcru}.
\end{proof}

We finally provide the

\begin{proof}[Proof of Theorem~\ref{mr2}]
\emph{Point (i)}. First assume that we have 
$\kappa:[0,\infty)\mapsto[0,\infty)$ continuous, non-decreasing, starting from $0$ and such that
$\kappa_t=w \Gamma_t((h_\kappa^\theta(s))_{s\geq 0})$ for all $t\geq 0$, where
$h_\kappa^\theta(t)=\E[\lambda(V^\kappa_{t-\theta})]\indiq_{\{t\geq \theta\}}$, 
$(V^\kappa_t)_{t\geq 0}$ being the unique solution
to \eqref{sde} with $r=\kappa$. Then $(V^\kappa_t)_{t\geq 0}$ is obviously a solution to \eqref{nsde}.
It is $[v_{min},\infty)$-valued and satisfies $\E[\sup_{[0,T]} (V^\kappa_t-v_{min})^p]<\infty$ for all $T>0$
by Proposition~\ref{debil2}, and we indeed have 
$\kappa_t=w \Gamma_t((\E[\lambda(V^\kappa_{s-\theta})]\indiq_{\{s\geq \theta\}})_{s\geq 0})$.

Consider a  $[v_{min},\infty)$-valued solution  $(V_t)_{t\geq 0}$ to \eqref{nsde} such that
$\E[\sup_{[0,T]} (V_t-v_{min})^p]<\infty$ for all $T>0$ (so that $(\E[\lambda(V_{s-\theta})]
\indiq_{\{s\geq \theta\}})_{s\geq 0}$ is locally bounded
thanks to (S1)) and define $\kappa_t=w \Gamma_t((\E[\lambda(V_{s-\theta})]
\indiq_{\{s\geq \theta\}})_{s\geq 0})$, 
which is non-decreasing and continuous by  Lemma~\ref{prelg}
(because $H$ is continuous by (S1)).
Then $(V_t)_{t\geq 0}$ solves \eqref{sde} with $r=\kappa$, so that $(V_t)_{t\geq 0}=(V^\kappa_t)_{t\geq 0}$.
Consequently, $\kappa_t=w \Gamma_t((\E[\lambda(V^\kappa_{s-\theta})]\indiq_{\{s\geq \theta\}})_{s\geq 0})$ for all $t\geq 0$
as desired.

\emph{Point (ii) when $\theta>0$}. We first recall, see Definition~\ref{dfG}, 
that for any $g:[0,\infty)\mapsto \R_+$,
any $t\geq 0$, $\Gamma_t(g)$ actually depends only on $(g(s))_{s\in [0,t]}$. Moreover, $\Gamma_t(g)=0$
if $g(s)=0$ for all $s\in [0,T]$. Consequently, $\kappa_t=w\Gamma_t((\E[\lambda(V_{s-\theta})]
\indiq_{\{s\geq \theta\}})_{s\geq 0})=0$ for all $t\in [0,\theta]$ and \eqref{nsde} rewrites, 
during $[0,\theta]$,
$$
V_t=V_0+ \intot F(V_s)\dd s + \intot\int_0^\infty (v_{min}-V_\sm)\indiq_{\{u\leq \lambda(V_\sm)\}} \pi(\dd s, \dd u).
$$
This equation has a pathwise unique solution, see Proposition~\ref{debil2}, which is furthermore 
$[v_{min},\infty)$-valued and we have $\E[\sup_{[0,\theta]} (V_t-v_{min})^p]<\infty$. This determines 
$\E[\lambda(V_{s})]$ for all $s\in [0,\theta]$, and this quantity is well-defined and bounded, since
$\lambda(v) \leq C(1+(v-v_{min}))^p$ for all $v\geq v_{min}$.

Hence $\kappa_t=w\Gamma_t((\E[\lambda(V_{s-\theta})]\indiq_{\{s\geq \theta\}})_{s\geq 0})$ 
is entirely determined for all $t\in [\theta,2\theta]$. 
It is furthermore non-decreasing and continuous (by Lemma~\ref{prelg}, since $H$ is continuous
and since $\E[\lambda(V_{s})]$ is bounded on $[0,\theta]$). And
\eqref{nsde} rewrites, on $[\theta,2\theta]$,
$$
V_t=V_{\theta}+ \int_\theta^t F(V_s)\dd s + (\kappa_t-\kappa_\theta)
+\int_\theta^t\int_0^\infty (v_{min}-V_\sm)\indiq_{\{u\leq \lambda(V_\sm)\}} \pi(\dd s, \dd u).
$$
This equation has a pathwise unique solution, see Proposition~\ref{debil2}, which is furthermore 
$[v_{min},\infty)$-valued and we have $\E[\sup_{[\theta,2\theta]} (V_t-v_{min})^p]<\infty$.
This determines $\E[\lambda(V_{s})]$ for all $s\in [\theta,2\theta]$ 
(and this quantity is well-defined and bounded).

Hence $\kappa_t=w\Gamma_t((\E[\lambda(V_{s-\theta})]\indiq_{\{s\geq \theta\}})_{s\geq 0})$ 
is entirely determined for all $t\in [2\theta,3\theta]$. 
It is furthermore non-decreasing and continuous. And
\eqref{nsde} rewrites, on $[2\theta,3\theta]$,
$$
V_t=V_{2\theta}+ \int_{2\theta}^t F(V_s)\dd s + (\kappa_t-\kappa_{2\theta})
+\int_{2\theta}^t\int_0^\infty (v_{min}-V_\sm)\indiq_{\{u\leq \lambda(V_\sm)\}} \pi(\dd s, \dd u).
$$
This equation has a pathwise unique solution, see Proposition~\ref{debil2}, etc.

Working recursively on the time intervals $[k\theta,(k+1)\theta]$,
we see that there is a pathwise unique $(V_t)_{t\geq 0}$ solving \eqref{nsde}, it
is $[v_{min},\infty)$-valued and satisfies $\E[\sup_{[0,T]} (V_t-v_{min})^p]<\infty$ for all $T>0$.

\emph{Point (ii) when $\theta=0$ under (S2)}. We fix $T>0$ and work on $[0,T]$. 

First, for any solution $(V_t)_{t\geq 0}$ to \eqref{nsde} such that  $\E[\sup_{[0,T]} (V_t-v_{min})^p]<\infty$, there
exists $M>0$ such that a.s., $\sup_{[0,T]} V_t\leq M$. Indeed, we observe that
$K=\sup_{[0,T]}\Gamma_t(\E[\lambda(V_{s})])_{s\geq 0}) <\infty$ by Lemma~\ref{prelg} and since
$\lambda(v) \leq C(1+(v-v_{min}))^p$.
Hence, $V_t\leq V_0+K+C\intot (1+(V_s-v_{min}))\dd s$ by (S1).
Since $V_0$ is bounded by (S2), the conclusion follows from the Gronwall Lemma.

Next, we prove that for any solution $(V_t)_{t\geq 0}$ to \eqref{nsde}
such that  $\E[\sup_{[0,T]} (V_t-v_{min})^p]<\infty$, there
exists $c>0$ such that $\inf_{[0,T]} \E[\lambda(V_t)]\geq c$. To this end, we consider
$M$ such that a.s., $V_t\in(v_{min},M]$ for all $t\in [0,T]$, we set $K=\max_{[v_{min},M]} \lambda$ and we
introduce
\[
\Omega_T=\{V_0>\alpha\; \hbox{ and } \;\pi([0,T]\times [0,K])=0\}.
\] 
By assumption, $\Pro(\Omega_T)=e^{-KT}f_0((\alpha,\infty))>0$. And on $\Omega_T$, 
$\int_0^T \int_0^\infty V_{\sm}\indiq_{\{u\leq \lambda(V_\sm)\}}\pi(\dd s,\dd u)=0$, since $\lambda(V_\sm)\leq K$ a.s. 
for all $s\in (0,T]$. Consequently, still on $\Omega_T$, 
$$
V_t=V_0+\intot F(V_s)\dd s + w\Gamma_t((\E[\lambda(V_s)])_{s\geq 0}) \geq V_0+\intot F(V_s)\dd s
$$
for all $t\in [0,T]$. Since $V_0>\alpha$ and since $F(\alpha)\geq 0$ by (S2), we conclude that, on $\Omega_T$,
$\inf_{[0,T]} V_t >\alpha$ a.s. The conclusion follows, since $\lambda$ is continuous, increasing and 
strictly positive on $(\alpha,\infty)$: 
$\inf_{[0,T]}\E[\lambda(V_t)] \geq \E[\indiq_{\Omega_T} \lambda(\inf_{[0,T]} V_t)]>0$.

We now prove uniqueness. For two $[v_{min},\infty)$-valued 
solutions $(V_t)_{t\geq 0}$ and $(\tV_t)_{t\geq 0}$  to \eqref{nsde} such 
that  $\E[\sup_{[0,T]} ((V_t-v_{min})^p+(\tV_t-v_{min})^p]<\infty$, we consider $M>0$ such that both $V_t$ 
and $\tV_t$ a.s. belong to $[v_{min},M]$ for all $t\in [0,T]$ and $c>0$ such that both $\E[\lambda(V_t)]\geq c$ and 
$\E[\lambda(\tV_t)]\geq c$ for all $t\in [0,T]$. 
We write $\E[|V_t-\tV_t|]\leq I^1_t+I^2_t+I^3_t$, where 
\begin{gather*}
I^1_t=\E\Big[\intot |F(V_s)-{F}(\tV_s)|\dd s\Big],\qquad
I^2_t= w|\Gamma_t((\E[\lambda(V_s)])_{s\geq 0})  - \Gamma_t((\E[\lambda(\tV_s)])_{s\geq 0})|,\\
I^3_t=\E\Big[\intot\int_0^\infty |(v_{min}-V_\sm)\indiq_{\{u\leq \lambda(V_\sm)\}}-(v_{min}-\tV_\sm)
\indiq_{\{u\leq \lambda(\tV_\sm)\}}|\pi(\dd s,\dd u)\Big].
\end{gather*}
Since $F$ is globally Lipschitz on $[v_{min},M]$ by (S1), 
$I^1_t\leq C \intot\E[|V_s-\tV_s|]\dd s$.
By Lemma~\ref{prelg},  $I^2_t\leq C\intot |\E[\lambda(V_s)]^{1/2}-\E[\lambda(\tV_s)]^{1/2}| \dd s
\leq C \intot |\E[\lambda(V_s)]-\E[\lambda(\tV_s)]| \dd s$, since $x\mapsto \sqrt x$ is Lipschitz continuous
on $[c,\infty)$. Hence $I^2_t\leq C\intot\E[|V_s-\tV_s|]\dd s$, since $\lambda$ is globally Lipschitz on 
$[v_{min},M]$ by (S2).
Finally, for all $t\in [0,T]$,
\begin{align*}
I^3_t \leq & \intot \E\Big[ (\lambda(V_s) \land \lambda(\tV_s))|V_s-\tV_s| 
+ |\lambda(V_s)-\lambda(\tV_s)|((V_s-v_{min})+(\tV_s-v_{min}))   \Big] \dd s\\
\leq & C \intot\E[|V_s-\tV_s|]\dd s,
\end{align*}
since $\lambda$ is bounded and globally Lipschitz on $[v_{min},M]$. All in all,
$\E[|V_t-\tV_t|]\leq C \intot\E[|V_s-\tV_s|]\dd s$, and pathwise uniqueness follows from the Gronwall 
lemma.

To prove existence, we fix $K>0$
and we set $\lambda_K=\lambda(\cdot \land K)$. Using a Picard iteration,
it is not too difficult to prove existence of a $[v_{min}, \infty)$-valued and bounded (by a deterministic constant)
solution $(V^K_t)_{t\in [0,T]}$ to 
$$
V^{K}_t=V_0+\intot F(V^K_s)\dd s + \Gamma_t((\E[\lambda_K(V_s^K)])_{s\geq 0})) +
\intot\int_0^\infty (v_{min}-V^K_\sm)\indiq_{\{u\leq \lambda_K(V_\sm^K)\}}\pi(\dd s,\dd u).
$$
The main steps are as follows: we set $V^{K,0}_t=V_0$ for all $t\in[0,T]$ and, for $k\geq 0$, and 
$t\in[0,T]$, we consider the unique solution to 
$V^{K,k+1}_t=V_0+\intot F(V^{K,k+1}_s)\dd s + \Gamma_t((\E[\lambda_K(V_s^{K,k})])_{s\geq 0})) +
\intot\int_0^\infty (v_{min}-V^{K,k+1}_\sm)\indiq_{\{u\leq \lambda_K(V_\sm^{K,k+1})\}}\pi(\dd s,\dd u)$.
Using that $V_0$ is bounded, that $\lambda_K$ is bounded, that $F(v_{min})\geq 0$ 
and that $F(v)\leq C(1+(v-v_{min}))$,
one easily verifies that $V^{K,k}_t\geq v_{min}$ a.s. for all $t\in [0,T]$ and all $k\geq 0$ and that,
for some constant $M_K>0$, $\sup_{[0,T]}\sup_{k\geq 0} V^{K,k}_t \leq M_K$ a.s.
Then, one easily deduces, as a few lines above, that there is $c_K>0$ such that
$\inf_{[0,T]}\inf_{k\geq 0} \E[\lambda(V^{K,k}_t)] \geq c_K$. The conclusion then follows by classical
arguments using the same computation as in the proof of uniqueness.

We next prove that there is $M>0$ such that a.s., $\sup_{K\geq 1} \sup_{[0,T]} V^K_t\leq M$.
We start from 
\begin{align*}
\E[V^K_t]=&\E[V_0]+ \intot \E[F(V^K_s)]\dd s + \Gamma_t((\E[\lambda_K(V^K_s)])_{s\geq 0})
+\intot \E[(v_{min}-V^K_s)\lambda_K(V^K_s)] \dd s \\
\leq & \E[V_0]+  \intot \Big( C \E[1+(V^K_s-v_{min})] + C \sqrt{\E[\lambda_K(V^K_s)]} +  
\E[(v_{min}-V^K_s)\lambda_K(V^K_s)]\Big) \dd s.
\end{align*}
We used that $F(v)\leq C(1+(v-v_{min}))$ and Lemma~\ref{prelg}.
But, the value of $C$ (not depending on $K$) being allowed to vary, 
\begin{align*}
C \sqrt{\E[\lambda_K(V^K_s)]} + \E[(v_{min}-V^K_s)\lambda_K(V^K_s)] \leq& 
C + C \E[\lambda_K(V^K_s)] - \E[(V^K_s-v_{min})\lambda_K(V^K_s)]\\ 
\leq& C - \E[\lambda_K(V^K_s)].
\end{align*}
For the last inequality, it suffices to note that there is a constant $A>0$ (still denoted by $C$)
such that $\phi_k(v)=C\lambda_K(v)-(v-v_{min})\lambda_K(v) \leq A-\lambda_K(v)$ 
for all $v\in[v_{min},\infty)$. Indeed, $\varphi_K(v)=\phi_K(v)+\lambda_K(v)=(v_{min}+C+1-v)\lambda_K(v)$
is bounded from above on $[v_{min},\infty)$, because 
$\varphi_K(v)\leq 0$ if $v\geq v_{min}+C+1$ and 
$\varphi_K(v)\leq (v_{min}+C+1)\sup_{[0,v_{min}+C+1]} \lambda$ else.

All in all, we have checked that for all $K\geq 1$, all $t\in [0,T]$,
\begin{equation}\label{tip}
\E[V^K_t] \leq  \E[V_0]+  \intot \Big( C \E[1+(V^K_s-v_{min})] - \E[\lambda_K(V^K_s)]\Big) \dd s.
\end{equation}
In particular, $\E[V^K_t] \leq  \E[V_0]+  \intot C \E[1+(V^K_s-v_{min})] \dd s$, whence 
$\sup_{K\geq 1} \sup_{[0,T]}\E[V^K_t]<\infty$ by the Gronwall lemma. But then, we use \eqref{tip} again to write
\[
\int_0^T \E[\lambda_K({V^K_s})] \dd s \leq \E[V_0]- \E[V^K_T]+  C \int_0^T \E[1+(V^K_s-v_{min})]\dd s
\]
Since $- \E[V^K_T] \leq -v_{min}$, we find that
$\sup_{K\geq 1} \int_0^T \E[\lambda_K(V^K_s)] \dd s <\infty$.
By Lemma~\ref{prelg}, we conclude that $D=\sup_{K\geq 1}\sup_{[0,T]}\Gamma_t((\E[\lambda_K(V^K_s)])_{s\geq 0}) <\infty$.
Consequently, for all $K\geq 1$, all $t\in [0,T]$, we have 
$V^K_t \leq V_0 + C \intot (1+(V^K_s-v_{min})) \dd s + D$ (because $F(v)\leq C(1+(v-v_{min}))$).
Using that $V_0$ is bounded and the Gronwall lemma, we deduce that 
there is a deterministic constant $M$ such that a.s., $\sup_{K\geq 1} \sup_{[0,T]} V^K_t\leq M$ as desired.

Finally, we conclude the existence proof: for any $K>M$, we a.s. have $\lambda_K(V^K_t)=\lambda(V^K_t)$
on $[0,T]$, so that $(V^K_t)_{t\in [0,T]}$ solves \eqref{nsde}. 
Furthermore,  $(V^K_t)_{t\in [0,T]}$ is $[v_{min},\infty)$-valued and bounded, whence  \emph{a fortiori}
$\E[\sup_{[0,T]}(V^K_t-v_{min})^p]<\infty$.
\end{proof}

\section{On stationary solutions for the limit soft model}\label{id}

The goal of this section is to show, with the help of some numerical computations, that, depending on the
parameters, there may generically be $1$ or $3$ stationary solutions for the limit soft model
(and sometimes $2$ in some critical cases).
In the whole section, we assume that $F(v)=I-v$ for some $I>0$. We also assume for simplicity that
$\theta=0$ (no delay), that $v_{min}=0$ and that $H(0)=\max_{[0,1]}H$, so that
the nonlinear SDE \eqref{nsde} rewrites
\begin{equation}\label{snsde}
V_t=V_0 + \intot \big(I - V_s + \sqrt{\gamma \E[\lambda(V_t)]} \big) \dd s
- \intot \int_0^\infty V_{s-}\indiq_{\{u\leq \lambda(V_\sm)\}}\pi(\dd s,\dd u),
\end{equation} 
where $\gamma=2\rho H(0)w^2>0$.
Finally, although such an explicit form is necessary only at the end of the section, 
we assume that $\lambda(v)=(v-\alpha)_+^p$ for some $\alpha > 0$
and some $p\in \N_*$.
Assumptions (S1) and (S2) are satisfied (for a large class of initial conditions) if $I\geq \alpha$, but we may
also study stationary solutions when $I \in (0,\alpha)$.

\begin{defin}\label{did}
We say that $g \in \cP([0,\infty))$ is an invariant distribution for \eqref{snsde} if, setting 
$m=\int_0^\infty \lambda (v) g(\dd v)$ and $a=I+\sqrt{\gamma m}$, the solution $(V^{a}_t)_{t\geq 0}$ to 
\begin{equation}\label{lsde}
V_t^a=V_0 + \intot \big(a - V^a_s ) \dd s
- \intot \int_0^\infty V^a_{s-}\indiq_{\{u\leq \lambda(V^a_\sm)\}}\pi(\dd s,\dd u)
\end{equation}
starting from some $g$-distributed $V_0$ is such that $\cL(V_t^a)=g$ for all $t\geq 0$.
\end{defin}

For $a>0$, we define the constant 
$$
K_a = \int_0^a \frac{1}{v-a}\exp\Big(-\int_0^v \frac{\lambda(x)}{a-x}\dd x\Big) \dd v.
$$
We clearly have $K_a=\infty$ if $a\in (0,\alpha]$, because $\lambda=0$ on $[0,\alpha]$, and
$K_a \in (0,\infty)$ for all $a>\alpha$ (because $\lambda$ is continuous and $\lambda(a)>0$).
As in \cite[Proposition 21]{fl}, we have the following result.

\begin{prop}\label{pid}
Fix $a>0$.
The linear SDE \eqref{lsde} has a pathwise unique solution (for any initial condition $V_0 \geq 0$),
and has a unique invariant probability measure $g_a \in \cP([0,\infty))$, given by
$$
g_a=\delta_a \hbox{ if $a\in(0,\alpha]$ and }
g_a(\dd v)= \frac{1}{K_a (v-a)}\exp\Big(-\int_0^v \frac{\lambda(x)}{a-x}\dd x\Big) \indiq_{\{v\in [0,a)\}}\dd v
\hbox{ if $a>\alpha$.}
$$
Furthermore, we have $\int_0^\infty \lambda (v) g_a(\dd v)=K_a^{-1}$ 
(with the convention that $1/\infty=0$ when $a\in (0,\alpha]$).
\end{prop}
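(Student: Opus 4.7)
The plan is to exploit the piecewise deterministic structure of \eqref{lsde}: between jumps $V^a$ satisfies $\dot V=a-V$ and relaxes monotonically toward $a$, while at rate $\lambda(V^a)$ it is reset to $0$. Pathwise existence and uniqueness is standard because the drift is globally Lipschitz, the process stays in $[0,V_0\lor a]$, and $\lambda(V^a_s)$ is therefore bounded on any finite interval, ruling out explosion.

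When $a\in(0,\alpha]$, the rate $\lambda$ vanishes on $[0,\alpha]$, so $V^a_t\equiv a$ is a solution and $\delta_a$ is invariant. For uniqueness, I would argue that from any $V_0\geq 0$ one has $V^a_t\to a$ almost surely: if $V_0\leq \alpha$ there are no jumps and the ODE drives $V^a_t$ to $a$; if $V_0>\alpha$, the trajectory either drifts into $[0,\alpha]$ in finite time without jumping, or undergoes a jump to $0\leq \alpha$ first, and in either case no subsequent jump occurs. This a.s. convergence forces $\delta_a$ to be the unique invariant law.

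When $a>\alpha$, I would look for an invariant density $g$ supported in $[0,a)$ by solving $\cL^* g=0$, where $\cL f(v)=(a-v)f'(v)+\lambda(v)(f(0)-f(v))$ is the generator of \eqref{lsde}. Testing $\int \cL f\,g\,\dd v=0$ against smooth compactly supported $f$ and integrating by parts produces both the first order ODE
\[
(a-v)g'(v)=\bigl(1-\lambda(v)\bigr)g(v)\quad\text{on }(0,a),
\]
and the flux balance $a\,g(0)=\int_0^a\lambda(v)g(v)\dd v$ coming from the $f(0)$-terms (the boundary contribution at $v=0$ and the mass deposited at $0$ by jumps). Integrating the ODE and normalizing by $\int_0^a g_a=1$ yield exactly the formula of the statement, with $K_a$ finite because $\lambda(a)>0$ forces $\int^a \lambda(x)/(a-x)\dd x=+\infty$ and the exponential decays fast enough to absorb the $1/(a-v)$ singularity. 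Uniqueness of $g_a$ then follows because any invariant density must solve the same ODE with the same normalization.

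The identity $\int_0^a \lambda(v)g_a(v)\dd v=K_a^{-1}$ is the main explicit computation. Setting $F(v)=\exp\bigl(-\int_0^v \lambda(x)/(a-x)\dd x\bigr)$, one has $F'(v)=-\lambda(v)F(v)/(a-v)$, so $\lambda g_a$ is, up to sign and normalization, exactly $F'$; the integral then telescopes to $(F(0)-F(a))/K_a=1/K_a$, using $F(0)=1$ and $F(a)=0$ by the divergence above. Notably this same telescoping identity independently recovers the flux balance $a\,g_a(0)=\int_0^a\lambda g_a$, thereby closing the invariance proof. The main technical obstacle, as in \cite[Proposition~21]{fl}, is to justify the integration by parts at the singular endpoint $v=a$; this is possible precisely because of the exponential decay that makes $K_a$ finite.
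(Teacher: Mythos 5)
The paper offers no actual proof here; Proposition~\ref{pid} is disposed of with a single sentence referring to \cite[Proposition~21]{fl}, so you are in effect reconstructing an argument rather than matching one. Your reconstruction is the natural one and the computation is correct: pathwise uniqueness is immediate from the locally Lipschitz drift and bounded jump rate, $\delta_a$ is the unique invariant law when $a\le\alpha$ via a.s.\ convergence, and for $a>\alpha$ the adjoint equation $\int\cL f\,g\,\dd v=0$ does decompose (after integrating the term $(a-v)f'g$ by parts and collecting the $f(0)$-terms) into exactly the first-order ODE $(a-v)g'=(1-\lambda)g$ plus the flux balance $ag(0)=\int_0^a\lambda g$, whose solution normalizes to the stated density, and the identity $\int\lambda g_a=K_a^{-1}$ does follow by recognizing $\lambda g_a$ as the total derivative $F'/K_a$ with $F(v)=\exp(-\int_0^v\lambda/(a-x)\,\dd x)$ and using $F(0)=1$, $F(a)=0$.

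There are, however, two genuine gaps in the $a>\alpha$ case, both concerning uniqueness (and implicitly, sufficiency). First, your uniqueness claim ``any invariant density must solve the same ODE with the same normalization'' only disposes of absolutely continuous invariant measures; it does not rule out an invariant measure with an atom (say at $0$, where the jumps accumulate) or a singular-continuous part. To close this one needs either a regularity argument showing every invariant measure of this PDMP has a density on $(0,a)$ and no atoms, or, more cleanly, a regeneration/coupling argument: since from any starting point the process a.s.\ hits $0$ in finite time (the jump rate is eventually bounded below once $V_t$ enters $(\alpha,a)$), the process is regenerative, which forces uniqueness of the stationary law. Second, deriving the ODE from $\int\cL f\,g=0$ for compactly supported test functions gives a \emph{necessary} condition, and checking that the flux balance holds does not by itself ``close the invariance proof'' — one should either invoke the semigroup characterization with a suitably rich core, or verify stationarity directly, e.g.\ via the cycle formula from the regeneration structure, which simultaneously settles both existence and uniqueness. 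Finally, a minor remark: your telescoping step writes $(F(0)-F(a))/K_a$ where the integral $\int_0^a F'$ gives $F(a)-F(0)$; the signs work out only because the paper's displayed definition of $K_a$ (with $1/(v-a)$ rather than $1/(a-v)$) appears to carry a typo, as is visible from the $\int_0^1(1-u)^{-1}\cdots$ rewriting in the proof of the subsequent remark. Worth flagging, but not a flaw in your argument.
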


The conditions are slightly different from those of \cite[Proposition 21]{fl} (mainly because $\alpha=0$ there),
but the extension is straightforward. 

\begin{rk} 
(i) $g\in \cP([0,\infty))$
is an invariant distribution for \eqref{snsde} if and only if there is $a >0$ such that $g=g_a$ and
$\varphi_\gamma(a)=I$, where $\varphi_\gamma(a)=a-\sqrt{\gamma/K_a}$.

(ii) For any fixed $\gamma>0$, the function $\varphi_\gamma$ is continuous on $[0,\infty)$, one has
$\varphi_\gamma(0)=0$ and $\lim_{a\to\infty} \varphi_\gamma(a)=\infty$, so that for any $I>0$, \eqref{snsde}
has at least one invariant distribution $g$, which is non-trivial if $I>\alpha$ 
(because $g=g_a$ for some $a\geq \varphi_\gamma(a)=I>\alpha$).
\end{rk}

\begin{proof}
Point (i) follows from Definition~\ref{did} and Proposition~\ref{pid}. Concerning point (ii), let us first
rewrite, using the substitutions $v=au$ and $x=ay$,
$$
K_a= \int_0^1 \frac 1{1-u} \exp\Big(-\int_0^u \frac{\lambda(a y)}{1-y}\dd y\Big) \dd u.
$$
It is then easy to prove that $a\mapsto K_a$ is continuous (and decreasing) on $(\alpha,\infty)$ and that
$\lim_{a \downarrow \alpha} K_a = \int_0^1 (1-u)^{-1} \dd u = \infty$, so that $a \mapsto K_a^{-1/2}$ 
(and thus $\varphi_\gamma$) is 
continuous on $[0,\infty)$. We obviously have $\varphi_\gamma(0)=0$, while $\lim_\infty \varphi_\gamma=\infty$
follows from the fact that $K_a \geq e^{-\lambda(1)}a^{-1}$ 
for all $a\geq 2$. 
Indeed, since $\lambda$ is non-decreasing
and since $\int_0^{1/a} (1-y)^{-1} \dd y \leq \int_0^{1/2} (1-y)^{-1} \dd y =\log 2 \leq 1$, we find
$$
K_a \geq \int_0^1 \exp\Big(-\lambda(a u) \int_0^u \frac{\dd y}{1-y}\Big) \dd u
\geq \int_0^{1/a}\exp(-\lambda(a u)) \dd u \geq \frac{e^{-\lambda(1)}}a
$$
as desired.
\end{proof}

Concerning the uniqueness/non-uniqueness of this invariant distribution, the theoretical computations
seem quite involved and we did not succeed. We thus decided to compute numerically 
$a\mapsto \varphi_\gamma(a)$ in a few situations.

Let us first compute a little, recalling that $\lambda(v)=(v-\alpha)_+^p$ with $p\in \N_*$. Let us define
$g_p(x)=x+x^2/2+\dots+x^p/p$ and observe that $\int_0^z (1-x)^{-1}x^p \dd x = -\log(1-z)-g_p(z)$ for
all $z \in [0,1)$. Separating the cases $u\leq \alpha/a$ and $u>\alpha/a$ and using, in the latter case,
the substitution $x=(ay-\alpha)/(a-\alpha)$, one verifies that, for all $u\in (0,1)$,
$$
\int_0^u \frac{\lambda(a y)}{1-y}\dd y= - (a-\alpha)^p\Big[\log\Big(\frac a{a-\alpha}(1-u)\Big) 
+ g_p\Big(\frac{au-\alpha}{a-\alpha}\Big)\Big]\indiq_{\{u>\alpha/a\}}.
$$
Then, a few computations (using the substitution $z=(au-\alpha)/(a-\alpha)$) show that, for all $a>\alpha$, 
\begin{align}
K_a =& \int_0^{\alpha/a} \frac{\dd u}{1-u} + \int_{\alpha/a}^1 \frac{\dd u}{1-u}
\Big(\frac a{a-\alpha}(1-u)\Big)^{(a-\alpha)^p} \exp\Big((a-\alpha)^p g_p\Big(\frac{au-\alpha}{a-\alpha}\Big) \Big),
\nonumber\\
K_a=& \log \frac{a}{a-\alpha} + \int_0^1 \frac{\dd z}{1-z} (1-z)^{(a-\alpha)^p} \exp((a-\alpha)^p g_p(z)). \label{ex1}
\end{align}
Naive methods to compute $K_a$ numerically do not work well, because with $a=\alpha$ 
(actually, the problem is when $a-\alpha>0$ is very small), one has to approximate 
$\int_0^1 (1-u)^{-1}\dd u$: a Monte-Carlo method with i.i.d. uniform random variables $U_i$ on $[0,1]$
gives $n ^{-1}\sum_1^n(1-U_i)^{-1}\simeq 15$ when $n=10^6$, while a 
Riemann approximation gives $\sum_1^n n^{-1}(1-(i/n))^{-1}\simeq 14.39$ with $n=10^6$. 
Both values are very far from the true one, which is $\infty$. One possibility is to proceed to the
substitution $z=1-e^{-r/(a-\alpha)^p}$, which gives
\begin{equation}\label{ex2}
K_a=\log \frac{a}{a-\alpha} + \frac 1{(a-\alpha)^{p}}
\int_0^\infty \exp(-r+(a-\alpha)^pg_p(1-e^{-r/(a-\alpha)^p})) \dd r.
\end{equation}
But this expression has other defaults.
The numerical computations below use a Monte-Carlo method based on 
\eqref{ex2} (with exponential random variables with parameter $1$) when $a\in (\alpha,\alpha+1)$ and 
based on \eqref{ex1} 
(with uniform random variables on $[0,1]$) when $a\geq \alpha+1$.

\begin{figure}[h]
\noindent\fbox{
\begin{minipage}{0.9\textwidth}
\centerline{\includegraphics[width=0.5\textwidth]{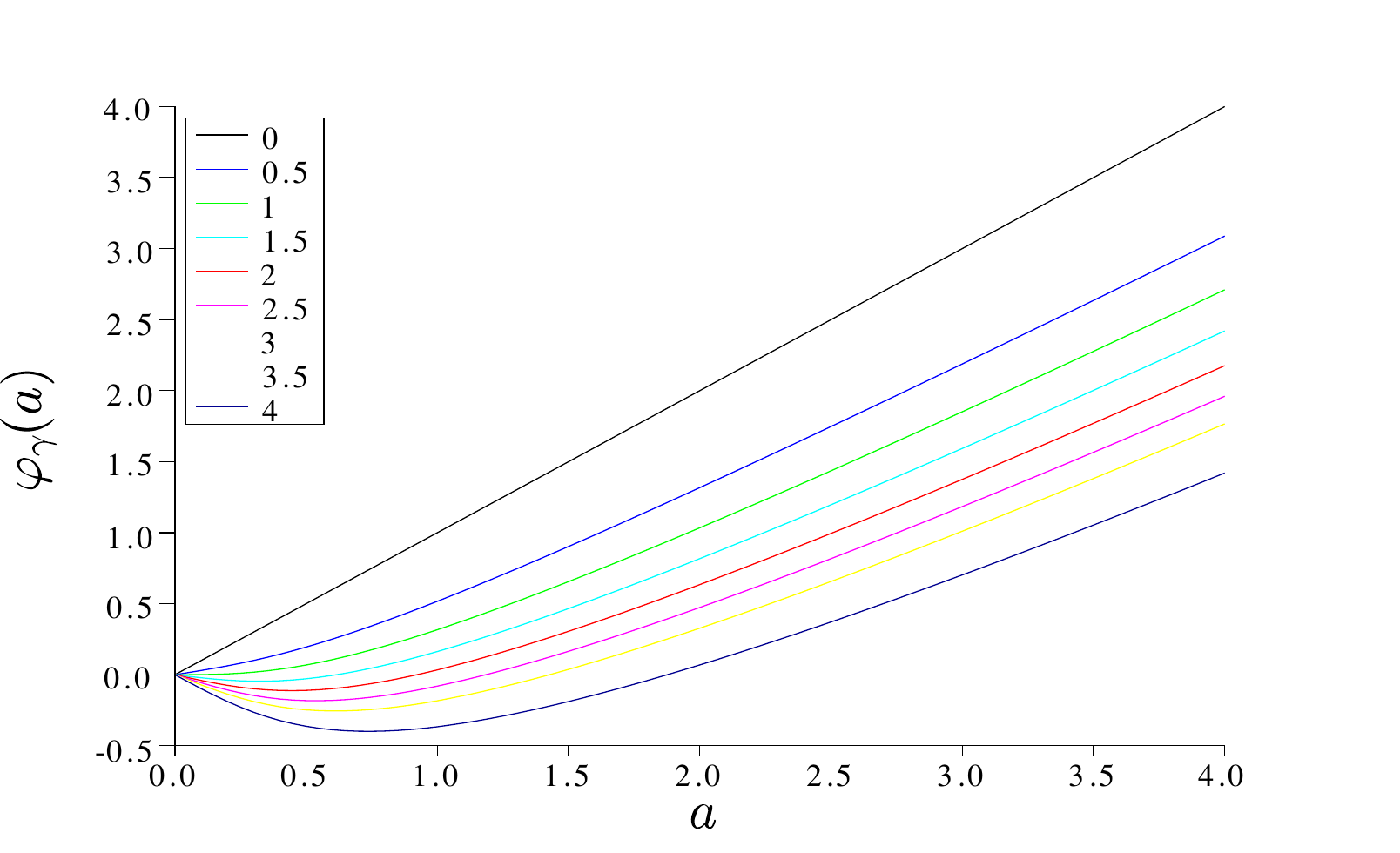} 
\includegraphics[width=0.5\textwidth]{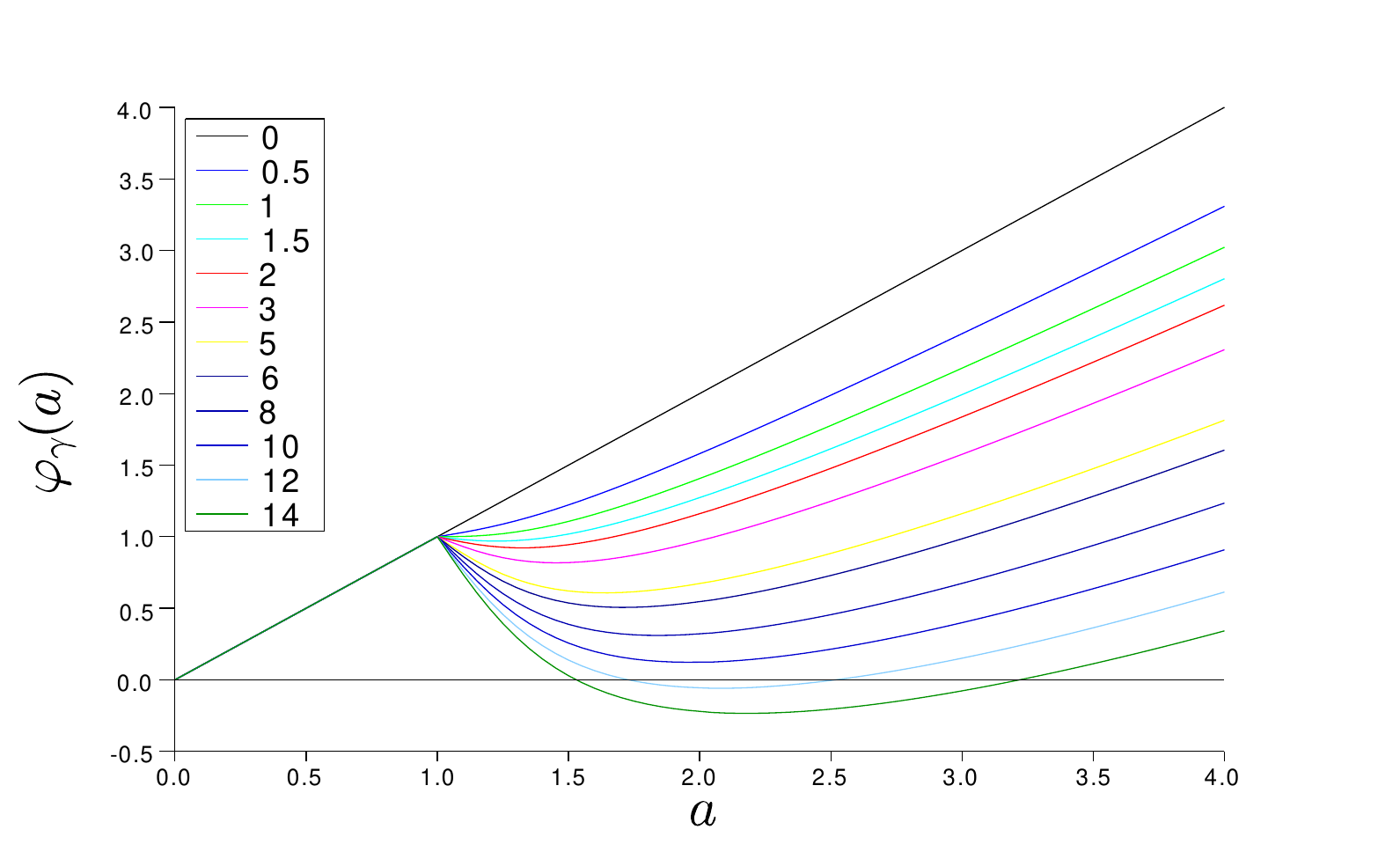}}
\centerline{\includegraphics[width=0.5\textwidth]{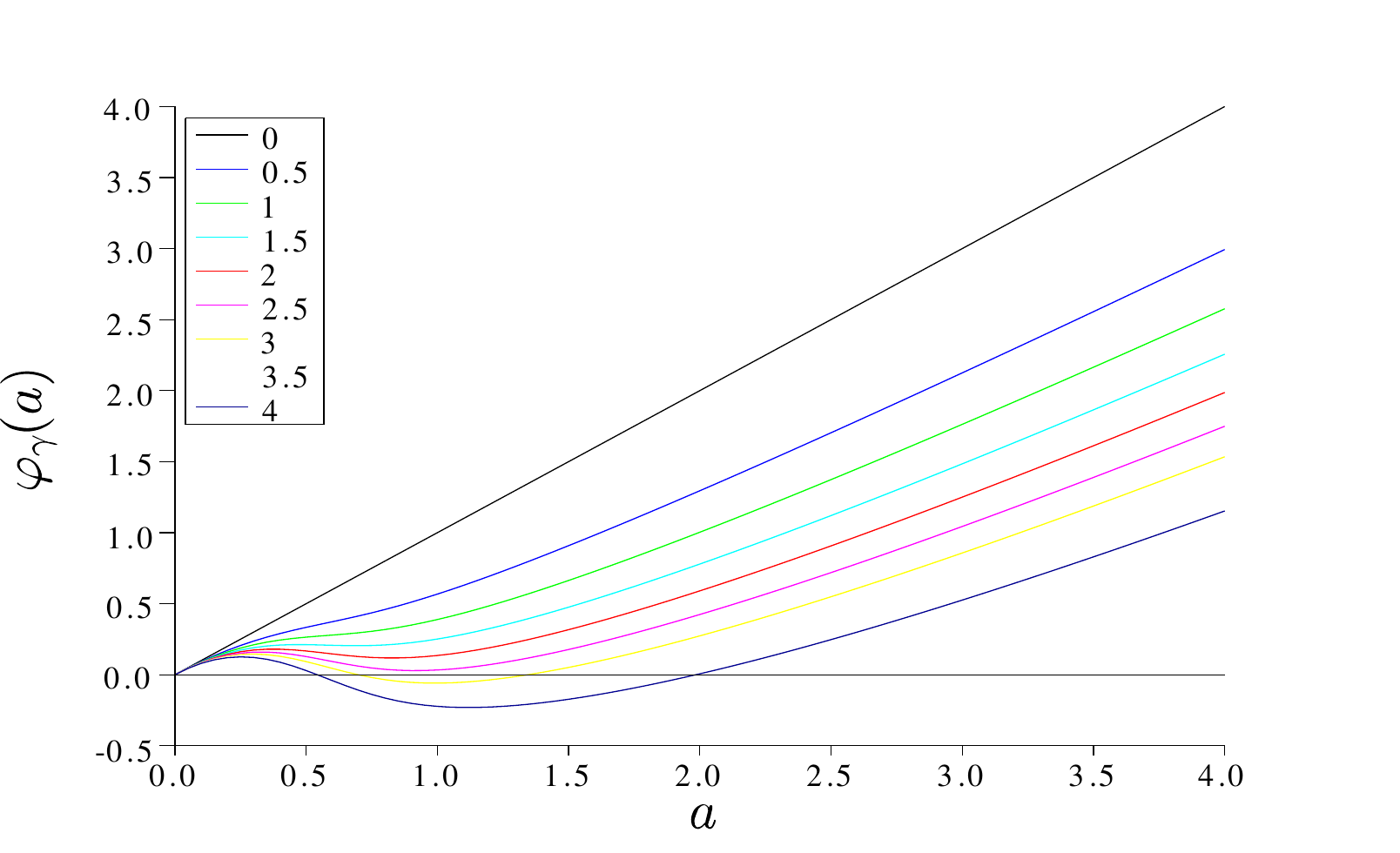} 
\includegraphics[width=0.5\textwidth]{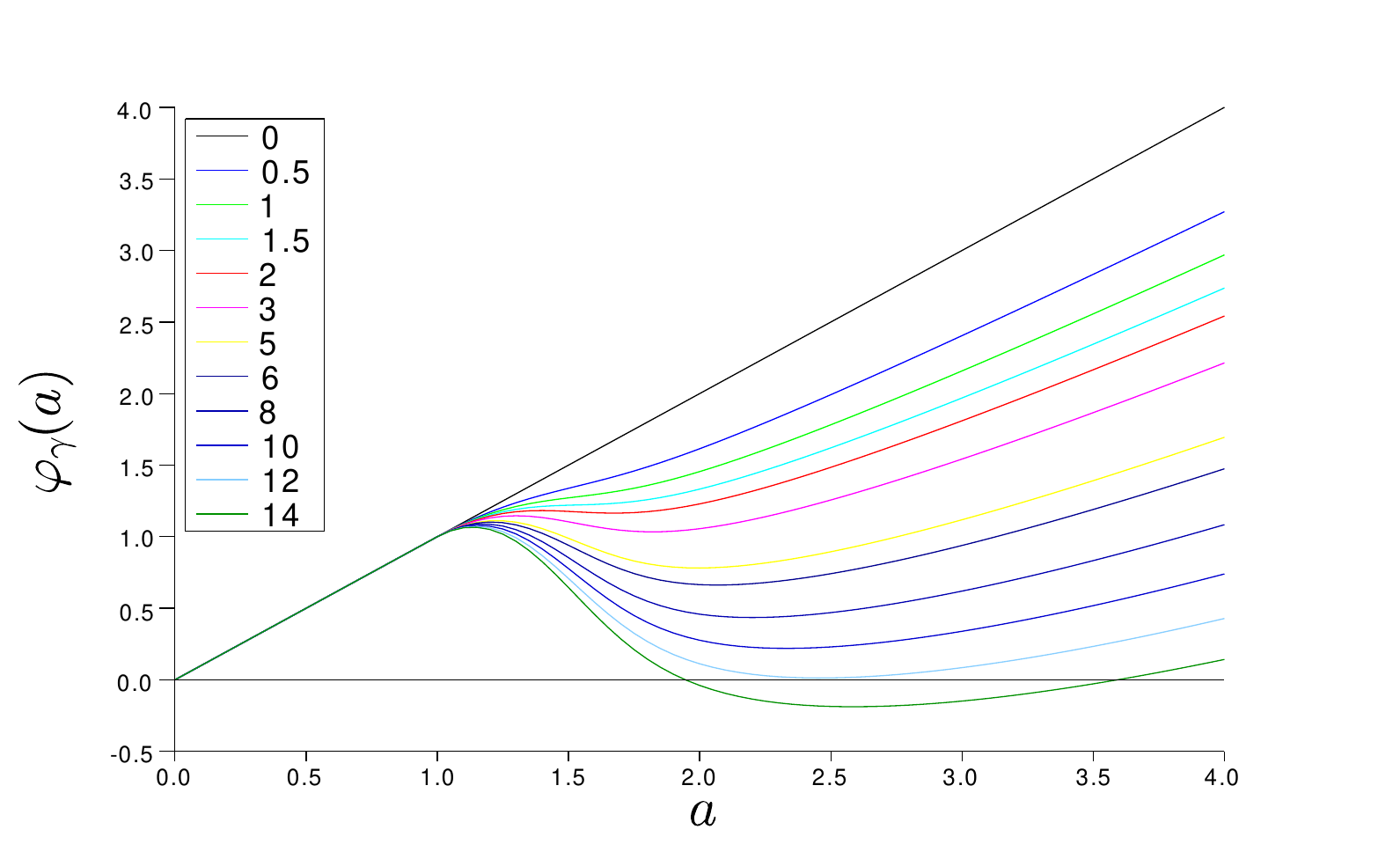} 
}\end{minipage}}
\caption{\small{Plots of $a\mapsto\varphi_\gamma(a)$ for different values of $\gamma$, when
$\lambda(v)=v^2$ (up left),  $\lambda(v)=(v-1)_+^2$ (up right),
$\lambda(v)=v^4$ (down left),  $\lambda(v)=(v-1)_+^4$ (down right),\label{dessid}}}
\end{figure}

Let us comment on Figure~\ref{dessid}. Recall that
for $\gamma>0$ and $I>0$, each stationary solution to \eqref{snsde} corresponds to one solution
$a$ to $\varphi_\gamma(a)=1$. 

$\bullet$ If $\lambda(v)=v^2$, for any $\gamma>0$, the equation $\varphi_\gamma(a)=I$ (with $I>0$ fixed)
seems to have exactly one solution, for all $I>0$.

$\bullet$ If $\lambda(v)=(v-1)_+^2$, $\lambda(v)=v^4$ or $\lambda(v)=(v-1)_+^4$, it seems that there are 
$0<\gamma_1<\gamma_2$ (e.g., $\gamma_1\simeq 1.5$ and $\gamma_2\simeq 12$ when $\lambda(v)=(v-1)_+^4$) such that

(a) if $\gamma \in (0,\gamma_1)$, then for all $I>0$, $\varphi_\gamma(a)=I$ has exactly one solution,

(b) if $\gamma \in (\gamma_1,\gamma_2)$, then there are $0<J_\gamma<I_\gamma$ such that,
if $I \in (0,J_\gamma)$, $\varphi_\gamma(a)=I$ has exactly one solution, 
if $I \in (J_\gamma,I_\gamma)$, $\varphi_\gamma(a)=I$ has exactly three solutions and
if $I>I_\gamma$, $\varphi_\gamma(a)=I$ has exactly one solution,

(c) if  $\gamma \in (\gamma_2,\infty)$, then there is $I_\gamma>0$ such that for all $I \in (0,I_\gamma)$,
$\varphi_\gamma(a)=I$ has exactly three solutions and
if $I>I_\gamma$, $\varphi_\gamma(a)=I$ has exactly one solution (which is nontrivial).

\section{Simulations}\label{num}

In all the simulations below, we have chosen the following values: the minimum potential is $v_{min}=0$, 
the length of the dendrites is $L=1$, the repartition density is $H(x)=2(1-x)$ on $[0,1]$, 
the front velocity is $\rho=1$ and the excitation parameter is $w=1$.
Concerning the particle systems presented in Subsection~\ref{mm}, 
we consider a fully mean-field interaction, i.e. $p_n=1$ and $N=n$.

{
The code we use to simulate the {\it soft} particles system presented in Subsection~\ref{mm}
relies on a rejection method. The only difficulty concerns the treatment of the dendrites, that
we need to incrementally update with new fronts. This is based on the recent algorithm of
Yakupov and Buzdalov \cite{ya}.
}

\subsection{An isolated dendrite with i.i.d. impulses}\label{iso}

We will observe in the next subsections a small temporal shift between the particle system and its mean-field
limit. To explain this phenomenon, we 
consider a single dendrite with length $1$, on which two fronts start from 
each $X_i$ at time $T_i$ (for $i=1,\dots,n$), where the family $(T_i,X_i)_{i\geq 1}$ is i.i.d. with density
$\indiq_{\{t\in[0,1]\}}\dd t H(x)\dd x$. The situation is thus very simple and, as seen in
Proposition~\ref{tac}, $A_t(\sum_{i=1}^n\delta_{(T_i,X_i)})$ represents the number of fronts hitting the soma of the 
dendrite before time $t$.
By Lemma~\ref{lcru}, $Y^n_t=n^{-1/2} A_t(\sum_{i=1}^n\delta_{(T_i,X_i)})$ 
goes to $y_t=\Gamma_t((\indiq_{\{s\in[0,1]\}})_{s\geq 0})$
as $n\to\infty$. By Remark~\ref{ttip}, we have 
$$
y_t= \sqrt{2\rho H(0)} \intot \indiq_{\{s\in[0,1]\}} \dd s = 2 \min(t,1).
$$
We want to show that there is a systematic bias. So, we fix $K=10000$, we simulate $K$ i.i.d. copies
$(Y^{i,n}_t)_{t \in [0,2]}$ of the process $(Y^{n}_t)_{t \in [0,2]}$, for different values of $n$, namely
$n=10000$, $n=40000$ and $n=80000$.
On Figure~\ref{isod}, we plot, as a function of time $t\in [0,2]$, 
the average values $K^{-1}\sum_{i=1}^K Y^{i,n}_t- y_t$.
We observe a systematic negative bias, which remains important for large values of $n$.
For example at time $1$ we have a bias around $-0.14$ (i.e. $7 \%$)
when $n=10000$ and $-0.08$ (i.e. $4 \%$) when $n=80000$.

\begin{figure}[h]
\noindent\fbox{\begin{minipage}{0.9\textwidth}
\centerline{\includegraphics[width=0.7\textwidth]{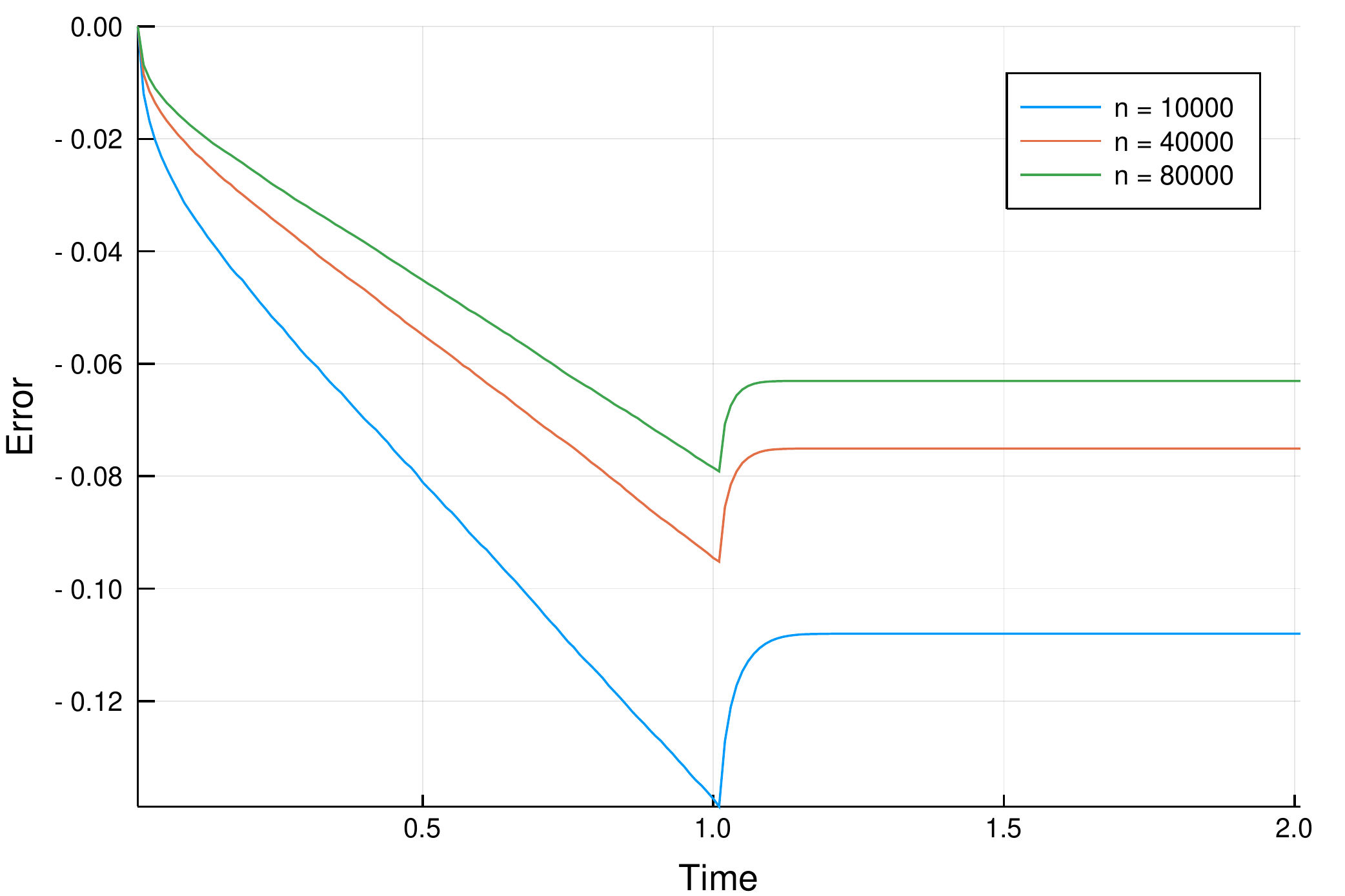} 
}\end{minipage}}
\caption{\small{Isolated dendrite. \label{isod}}}
\end{figure}

We see that a few late fronts arrive after time $1$ (while the limiting value stops increasing at time $1$)
and this slightly makes decrease the bias.

\subsection{The soft model without delay}\label{sss1}

\begin{figure}[h]
\noindent\fbox{\begin{minipage}{0.9\textwidth}
\centerline{\includegraphics[width=0.7\textwidth]{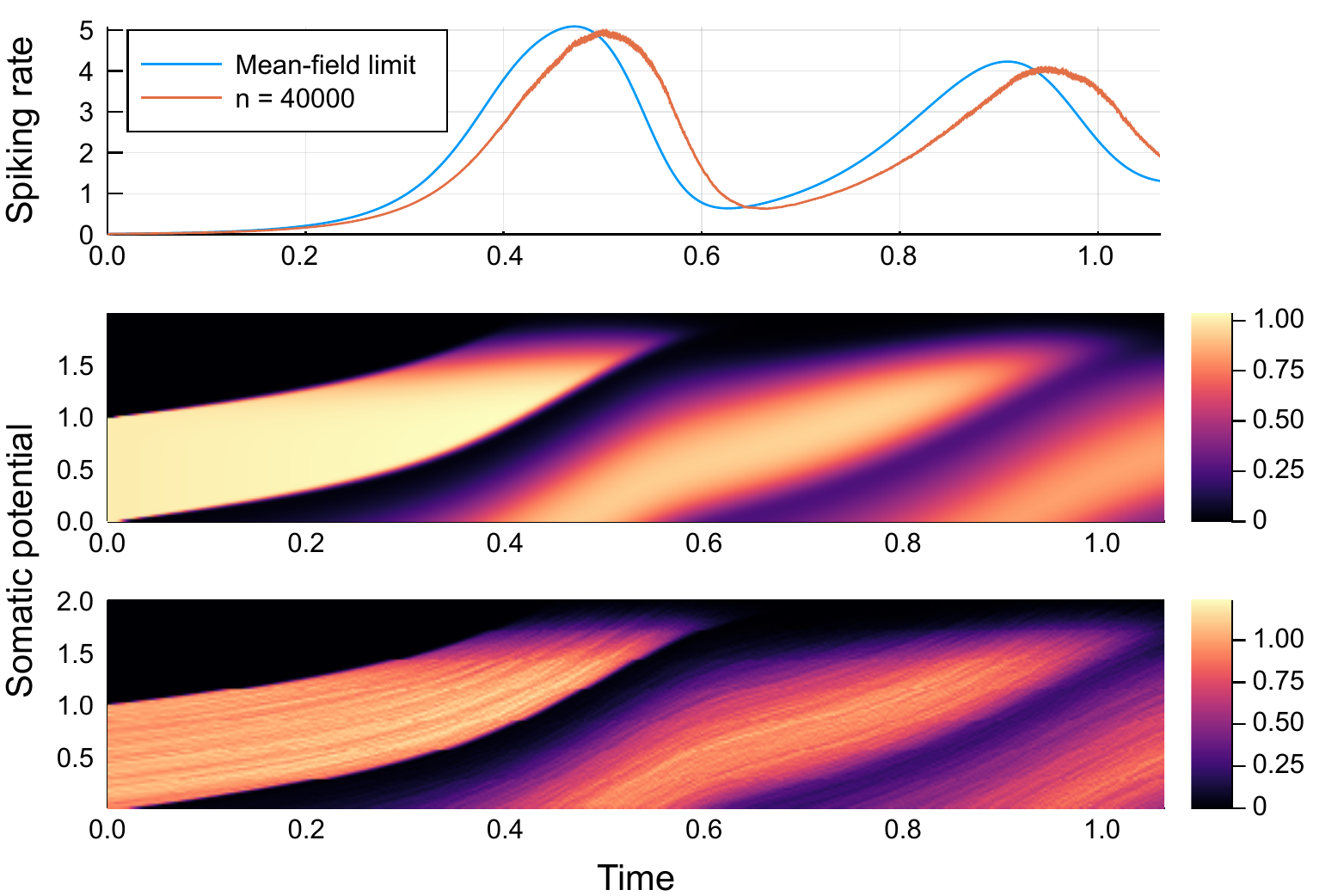} 
\hskip2cm \ref{sim1}.a}\end{minipage}}
\noindent\fbox{\begin{minipage}{0.9\textwidth}
\centerline{\includegraphics[width=0.7\textwidth]{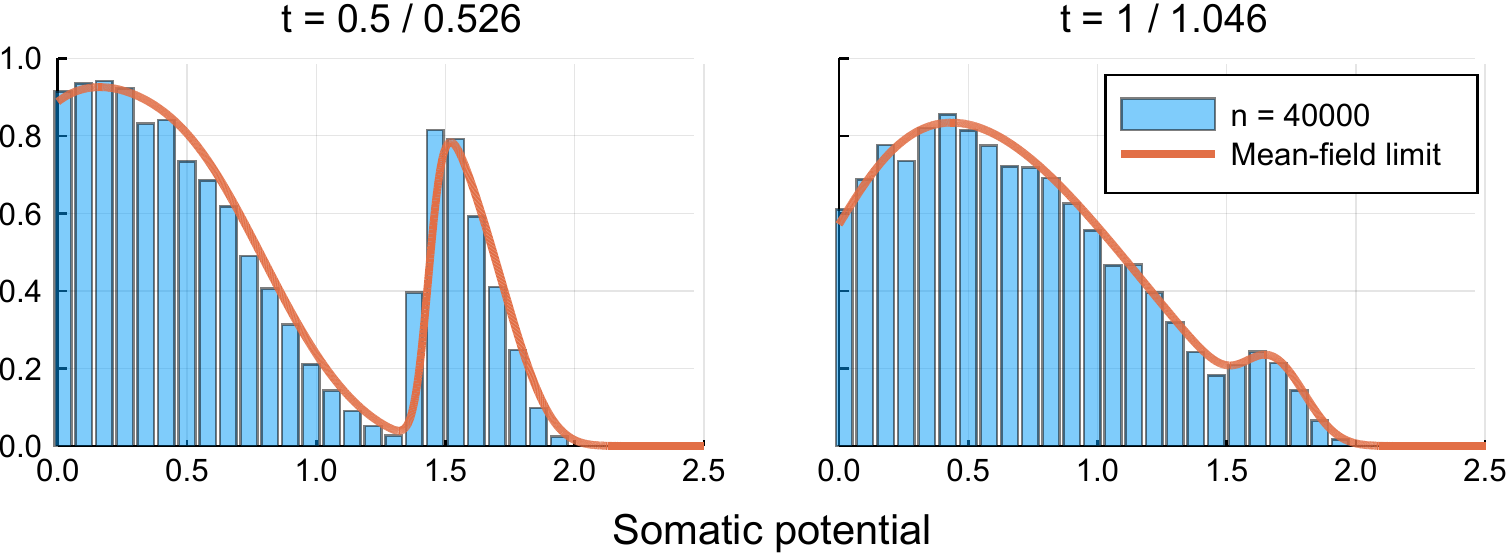} \hskip2cm\ref{sim1}.b}\end{minipage}}
\noindent\fbox{\begin{minipage}{0.9\textwidth}
\centerline{\includegraphics[width=0.7\textwidth]{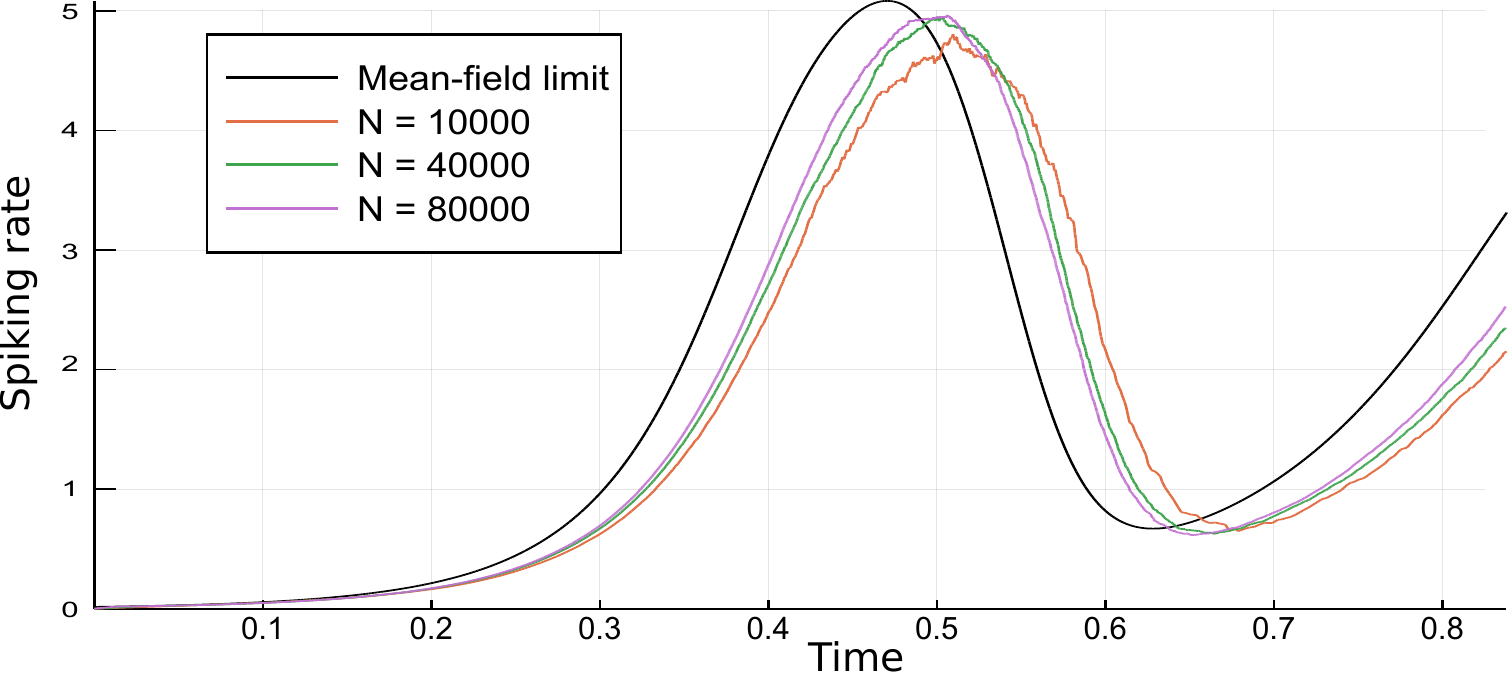}\hskip2cm \ref{sim1}.c}\end{minipage}}
\caption{\small{Soft model, $\theta\!=\!0$, $\lambda(v) \!=\! (v\!-\!0.2)_+^8$, $F(v) \!=\! 1 \!-\! 0.1v$,
{$f_0(v)\!=\indiq_{[0,1]}(v)$}.   \label{sim1}}}
\end{figure}

Here we consider the soft model with the following parameters: the delay is $\theta=0$,
the rate function is $\lambda(v) = \max(v-0.2,0)^8$, the drift function is $F(v)= 1 - 0.1v$
and the initial distribution is $f_0(v)=\indiq_{\{v \in [0,1]\}}$.
On Figure~\ref{sim1}.a, we plot on the first picture the maps $t\mapsto n^{-1}\sum_{i=1}^n \lambda(V^{i,n}_t)$,
for the particle system (soft model) described in Subsection~\ref{mm} with $n=40000$ particles, as well as 
$t\mapsto \E[\lambda(V_t)]$, for $(V_t)_{t\geq 0}$ the unique solution to the nonlinear SDE \eqref{nsde}.
We observe that the two curves are very similar, but there is a small temporal shift. 
This is related to what we explained
in Subsection~\ref{iso}. The second picture represents $(g(t,v))_{t\geq 0,v \geq 0}$,
where $g(t,\cdot)$ is the density of the law of $V_t$. The third picture represents
$(g_n(t,v))_{t\geq 0,v\geq 0}$, still with $n=40000$, where $g_n(t,\cdot)$ is a smooth version of the 
empirical measure $n^{-1}\sum_{i=1}^n \delta_{V^{i,n}_t}$. Here again, the second and third pictures seem rather close,
up to a small temporal shift.
On Figure~\ref{sim1}.b, the first picture represents $v\mapsto g(t,v)$ (with $t=0.5$) and $v\mapsto g_n(t,v)$ 
(with $t=0.526$). So, we took into account the temporal shift to make the histogram fit the continuous curve
as well as possible. The second picture is similar, with $t=1$ and $t=1.046$.
Finally, Figure~\ref{sim1}.c contains a plot of $t\mapsto \E[\lambda(V_t)]$ and of 
$t\mapsto n^{-1}\sum_{i=1}^n \lambda(V^{i,n}_t)$ for different values of $n$. We see that the temporal shift
decreases as $n$ increases, but the convergence seems to be rather slow.

Let us mention that $g(t, v)$ is computed here by solving numerically the PDE associated to the nonlinear SDE 
\eqref{nsde}, using an Euler scheme relying on finite differences in $t$ and in $v$, with a regular grid. 
There is a source term at $v_{min}=0$ involving the integral $\int_0^\infty \lambda(v)g(t,v)\dd v$,
that is incorporated in the spatial finite difference at the extremity $v=0$ of the space-grid.
We take absolute values and normalize at each time step to ensure the 
positivity of the solution and that its total mass equals $1$.
All the figures involving this scheme were compared to a simple interacting 
particle system (see the next subsection)
and we found very similar results.

\subsection{The soft model with delay}\label{sss2}

Here we proceed exactly as in Subsection~\ref{sss1}, with the same parameters, except that the delay $\theta=0.4$.
The results are presented in Figure~\ref{sim2}.
The unpleasant temporal shift is slightly smaller.

Let us mention that we use here a different scheme to approximate the law $g(t,\cdot)$
of $V_t$, based on a simple interacting particle system $(\bar V^{i,K}_t)_{i=1,\dots,K, t\geq 0}$, 
with $K=10^6$ particles. Indeed, the scheme of the previous section was not stable with a nonzero delay. Roughly, each particle solves the same SDE as \eqref{nsde} 
(with i.i.d. initial conditions and driving Poisson
measures), but with the nonlinear term
$\int_0^{(t-\theta)\lor 0} (\gamma \E[\lambda(V_s)])^{1/2}\dd s$ replaced by its empirical version
$\int_0^{(t-\theta)\lor 0} (\gamma K^{-1} \sum_{i=1}^K \lambda(\bar V_s^{i,K}))^{1/2}\dd s$. Of course, we also have 
to proceed to a time discretization.

\begin{figure}[h]
\noindent\fbox{\begin{minipage}{0.9\textwidth}
\centerline{\includegraphics[width=0.7\textwidth]{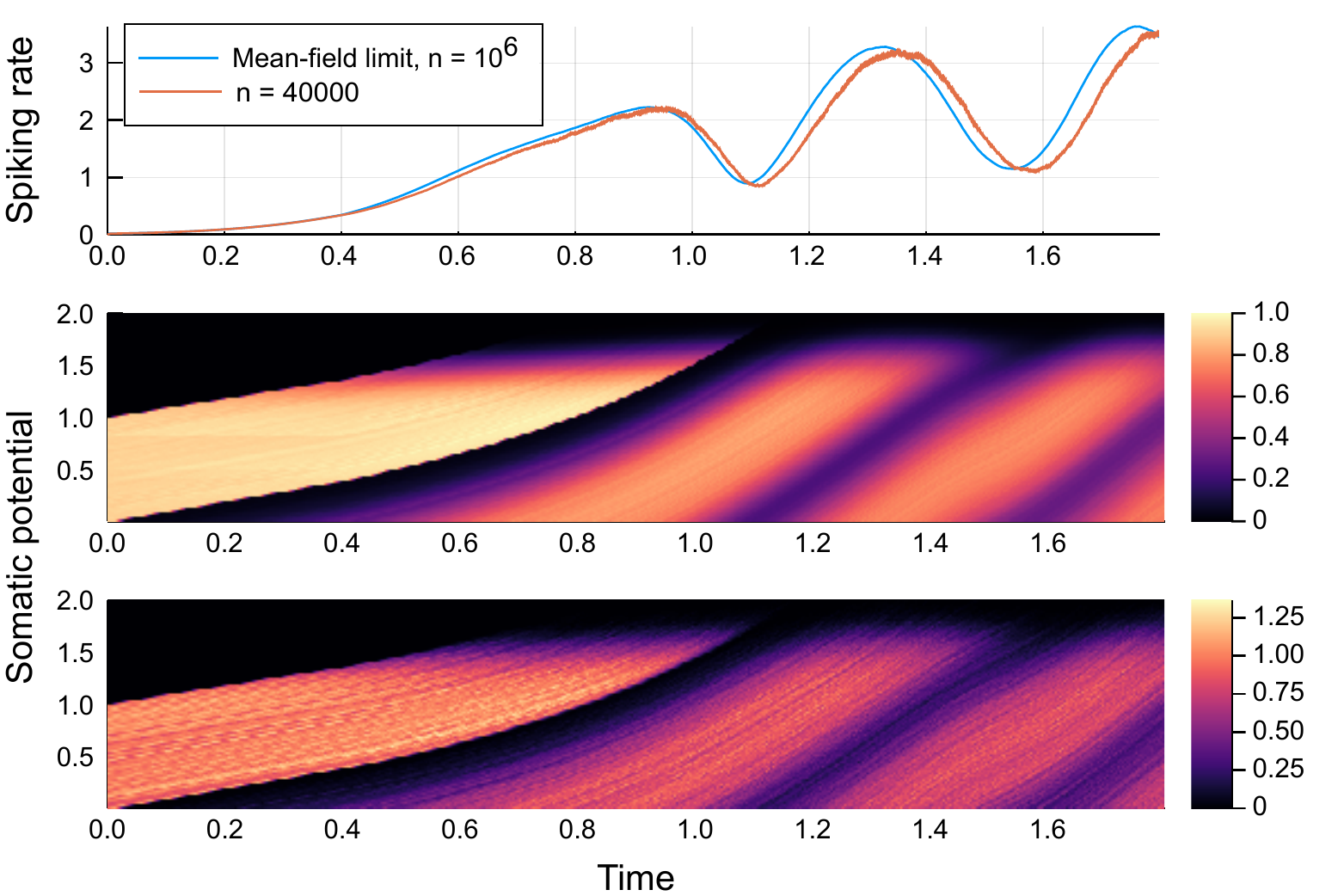} 
\hskip2cm \ref{sim2}.a}\end{minipage}}
\noindent\fbox{\begin{minipage}{0.9\textwidth}
\centerline{\includegraphics[width=0.7\textwidth]{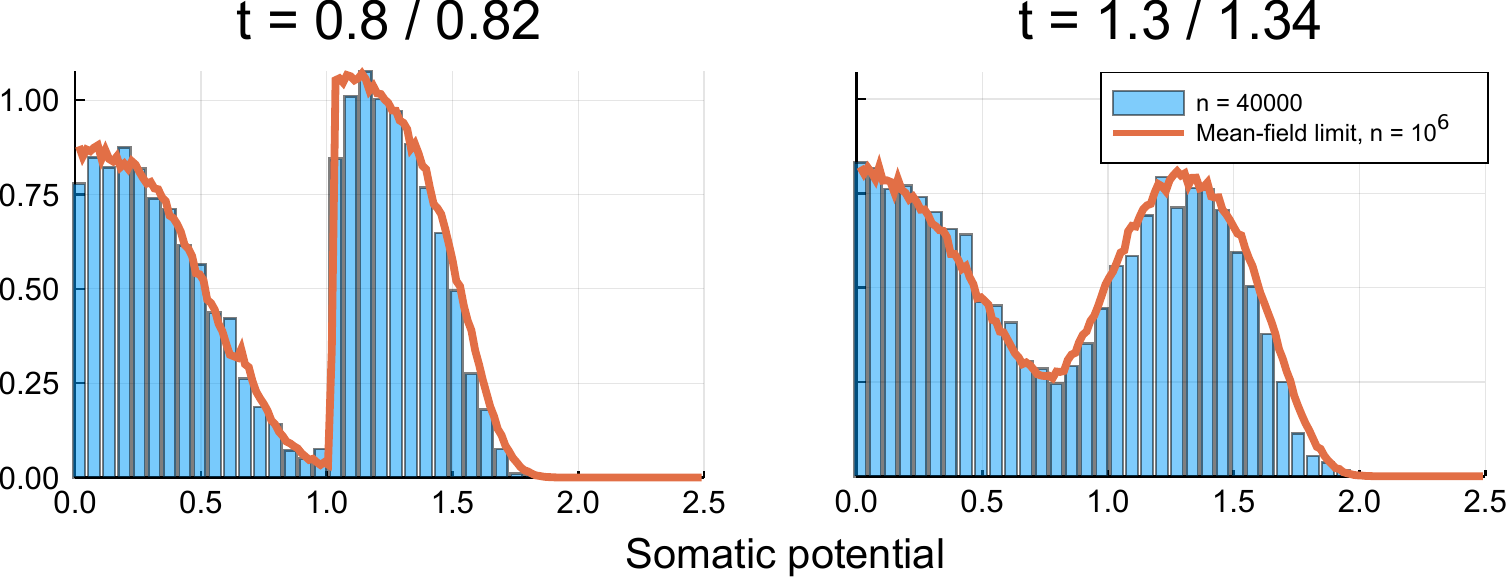} 
\hskip2cm\ref{sim2}.b}\end{minipage}}
\noindent\fbox{\begin{minipage}{0.9\textwidth}
\centerline{\includegraphics[width=0.7\textwidth]{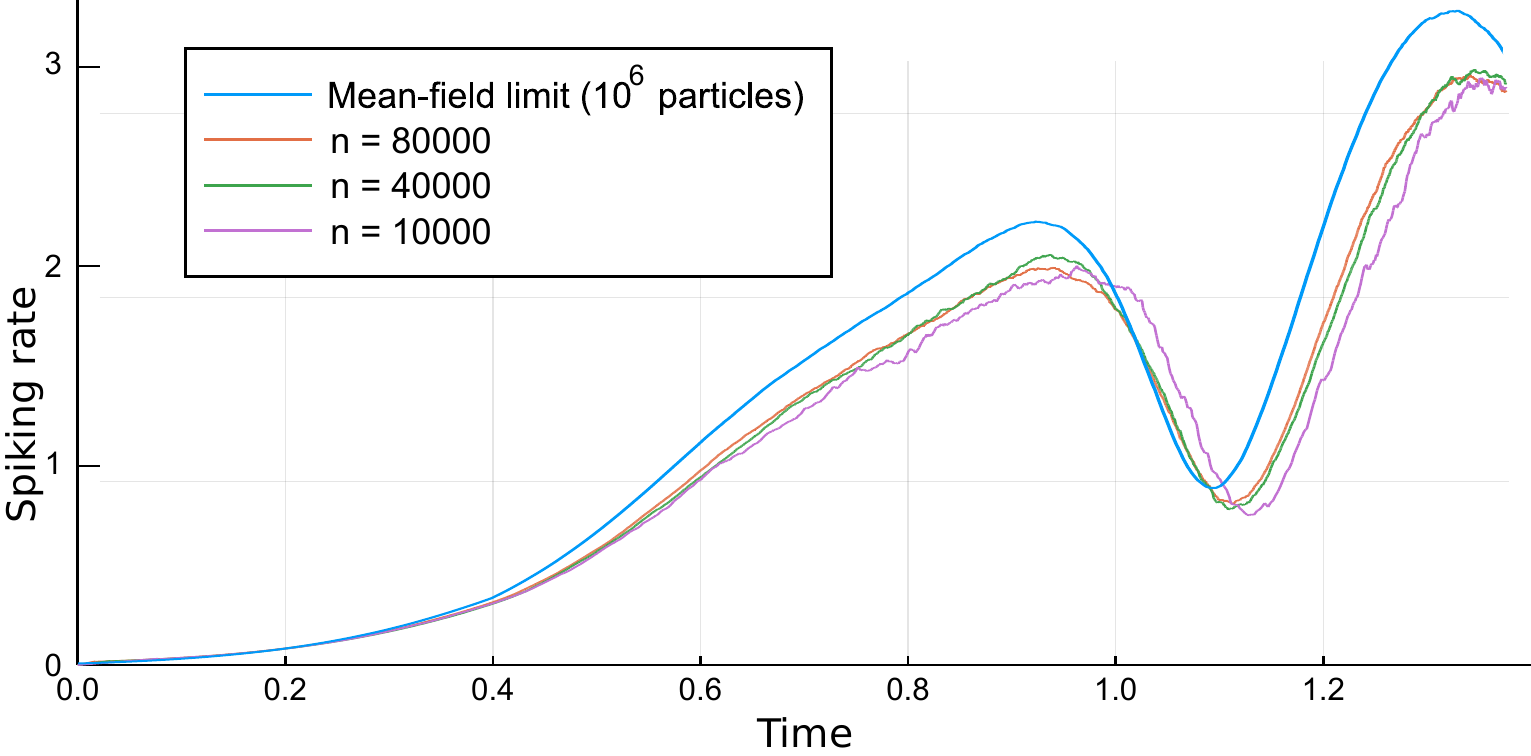}
\hskip2cm \ref{sim2}.c}\end{minipage}}
\caption{\small{Soft model, $\theta\!=\!0.4$, $\lambda(v) \!=\! (v\!-\!0.2)_+^8$, $F(v) \!=\! 1 \!-\! 0.1v$,
{$f_0(v)\!=\indiq_{[0,1]}(v)$}.   \label{sim2}}}
\end{figure}

\subsection{The soft model with another rate function}

Here again, we proceed exactly as in Subsection~\ref{sss1}, with the same parameters 
(in particular $\theta=0$), except that the rate function $\lambda(v)=v^8$ does not satisfy our assumptions,
since $\alpha=\inf\{v\geq v_{min}:\lambda(v)>0\}=v_{min}$ (recall that $v_{min}=0$).
The results are presented in Figure~\ref{sim3} and are not less convincing than those of the previous subsections.
It thus seems that our assumption that $\alpha>v_{min}$ is not necessary.

\begin{figure}[h]
\noindent\fbox{\begin{minipage}{0.9\textwidth}
\centerline{\includegraphics[width=0.7\textwidth]{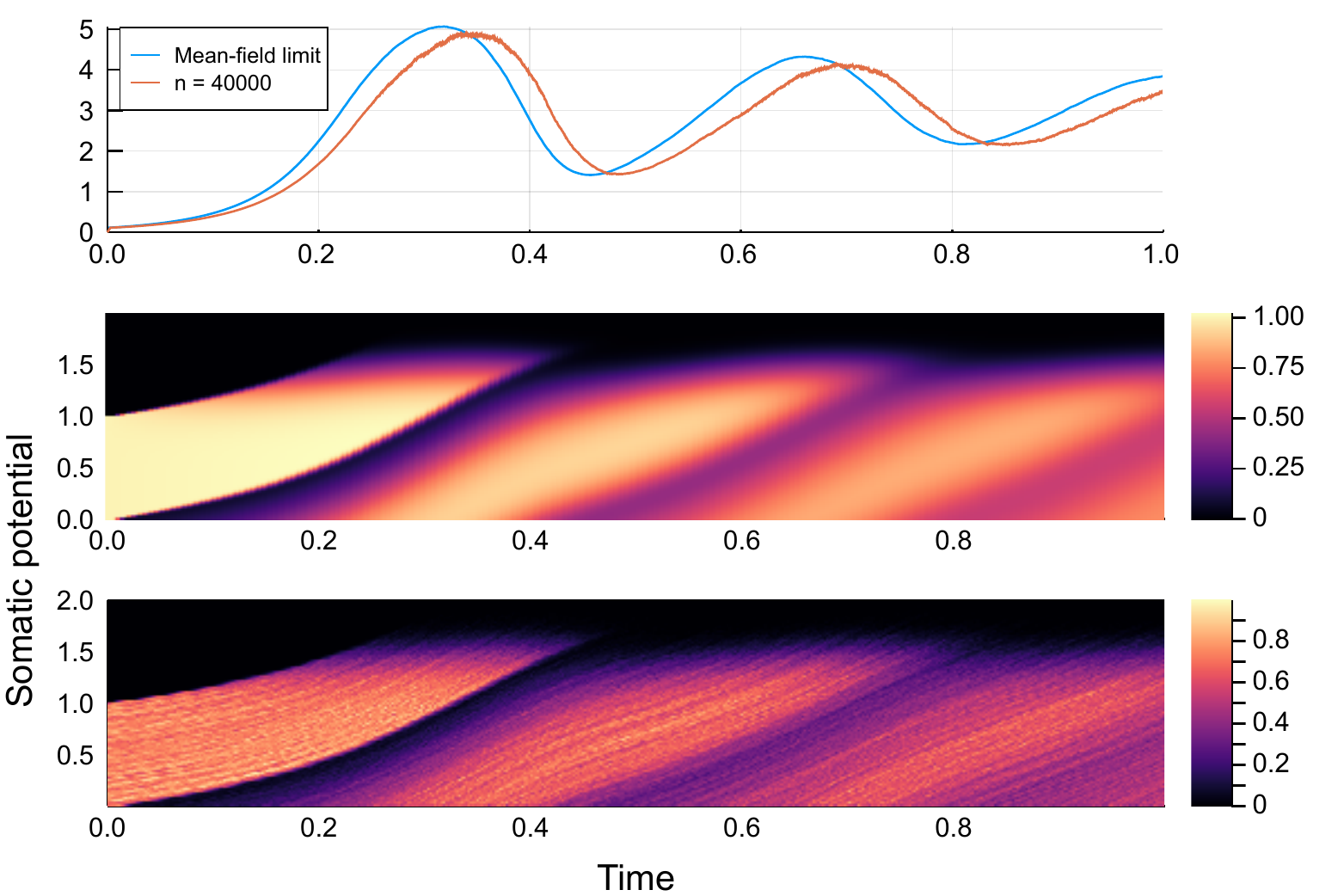} 
\hskip2cm \ref{sim3}.a}\end{minipage}}
\noindent\fbox{\begin{minipage}{0.9\textwidth}
\centerline{\includegraphics[width=0.7\textwidth]{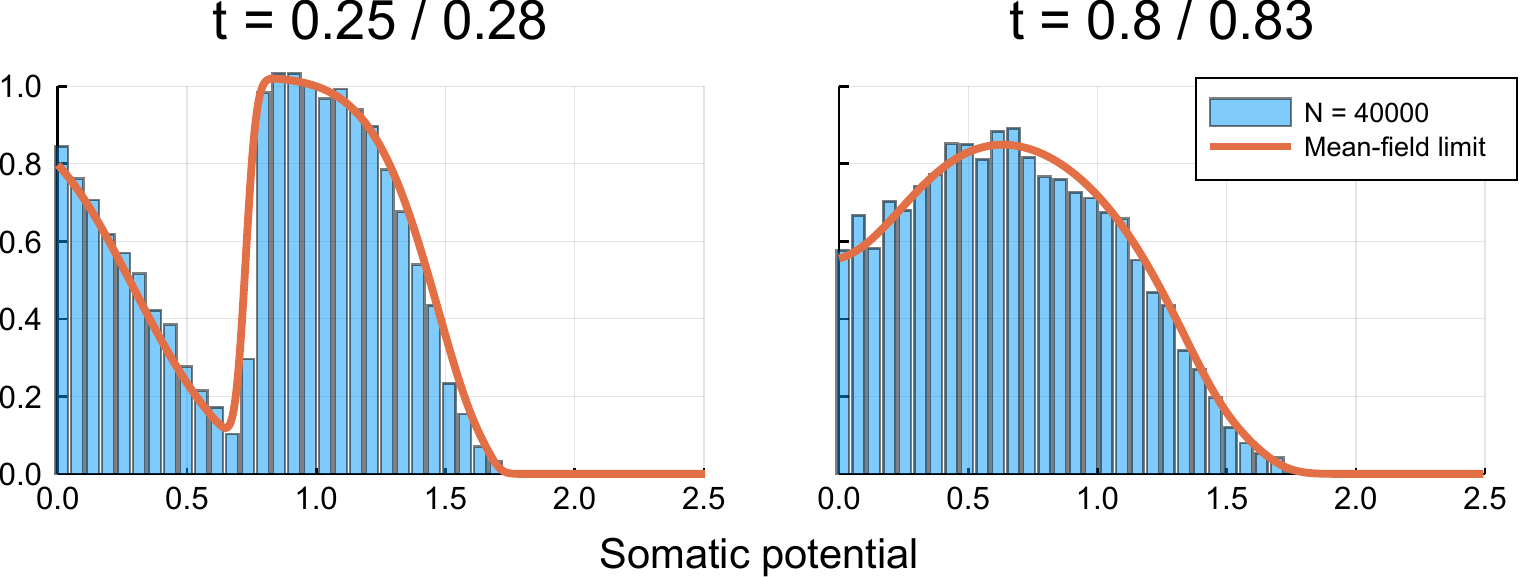} 
\hskip2cm\ref{sim3}.b}\end{minipage}}
\noindent\fbox{\begin{minipage}{0.9\textwidth}
\centerline{\includegraphics[width=0.7\textwidth]{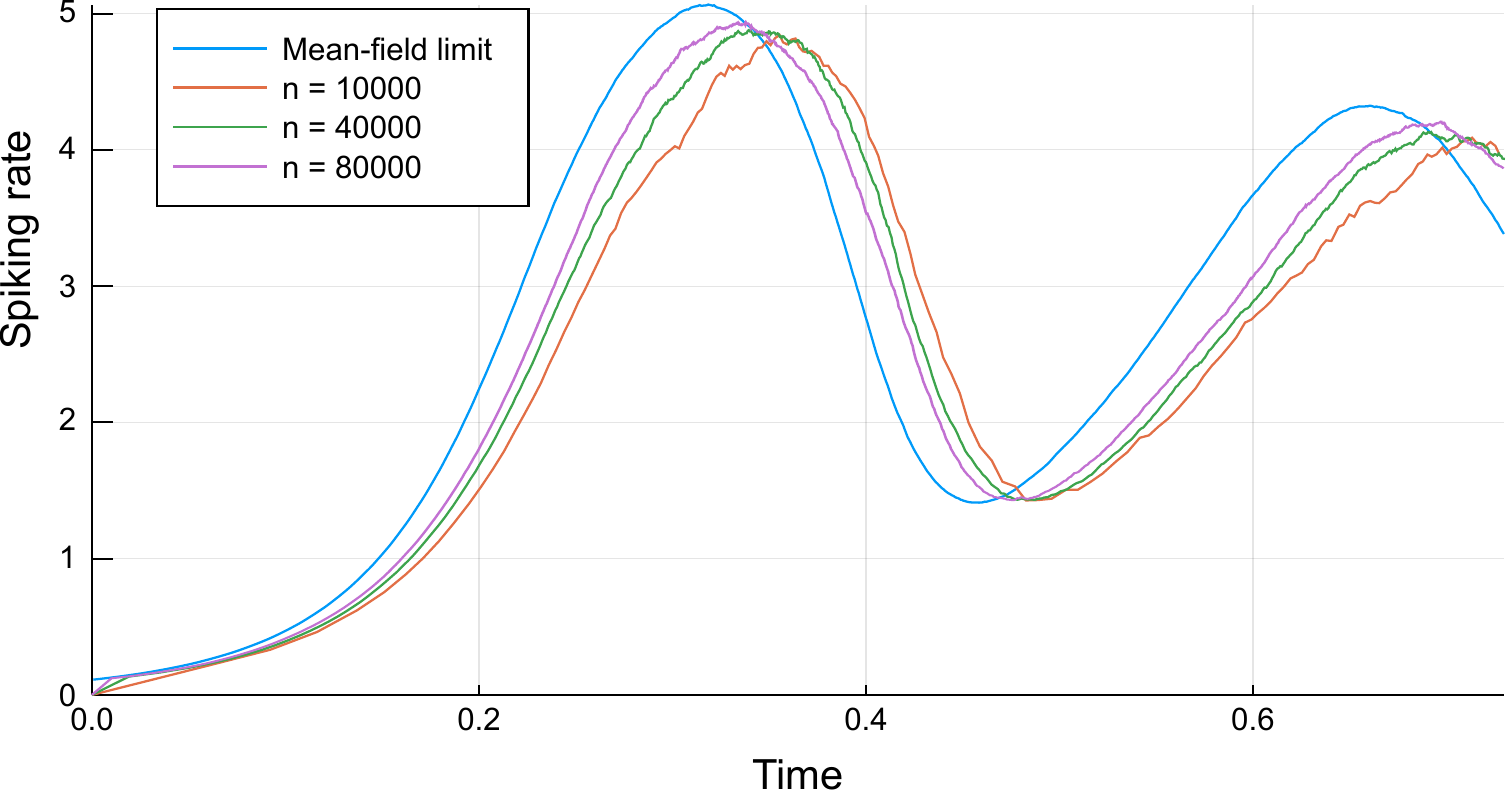}
\hskip2cm \ref{sim3}.c}\end{minipage}}
\caption{\small{Soft model, $\theta\!=\!0$, $\lambda(v) \!=\! v^8$, $F(v) \!=\! 1 \!-\! 0.1v$,
{$f_0(v)\!=\indiq_{[0,1]}(v)$}.   \label{sim3}}}
\end{figure}

\subsection{The hard model}

Concerning the hard model, we did not code the particle system described in Subsection~\ref{mm}.
However, we would like to validate numerically the explicit formula of Theorem~\ref{mr1}.
We consider the following set of parameters: $F(v)=I=0.5$, $\theta=0$, $v_{min}=0$, $v_{max}=1.2$, and
$$f_0(v)=\Big[\frac 1{2v_{max}}+\frac{\pi}{4v_{max}}\sin\frac {\pi v}{v_{max}}\Big]\indiq_{[0,v_{max}]}(v).$$

We compute numerically $(\kappa_t)_{t \geq 0}$ by solving the ODE
$\kappa'_t=G_0(v_{max}-It-\kappa_t)$ (with $\kappa_0=0$), using an Euler scheme, until time $a>0$ such that
$\kappa_a+Ia=v_{max}$ and by using that $\kappa'$ is $a$-periodic, see the proof of Theorem~\ref{mr1}.
On Figure~\ref{sim4}, we plot in red the curve $t\mapsto \kappa'_t$. Recall that
$\kappa'_t$ represents the excitation rate, i.e. the increase of potential of the neurons, during $[t,t+\dd t]$,
due to excitation.

Next, the hard model can be seen as the soft model with the choice
$\lambda(v)=\infty\indiq_{\{v>v_{max}\}}$, that we approximate by $\lambda(v)=\max(v-0.2,0)^{300}$.
{\color{black}
We then the mean-field particle system introduced in Subsection~\ref{sss2}) with $K=200000$ particles,
to approximate numerically
$t\mapsto \E[\lambda(V_t)]$, $(V_t)_{t\geq 0}$ being the solution to the nonlinear SDE \eqref{nsde}.
And we plot, in blue, the approximation of $t\mapsto 2\sqrt{\E[\lambda(V_t)]}$}, which also represents the excitation rate,
since it is the derivative of $t\mapsto \Gamma_t((\E[\lambda(V_s)])_{s\geq 0})=2\intot \sqrt{\E[\lambda(V_s)]}\dd s$, 
see Remark~\ref{ttip} and recall that $H(0)=2$.

The two curves are close to each other and this is rather convincing concerning our explicit formula. 
However the precision is not high, which is not surprising due to the (numerical)
singular behavior of $\lambda$ around $v=v_{max}$.

\begin{figure}[h]
\noindent\fbox{\begin{minipage}{0.9\textwidth}
\centerline{\includegraphics[width=0.9\textwidth]{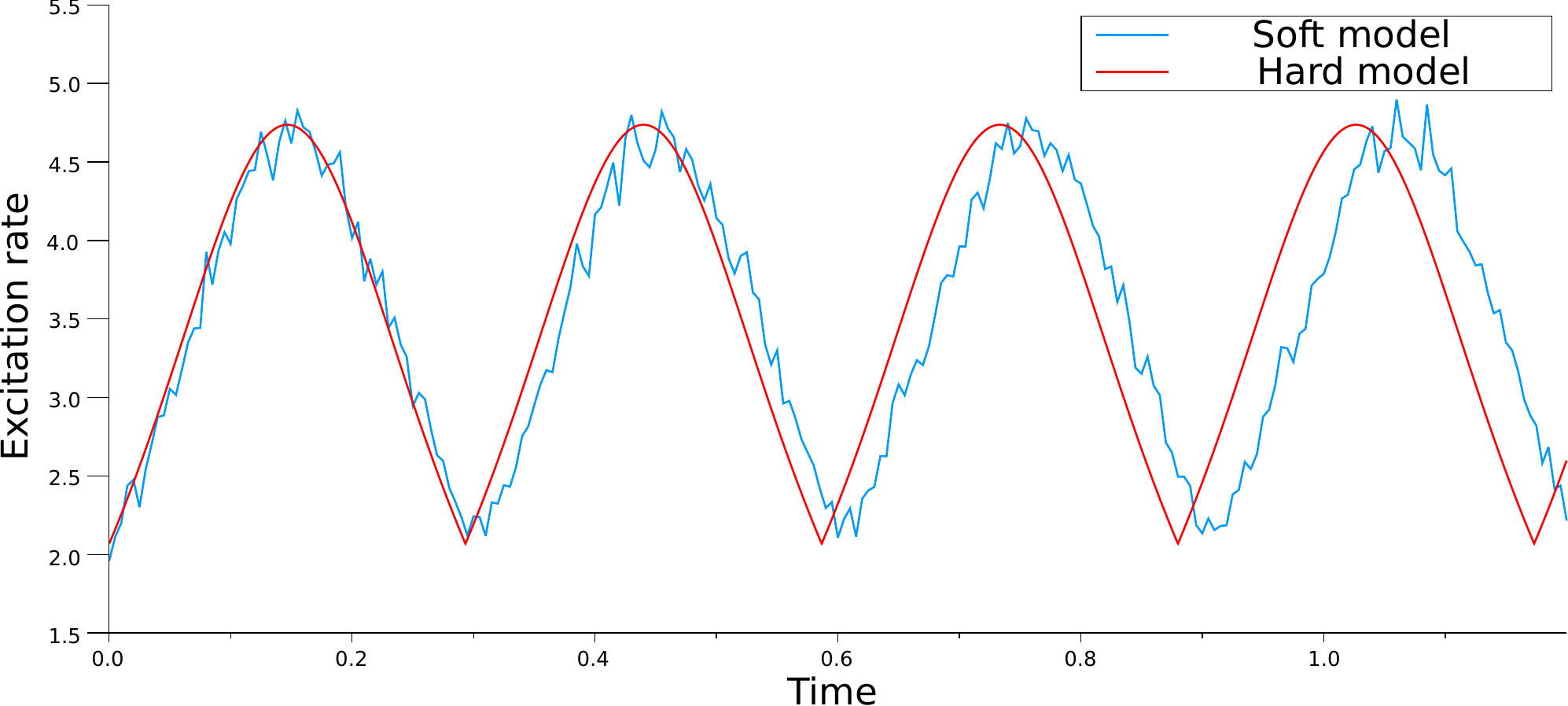}}
\end{minipage}}
\caption{
{\small Soft and hard models with $\theta=0$, $F\equiv 0.5$, $v_{max}=1.2$, 
$\lambda(v) \!=\! (v\!-\!0.2)_+^{300}$}
\centerline{\small{ and
$f_0(v)=[\frac 1{2v_{max}}+\frac{\pi}{4v_{max}}\sin\frac {\pi v}{v_{max}}]{\indiq_{ [0,v_{max}]}(v)}.$}}
\label{sim4}}
\end{figure}

\clearpage

\end{document}